\documentclass[11pt,a4paper]{amsart}

\usepackage{amsmath,amsthm}
\usepackage{amssymb,esint}
\usepackage{amscd}
\usepackage{mathrsfs}
\usepackage{enumerate}
\usepackage{anysize}
\usepackage[english]{babel}
\usepackage[a4paper,top=2cm,bottom=2cm,left=3.5cm,right=3.5cm,marginparwidth=1.75cm]{geometry}


\usepackage{tikz}
\usepackage{subcaption}

\usetikzlibrary{fadings}
\usetikzlibrary{decorations}
\usepgflibrary{decorations.pathmorphing}

\newtheorem{theorem}{Theorem} 
\newtheorem{lemma}[theorem]{Lemma} 

\newtheorem{proposition}[theorem]{Proposition}

\newtheorem{remark}[theorem]{Remark}
\numberwithin{theorem}{section}
\numberwithin{equation}{section}

\def\eps{\varepsilon}

\def\la{\left\lvert}
\def\lA{\left\lVert}
\def\ra{\right\rvert}
\def\rA{\right\rVert}

\def\partialx{\nabla}

\def\a{\alpha}

\def\dalpha{\diff \! \alpha}
\def\dsigma{\diff \! \sigma}
\def\dt{\diff \! t}

\def\dh{\diff \! h}
\def\dx{\diff \! x}
\def\dxi{\diff \! \xi}
\def\dz{\diff \! z}
\def\dy{\diff \! y}
\def\dydx{\diff \! y \diff \! x}
\def\fract{\frac{\diff}{\dt}}

\def\BMO{\rm{BMO}}
\def\D{\la D_x\ra}

\def\slam{a}
\def\Slam{A}

\def\defn{\mathrel{:=}}
\def\eps{\varepsilon}
\def\la{\left\vert}
\def\lA{\left\Vert}
\def\bla{\big\vert}
\def\blA{\big\Vert}
\def\le{\leq}

\def\mez{\frac{1}{2}}
\def\ra{\right\vert}
\def\rA{\right\Vert}
\def\bra{\big\vert}
\def\brA{\big\Vert}
\def\tdm{\frac{3}{2}}
\def\tq{\frac{3}{4}}

\def\xN{\mathbb{N}}
\def\xR{\mathbb{R}}

\def\xT{\mathbb{T}}





\DeclareMathOperator{\cn}{div}
\DeclareMathOperator{\RE}{Re}

\DeclareMathOperator{\diff}{d}

\DeclareMathOperator{\Op}{Op}
\DeclareMathOperator{\cnx}{div}
\DeclareMathOperator{\curl}{curl}
\DeclareMathOperator{\pv}{pv}

\title{Paralinearization of free boundary problems in fluid dynamics}

\author{Thomas Alazard}
\thanks{Thomas Alazard, Universit\'e Paris-Saclay, ENS Paris-Saclay, CNRS, Centre Borelli UMR9010, 
avenue des Sciences, F-91190 Gif-sur-Yvette France, thomas.alazard@ens-paris-saclay.fr.}

\begin{document}

\maketitle

\begin{abstract} 
 A classical topic in the mathematical theory of hydrodynamics is to study the evolution of the free surface 
 separating air from an incompressible perfect fluid. The goal of this survey is to examine 
 this problem for two important sets of equations: the water wave equations and the Hele-Shaw equations, 
 including the Muskat problem. These equations are different in nature, 
dispersive or parabolic, but we will see that they can be studied using related tools. 
In particular, we will discuss a paradifferential approach to these problems.
\end{abstract}

\section{Introduction}

The aim of this survey article is to provide an overview of various results in the mathematical analysis of 
free boundary problems in fluid dynamics. 

The primary objective is to depict the evolution of a free surface, denoted as $\Sigma(t)$, which, 
by simplicity, is assumed to be represented as the graph of a 
function $\eta=\eta(t)$. Additionally, it is assumed that 
$\Sigma(t)$ undergoes transport driven by a fluid occupying the region $\Omega(t)\subset \xR^N$ 
located beneath $\Sigma(t)$. To be more specific, it is assumed that the normal velocity $V_n$ of $\Sigma$ 
obeys to the equation
\begin{equation}\label{I1}
V_n=u\cdot n,
\end{equation}
where $u\colon \Omega\to \xR^N$ is the fluid velocity, and $n$ represents 
the normal vector to the free surface. 

This is a broad formulation that encompasses various fluid dynamics problems. Indeed, numerous equations are associated with this problem due to various factors that influence the dynamics. These factors include the compressibility of the fluid, its irrotational or rotational nature, whether the fluid has a bottom or not, and whether the restoring forces are determined by gravity or surface tension. Furthermore, a range of equations can be derived under different asymptotic conditions. Notable examples include the Boussinesq and Korteweg-de Vries equations, which have been extensively studied 
(see~\cite{SW,ASL,IguchiCPDE,LannesLivre,Saut-IMPA}).

This paper 
primarily focuses on three fundamental models: the water wave problem, the Hele-Shaw equation, and the Muskat equation. These 
models differ in their mathematical nature (dispersive versus parabolic). However, we will show how related mathematical tools can be employed to study them. Specifically, this paper will discuss a paradifferential approach to these problems. In simple terms, 
the goal is to find a method to transform these equations in such a way that they can be reduced to simpler equations.

\subsection{The equations}
To study these equations, we adopt a formulation which involves a set of equations 
reduced to the free surface. This formulation revolves around the displacement function $\eta$ and the trace $\psi$ of the potential flow variable on the free surface. This particular approach has a rich historical background, notably tracing its roots back to the works of Zakharov~\cite{Zakharov}, Craig-Sulem~\cite{CrSu}, and Lannes~\cite{LannesJAMS}, who 
introduced an analysis based in the Dirichlet-to-Neumann 
operator. We adopt this perspective in the following.

Consider a time-dependent domain
\begin{align*}
&\Omega(t)=\{ (x,y) \in \xR^{d}\times \xR\,:\, y < \eta(t,x)\} \subset \xR ^N\qquad (N=d+1,~d\ge 1)
\intertext{and set}
&\Sigma(t)=\partial\Omega(t)=\{ y = \eta(t,x)\}.
\end{align*}
In this case, the normal $n$ and normal velocity $V_n$ are given by
\begin{figure}[h]
\begin{minipage}[b]{0.45\linewidth}
\begin{align*}
n&=\frac{1}{\sqrt{1+|\nabla \eta|^2}}\begin{pmatrix} -\nabla \eta\\ 1\end{pmatrix}\\[1ex]
V_n&=n\cdot \fract \begin{pmatrix} x \\ \eta(t,x)\end{pmatrix}=\frac{\partial_t\eta}{\sqrt{1+|\nabla \eta|^2}}.
\end{align*}
\end{minipage}
\hspace{5mm}
\begin{minipage}[b]{0.40\linewidth}
\begin{tikzpicture}[scale=0.9,samples=100]
\draw [thick,->] (-0.75,-0.3) -- (-0.5,0.5) ; 
\node at (-0.5,0.5) [right] {$n$}; 
\shadedraw [top color=blue!70!white, bottom color=blue!30!white, draw=white] 
(-3,0) -- (-3,-3) -- (1,-3) -- (1,0)  -- 
(1,-0.7) to [out=180,in=50]
(-2,-0.5) to [out=230,in=0] (-3,-1.2) -- (-3,-1) ;
\draw [black] (1,-0.7) to [out=180,in=50]
(-2,-0.5) to [out=230,in=0] (-3,-1.2) ;
\node at (0.5,0) [below] {$\Sigma(t)$};
\node at (-0.9,-1.7) {$\Omega(t)$};
\end{tikzpicture}
\end{minipage}
\end{figure}

Then, the kinematic equation
$V_n=u\cdot n$ (see~\eqref{I1}) simplifies to
\begin{equation}\label{N111}
\partial_t\eta=\sqrt{1+|\nabla \eta|^2}\, u\cdot n.
\end{equation}
Assume now that $u$ is irrotational and incompressible, that is assume that
$$
\curl_{x,y}u=0\quad,\quad \cnx_{x,y}u=0.
$$
Then $u=\nabla_{x,y}\phi$ for some potential $\phi\colon \Omega\to \xR$ such that 
$\Delta_{x,y} \phi = 0$. Now, following 
Zakharov~\cite{Zakharov1968,Zakharov}, set
$$
\psi(t,x)=\phi(t,x,\eta(t,x))
$$
and introduce, following Craig-Sulem \cite{CrSu} and Lannes~\cite{LannesJAMS,LannesLivre}, 
the Dirichlet-to-Neumann operator by 
$$
G(\eta)\psi =\partial_y \phi-\partialx \eta \cdot \partialx \phi \Big\arrowvert _{y=\eta}
=\sqrt{1+|\nabla\eta|^2}\, \partial_n\phi\Big\arrowvert _{y=\eta}.
$$
It follows from this definition and the kinematic equation~\eqref{N111} that 
\begin{equation}\label{N2}
\partial_t\eta=\sqrt{1+|\nabla \eta|^2}\, (u\cdot n)\arrowvert _{y=\eta}=\sqrt{1+|\nabla \eta|^2}\, \partial_n\phi \arrowvert _{y=\eta}=G(\eta)\psi.
\end{equation}

To derive a closed set of equations, we also require an evolution equation for $\psi$. 
The simplest way 
to obtain such an equation is to postulate that :
$$
\psi=-g\eta,
$$
where $g$ is the acceleration of gravity. 
Consequently, it follows from equation \eqref{N2} that
\begin{equation}\label{N3}
\partial_t\eta+gG(\eta)\eta=0.
\end{equation}
This equation has physical significance and is commonly referred to as the Hele-Shaw equation (as seen in references such as~\cite{ChangLaraGuillenSchwab,AMS,Nguyen-Pausader}). It corresponds to the physical scenario where the fluid velocity and pressure are related through Darcy's law, which states that:
$$
u=-\nabla_{x,y}(P+gy).
$$

Alternatively, one can consider a slightly more complex equation in which $\psi$ 
is determined by the solution of an equation of the form:
$$
\psi+K(\eta)\psi=-\eta.
$$
This case encompasses the significant Muskat equation, which corresponds to a Hele-Shaw problem with two phases (see~\cite{Nguyen-Pausader}).

Out of the numerous conceivable relationships between $\eta$ and $\psi$, 
the one that undoubtedly stands out as the most elegant is the discovery made by Vladimir Zakharov.

\begin{theorem}[{Zakharov} \cite{Zakharov1968}] Consider an  irrotational, incompressible velocity field 
$u=\nabla_{x,y}\phi$ satisfying 
$\partial_tu+u\cdot\nabla_{x,y}u=-\nabla_{x,y}(P+gy)$ and $P\arrowvert_{y=\eta}=0$.
Then $\eta$ and $\psi$ are conjugated: 
\begin{equation*}
\left\{
\begin{aligned}
\frac{\partial\eta}{\partial t}&=\phantom{-}\frac{\delta \mathcal{H}}{\delta \psi}\\
\frac{\partial\psi}{\partial t}&=-\frac{\delta \mathcal{H}}{\delta \eta}
\end{aligned}
\right.
\quad\text{where}\quad
\left\{
\begin{aligned}
&\psi(t,x)=\phi(t,x,\eta(t,x))\\[2ex]
&\mathcal{H}=\mez \int_{\xR^d} \psi G(\eta)\psi \dx + \frac{g}{2}\int_{\xR^d} \eta^2 \dx.
\end{aligned}
\right.
\end{equation*}
\end{theorem}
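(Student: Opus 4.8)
The plan is to derive the two Hamiltonian equations directly from the equations of motion, the first being essentially already established and the second requiring Bernoulli's equation together with a careful computation of the variational derivative of the Dirichlet energy. The starting point is the observation that the first equation, $\partial_t\eta = \delta\mathcal H/\delta\psi$, is equivalent to $\partial_t\eta = G(\eta)\psi$: indeed, since $\mathcal H$ is quadratic in $\psi$ with $G(\eta)$ self-adjoint (being, up to the factor $\sqrt{1+|\nabla\eta|^2}$, a Dirichlet-to-Neumann operator for a symmetric bilinear form), one has $\delta\mathcal H/\delta\psi = G(\eta)\psi$, and this identity is exactly \eqref{N2}. So the first equation is free.

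For the second equation, the first step is to write Bernoulli's law. Since $u=\nabla_{x,y}\phi$ is irrotational and incompressible and satisfies the Euler equation $\partial_t u + u\cdot\nabla_{x,y}u = -\nabla_{x,y}(P+gy)$, integrating in the spatial variables gives $\partial_t\phi + \tfrac12|\nabla_{x,y}\phi|^2 + P + gy = 0$ inside $\Omega$. Evaluating at $y=\eta$ and using $P|_{y=\eta}=0$ yields the boundary relation $(\partial_t\phi)|_{y=\eta} + \tfrac12|\nabla_{x,y}\phi|^2|_{y=\eta} + g\eta = 0$. The second step is to relate $(\partial_t\phi)|_{y=\eta}$ to $\partial_t\psi$ via the chain rule: differentiating $\psi(t,x)=\phi(t,x,\eta(t,x))$ gives $\partial_t\psi = (\partial_t\phi)|_{y=\eta} + (\partial_y\phi)|_{y=\eta}\,\partial_t\eta$. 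Likewise $\nabla\psi = (\nabla_x\phi)|_{y=\eta} + (\partial_y\phi)|_{y=\eta}\nabla\eta$. The third step is the algebraic identity expressing $\tfrac12|\nabla_{x,y}\phi|^2|_{y=\eta}$ and $(\partial_y\phi)|_{y=\eta}$ in terms of the surface quantities $\nabla\psi$, $\nabla\eta$, and $G(\eta)\psi$: writing $B=(\partial_y\phi)|_{y=\eta}$ and $V=(\nabla_x\phi)|_{y=\eta}$, one has $B = \dfrac{G(\eta)\psi + \nabla\eta\cdot\nabla\psi}{1+|\nabla\eta|^2}$ and $V = \nabla\psi - B\nabla\eta$, whence a short computation gives $\tfrac12|V|^2 + \tfrac12 B^2 = \tfrac12|\nabla\psi|^2 - \tfrac12\dfrac{(G(\eta)\psi + \nabla\eta\cdot\nabla\psi)^2}{1+|\nabla\eta|^2}$. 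Combining these three steps produces
\begin{equation*}
\partial_t\psi + g\eta + \tfrac12|\nabla\psi|^2 - \tfrac12\frac{\big(G(\eta)\psi + \nabla\eta\cdot\nabla\psi\big)^2}{1+|\nabla\eta|^2} = 0.
\end{equation*}

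The final and most delicate step is to identify the left-hand side (minus $\partial_t\psi$) with $\delta\mathcal H/\delta\eta$, i.e.\ to show that
\begin{equation*}
\frac{\delta}{\delta\eta}\Big(\mez\int \psi\,G(\eta)\psi\,\dx\Big) = -\mez|\nabla\psi|^2 + \mez\frac{\big(G(\eta)\psi + \nabla\eta\cdot\nabla\psi\big)^2}{1+|\nabla\eta|^2}.
\end{equation*}
This is the shape derivative of the Dirichlet energy $\mathcal E(\eta) = \mez\int_{\Omega}|\nabla_{x,y}\phi|^2\,\dx\dy = \mez\int\psi\,G(\eta)\psi\,\dx$ with respect to a perturbation of the domain $y<\eta$. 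I expect this to be the main obstacle, and the plan is to compute it by a Hadamard-type variation: perturb $\eta\mapsto\eta+\epsilon\dot\eta$, let $\dot\phi$ be the corresponding variation of the harmonic extension, and use that $\phi$ is harmonic together with the boundary condition to integrate by parts; the bulk term involving $\dot\phi$ drops out by harmonicity (the variation of $\psi$ contributes $\int\dot\psi\,G(\eta)\psi$, which must be handled by noting $\psi$ itself is held fixed only after re-expressing — one actually varies $\eta$ with $\psi$ fixed), leaving a boundary integral that evaluates to $\int \dot\eta\big(-\mez|\nabla\psi|^2 + \mez\frac{(G(\eta)\psi+\nabla\eta\cdot\nabla\psi)^2}{1+|\nabla\eta|^2}\big)\dx$ by the same algebra as in step three. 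Once this shape-derivative formula is in hand, the second Hamiltonian equation follows by matching terms, completing the proof.
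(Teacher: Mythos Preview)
The paper does not actually prove this theorem; it is stated as a classical result attributed to Zakharov, and immediately afterwards the paper simply displays the equivalent ``popular form''~\eqref{N4}, which is precisely the system you derive via Bernoulli. So there is no proof in the paper to compare against. Your outline is the standard derivation and is essentially correct.

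Two small slips. First, the algebraic identity you write in step three, $\tfrac12|V|^2+\tfrac12 B^2=\tfrac12|\nabla\psi|^2-\tfrac12(G(\eta)\psi+\nabla\eta\cdot\nabla\psi)^2/(1+|\nabla\eta|^2)$, is false as stated (check the case $\eta=0$, where the left side is $\tfrac12|\nabla\psi|^2+\tfrac12(G(0)\psi)^2$ and the right side is $\tfrac12|\nabla\psi|^2-\tfrac12(G(0)\psi)^2$). The correct identity emerges only after you also incorporate the term $B\,\partial_t\eta=B\,G(\eta)\psi$ coming from the chain rule in step two. Since your final displayed equation for $\partial_t\psi$ is correct, this is purely a bookkeeping error. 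Second, the sign in your shape-derivative formula is reversed: one has
\[
\frac{\delta}{\delta\eta}\Big(\tfrac12\int\psi\,G(\eta)\psi\,\dx\Big)=\tfrac12|\nabla\psi|^2-\tfrac12\,\frac{(G(\eta)\psi+\nabla\eta\cdot\nabla\psi)^2}{1+|\nabla\eta|^2},
\]
which is what makes $\partial_t\psi=-\delta\mathcal H/\delta\eta$ match your evolution equation.

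For the Hadamard-type computation you flag as the main obstacle, the paper in fact supplies exactly the tool you need: Proposition~\ref{P:shape} (Lannes' shape derivative formula) gives $\frac{d}{d\epsilon}G(\eta+\epsilon\zeta)\psi\big|_{\epsilon=0}=-G(\eta)(B\zeta)-\cnx(V\zeta)$. Pairing against $\tfrac12\psi$, using the self-adjointness of $G(\eta)$ and integrating by parts, yields $\tfrac12\int\zeta\big(-B\,G(\eta)\psi+V\cdot\nabla\psi\big)\dx$; substituting $V=\nabla\psi-B\nabla\eta$ and $B(1+|\nabla\eta|^2)=G(\eta)\psi+\nabla\eta\cdot\nabla\psi$ identifies the density with the expression above. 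So the ``delicate step'' is already packaged as a stated result in the paper.
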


A popular form of the equations is the following :
\begin{equation}\label{N4}
\left\{
\begin{aligned}
&\partial_{t}\eta-G(\eta)\psi=0,\\
&\partial_{t}\psi+g \eta+ \frac{1}{2}\la\partialx \psi\ra^2  -\frac{1}{2}
\frac{\bigl(\partialx  \eta\cdot\partialx \psi +G(\eta) \psi \bigr)^2}{1+|\partialx  \eta|^2}
= 0.
\end{aligned}
\right.
\end{equation}
\begin{remark}
$i)$ It is proved in \cite{Bertinoro} that this system is equivalent to the incompressible 
Euler equations with free surface.

$ii)$ {Brenier} (\cite{brenier-hidden-lectures}) related the Hele-Shaw~\eqref{N3} and water-wave equations~\eqref{N4} by a quadratic change of time.

$iii)$ One can consider more general hamiltonians to take into account surface tension effect or 
rotational flows (see \cite{MR3869702,MR4088976} and references therein).
\end{remark}

A key feature of this problem is that~\eqref{N4} is not a system of PDEs (indeed, $G(\eta)$ is a nonlocal operator). 
Other difficulties present themselves: the equations are fully nonlinear (instead of semi-linear, see the thesis by Ayman Rimah Said \cite{Ayman2}). Eventually, 
the Hamiltonian does not control the dynamics.

\section{The Dirichlet-to-Neumann operator}
One can define 
the Dirichlet-to-Neumann operator 
for rough domains. For instance,  
Arendt and ter Elst (\cite{MR2823661}) define it for bounded connected open set $\Omega\subset\xR^N$ 
whose boundary has a finite $(N-1)$-dimensional Hausdorff measure (see also~\cite{taylor2020DtN}). 
Here, we will consider smoother functions. 
In broad terms, we aim to identify a level of regularity beyond which we can effectively transform the equations, enabling us to conjugate them to simpler counterparts. 
We begin by discussing classical results that hold for Lipschitz free surface. 
Subsequently, we will consider free surfaces with slightly higher regularity, specifically those belonging to the class $C^{1,\alpha}$ in 
terms of Sobolev embedding.

\subsection{The Lipschitz threshold of regularity}

Assume $\eta\in W^{1,\infty}(\xR^d)$ 
and $\psi \in H^{\mez}(\xR^d)$. 
Then it follows from classical arguments from functional analysis (Poincar\'e type inequalities and 
the Lax-Milgram theorem, see \cite{ABZ1,ABZ3,ABZ4}) that there is a unique variational solution 
$$
\phi \in L^2\big(\dx\dy /(1+|y|)^2\big)\quad,\quad \nabla_{x,y}\phi \in L^2(\Omega) 
$$
to
\begin{equation}\label{def:phi}
\Delta_{x,y} \phi = 0 \quad\text{in } \{(x,y)\in\xR^d\times \xR\,:\, y<\eta(x)\}, 
\qquad \phi\big\arrowvert_{y=\eta}= \psi.
\end{equation}
Given that $\phi$ is harmonic, it is possible to express the normal 
derivative in terms of the tangential 
derivatives. By doing so, we verify that its normal derivative on the free surface 
is well-defined 
as an element of $H^{-\mez}(\xR^d)$. Therefore, one can define 
$$
G(\eta)\psi =\sqrt{1+|\nabla\eta|^2}\,
\partial_{n}\phi\big\arrowvert _{y=\eta}\in H^{-\mez}(\xR^d),
$$
and it follows that
$$
\lA G(\eta)\rA_{H^\mez\to H^{-\mez}}\le C\big( \lA \nabla \eta\rA_{L^\infty}\big).
$$
Moreover, a famous inequality of Rellich 
shows that $G(\eta)$ is also well-defined on $H^1(\xR^d)$ 
and that one has
\begin{equation}\label{Rellich-original}
\lA G(\eta)\rA_{H^1\to L^2}\le C\big( \lA \nabla \eta\rA_{L^\infty}\big).
\end{equation}

The first inequality of the form \eqref{Rellich-original} was established through an integration by parts argument by Rellich \cite{MR0002456}. Rellich's goal was to investigate the eigenvalues of the Laplacian within star-shaped domains. This inequality plays a pivotal role in numerous works  pertaining to elliptic partial differential equations. Notably, such inequalities were used by Jerison and Kenig to study the Dirichlet problem in Lipschitz domains \cite{JeKe81Dirichlet} 
and by Verchota \cite{Verchota84} in his famous work on layer potentials. 
See the books~\cite{McLean,Necas} for many other applications. 

Below, we present a concise (formal) proof of a refined version of this inequality, which follows from Zakharov's Hamiltonian formulation and Noether's theorem.

\begin{proposition}[Noether's theorem implies Rellich inequality]
For any $d\ge 1$, 
$$
\int_{\xR^d} \bla \nabla_{x,y}\phi  \arrowvert_{y=\eta}\bra^2\dx
\le 5 \int_{\xR^d}|\nabla \psi|^2\dx.
$$
In particular
$$
\lA G(\eta)\rA_{H^1\to L^2}\le 4+4\lA \nabla \eta\rA_{L^\infty}.
$$
\end{proposition}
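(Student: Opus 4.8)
The plan is to exploit the Hamiltonian structure highlighted just above: by Zakharov's theorem the pair $(\eta,\psi)$ is a Hamiltonian system whose Hamiltonian is the physical energy $\mathcal H = \tfrac12\int \psi G(\eta)\psi \dx + \tfrac g2\int \eta^2\dx$. The functional $\tfrac12\int \psi G(\eta)\psi\dx = \tfrac12\int_{\Omega}|\nabla_{x,y}\phi|^2\dxdy$ is the Dirichlet energy. The key observation is that this Dirichlet integral, viewed as a functional of $\psi$ for fixed $\eta$, is invariant under a one‑parameter family of symmetries — essentially translations in $x$ combined with an appropriate rescaling, or, more directly, one tracks how $\int_{\Omega}|\nabla_{x,y}\phi|^2$ transforms under the vertical dilation $y\mapsto \lambda y$ (adjusted so that the domain $\{y<\eta\}$ is preserved). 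Noether's theorem for this invariance produces a conserved quantity whose vanishing is exactly a pointwise identity relating $|\nabla_{x,y}\phi|_{y=\eta}|^2$ to $|\nabla\psi|^2$ after integration over $\xR^d$; this is the Rellich identity in disguise.

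Concretely, I would proceed as follows. First, recall the elementary decomposition of $\nabla_{x,y}\phi$ on the surface into its tangential and normal parts: writing $\partial_n\phi$ for the normal derivative and using that $\nabla\psi = \nabla(\phi|_{y=\eta}) = (\nabla_x\phi + \nabla\eta\,\partial_y\phi)|_{y=\eta}$, one gets
\begin{equation*}
\bla \nabla_{x,y}\phi\arrowvert_{y=\eta}\bra^2 = \frac{|\nabla\psi|^2 + \big(G(\eta)\psi\big)^2\,(1+|\nabla\eta|^2)^{-1} \cdot(\text{something})}{\cdots},
\end{equation*}
more precisely the clean identity $|\nabla_{x,y}\phi|^2 = |\nabla_T\phi|^2 + (\partial_n\phi)^2$ where $\nabla_T\phi$ is the tangential gradient and $|\sqrt{1+|\nabla\eta|^2}\,\partial_n\phi| = |G(\eta)\psi|$. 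Second, I would write the Rellich-type integration by parts: multiply $\Delta_{x,y}\phi = 0$ by a well-chosen vector field (the classical choice is $X = (x,y)$ for star‑shaped domains, but here one wants the purely vertical field $e_{N} = (0,\dots,0,1)$ or $y e_N$, so that the boundary integral on $\Sigma$ is the only surviving term and the bottom contribution vanishes because $\nabla_{x,y}\phi\in L^2(\Omega)$). Integrating $\partial_y(|\nabla_{x,y}\phi|^2)$ over $\Omega$, or rather $\cnx_{x,y}\big( |\nabla_{x,y}\phi|^2 e_N - 2\partial_y\phi\,\nabla_{x,y}\phi\big) = 0$, and taking the flux through $\Sigma$ gives an identity of the form $\int_{\xR^d}\big(|\nabla_{x,y}\phi|^2 - 2(\partial_y\phi)(\partial_n\phi)\sqrt{1+|\nabla\eta|^2}\big)\dx = 0$. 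Third, I would estimate the cross term $2\int (\partial_y\phi)(\partial_n\phi)\sqrt{1+|\nabla\eta|^2}\dx = 2\int (\partial_y\phi)\,G(\eta)\psi\dx$ by Cauchy–Schwarz, bounding $\|\partial_y\phi|_{y=\eta}\|_{L^2}$ by $\|\nabla_{x,y}\phi|_{y=\eta}\|_{L^2}$ and absorbing, which after a short computation with the algebraic surface identities yields the constant $5$ on the right; finally, from $|\nabla_{x,y}\phi|^2 = |\nabla_T\phi|^2 + (\partial_n\phi)^2 \ge (\partial_n\phi)^2$ and $G(\eta)\psi = \sqrt{1+|\nabla\eta|^2}\,\partial_n\phi$ one gets $\|G(\eta)\psi\|_{L^2}^2 \le (1+\|\nabla\eta\|_{L^\infty}^2)\int(\partial_n\phi)^2\dx \le 5(1+\|\nabla\eta\|_{L^\infty}^2)\|\nabla\psi\|_{L^2}^2$, and taking square roots with $\sqrt{5(1+t^2)}\le 4+4t$ gives the stated operator norm bound $\lA G(\eta)\rA_{H^1\to L^2}\le 4+4\lA\nabla\eta\rA_{L^\infty}$.

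The main obstacle — and the reason the proof is only "formal" as the author warns — is justifying the integration by parts on the unbounded, merely Lipschitz domain $\Omega$: the vector field identity $\cnx_{x,y}\big(|\nabla_{x,y}\phi|^2 e_N - 2\partial_y\phi\,\nabla_{x,y}\phi\big)=0$ requires $\phi\in H^2_{\loc}$, which is not guaranteed when $\eta$ is only Lipschitz, and one must also control the behaviour at spatial infinity and at the (fictitious) bottom $y\to-\infty$ to discard the flux there, using only $\nabla_{x,y}\phi\in L^2(\Omega)$ together with the decay built into the variational solution class $\phi\in L^2(\dxdy/(1+|y|)^2)$. The clean way around this is to first prove the identity for smooth, compactly-supported-in-$x$ data and domains, where the Noether/Rellich computation is rigorous, and then pass to the limit by the density and continuity statements recorded before the proposition (the bounds $\lA G(\eta)\rA_{H^\mez\to H^{-\mez}}\le C(\lA\nabla\eta\rA_{L^\infty})$ and the Rellich estimate \eqref{Rellich-original}); alternatively, one regularises $\eta$ by mollification, proves the identity for $\eta_\eps$, and checks that both sides converge, the left side using continuous dependence of $\nabla_{x,y}\phi|_{y=\eta_\eps}$ on $\eta_\eps$ in $L^2$. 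Everything else is bookkeeping with the two pointwise surface identities $|\nabla_{x,y}\phi|^2 = |\nabla_T\phi|^2+(\partial_n\phi)^2$ and $|G(\eta)\psi|=\sqrt{1+|\nabla\eta|^2}\,|\partial_n\phi|$ and a single application of Cauchy–Schwarz.
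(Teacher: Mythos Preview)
Your Rellich-multiplier approach with the vertical field $e_N$ is essentially the same argument as the paper's, just with different packaging. The paper motivates the key identity via Noether's theorem (conservation of $\int\psi$ in the water-wave flow, combined with the Bernoulli equation for $\partial_t\psi$), but it explicitly remarks that ``the previous identity can be rigorously derived from the divergence theorem''---which is exactly your computation $\cnx_{x,y}\big(|\nabla_{x,y}\phi|^2 e_N - 2\partial_y\phi\,\nabla_{x,y}\phi\big)=0$. Both routes land on the same identity, namely~\eqref{N200}:
\[
\int_{\xR^d}(1+|\nabla\eta|^2)\big((\partial_y\phi)\arrowvert_{y=\eta}\big)^2\dx=\int_{\xR^d}|\nabla\psi|^2\dx.
\]

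There is, however, a concrete gap in your Step~3. You propose to ``estimate the cross term $2\int(\partial_y\phi)G(\eta)\psi\dx$ by Cauchy--Schwarz, bounding $\|\partial_y\phi\|_{L^2}$ by $\|\nabla_{x,y}\phi\|_{L^2}$ and absorbing''. That manoeuvre only yields $\|\nabla_{x,y}\phi\arrowvert_{y=\eta}\|_{L^2}\le 2\|G(\eta)\psi\|_{L^2}$, which is the wrong direction: it controls the boundary gradient by $G(\eta)\psi$, not by $\nabla\psi$. The correct move is not to bound the cross term but to \emph{expand} both sides of your Rellich identity using the chain-rule relations $(\partial_y\phi)\arrowvert_{y=\eta}=\frac{G(\eta)\psi+\nabla\eta\cdot\nabla\psi}{1+|\nabla\eta|^2}$ and $(\nabla_x\phi)\arrowvert_{y=\eta}=\nabla\psi-(\partial_y\phi)\nabla\eta$. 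The mixed terms $-2\int\partial_y\phi\,\nabla\eta\cdot\nabla\psi$ cancel exactly, and what remains is precisely~\eqref{N200}. From there the constant~$5$ drops out by a single application of $|a-b|^2\le 2|a|^2+2|b|^2$ to $|\nabla_x\phi|^2=|\nabla\psi-(\partial_y\phi)\nabla\eta|^2$: one gets $\int|\nabla_x\phi|^2\le 2\int|\nabla\psi|^2+2\int|\nabla\eta|^2(\partial_y\phi)^2\le 4\int|\nabla\psi|^2$, and adding $\int(\partial_y\phi)^2\le\int|\nabla\psi|^2$ gives $5$. Your final deduction of the operator-norm bound is fine, and your discussion of the justification issues (Lipschitz domain, decay at infinity, regularisation) is accurate and matches the paper's ``formal'' caveat.
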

\begin{proof}This generalizes an inequality proved in \cite{MR4556204} 
for the case $d=1$ using the multiplier method. Here we give a short (formal) proof which relies only on 
Noether's theorem.

It follows from the Hamiltonian formulation of the problem and its symmetries that
$$
\fract \int \eta(t,x)\dx=0 \quad \text{and}\quad \fract \int \psi(t,x)\dx=0.
$$
Now, remembering that
$$
\partial_{t}\psi+g \eta+ \frac{1}{2}\la\partialx \psi\ra^2  -\frac{1}{2}
\frac{\bigl(G(\eta) \psi +\partialx  \eta\cdot\partialx \psi \bigr)^2}{1+|\partialx  \eta|^2}=0
$$
we infer that 
$$
\int \frac{\bigl(G(\eta) \psi +\partialx  \eta\cdot\partialx \psi \bigr)^2}{1+|\partialx  \eta|^2}\dx
=\int \la\partialx \psi\ra^2\dx.
$$
(We parenthetically mention that the previous identity can be rigorously 
derived from the divergence theorem, see~\cite{MR4556204}.) 
Now, by using the chain rule, we verify that
$$
(\partial_y\phi)\arrowvert_{y=\eta}=\frac{G(\eta) \psi +\partialx  \eta\cdot\partialx \psi }{1+|\partialx  \eta|^2}.
$$
Consequently, the previous estimate implies that
\begin{equation}\label{N200}
\int (1+|\partialx  \eta|^2)\big(( \partial_y\phi)\arrowvert_{y=\eta}\big)^2\dx
=\int \la\partialx \psi\ra^2\dx.
\end{equation}
On the other hand, using again the chain rule, we have
$$
(\nabla_x\phi)\arrowvert_{y=\eta}=\nabla\psi-(\partial_y\phi)\arrowvert_{y=\eta}\nabla \eta,
$$
and hence the estimate of $(\nabla_x\phi)\arrowvert_{y=\eta}$ follows immediately from~\eqref{N200}. 
\end{proof}

We next give an elementary proof (obtained with Quoc-Hung Nguyen in 2021, see~\cite{Berkeley}) of an optimal trace inequality. 
\begin{proposition}[Alazard-Nguyen]
Let $d\ge 1$, 
$\eta\in W^{1,\infty}(\xR^d)$ and $\psi\in H^\mez(\xR^d)$. There exists $c>0$ such that
\begin{equation}\label{I2}
\int_{\xR^d} \psi G(\eta)\psi\dx 
\ge \frac{c}{1+\lA \nabla \eta\rA_{\BMO}} \lA \psi\rA_{\dot{H}^\mez}^2.
\end{equation}
\end{proposition}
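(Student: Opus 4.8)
The plan is to bound $\lA\psi\rA_{\dot H^{1/2}}^2$ from above by the Dirichlet energy $\int_{\xR^d}\psi\,G(\eta)\psi\,\dx$, up to the factor $1+\lA\nabla\eta\rA_{\BMO}$. The starting point is the variational characterization of $G(\eta)$: if $\phi$ is the harmonic extension solving \eqref{def:phi}, then
$$
\int_{\xR^d}\psi\,G(\eta)\psi\,\dx=\int_{\Omega}|\nabla_{x,y}\phi|^2\,\dx\dy,
$$
so it suffices to show $\lA\psi\rA_{\dot H^{1/2}}^2\lesssim (1+\lA\nabla\eta\rA_{\BMO})\int_{\Omega}|\nabla_{x,y}\phi|^2\,\dx\dy$. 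First I would flatten the domain via the change of variables $(x,y)\mapsto(x,y-\eta(x))$, which maps $\Omega$ to the lower half-space $\{y<0\}$ and turns $\phi$ into a function $v(x,y)=\phi(x,y+\eta(x))$ with $v(x,0)=\psi(x)$; the Dirichlet integral becomes $\int_{y<0}A(x)\nabla_{x,y}v\cdot\nabla_{x,y}v\,\dx\dy$ for an explicit matrix $A$ built from $\nabla\eta$, but crucially $\det A\equiv 1$ and the quadratic form is comparable to $|\nabla_{x,y}v|^2$ with constants depending only on $\lA\nabla\eta\rA_{L^\infty}$. Since we want the sharp dependence on $\lA\nabla\eta\rA_{\BMO}$ rather than $\lA\nabla\eta\rA_{L^\infty}$, the flattening is only a convenient reduction, not the whole story.

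The core of the argument is the trace inequality on the half-space in the form $\lA v(\cdot,0)\rA_{\dot H^{1/2}(\xR^d)}^2\lesssim \int_{y<0}|\nabla_{x,y}v|^2\,\dx\dy$, which is classical and has constant independent of everything; combined with the ellipticity of $A$ this already gives the estimate with $C(\lA\nabla\eta\rA_{L^\infty})$. To upgrade the norm controlling the constant from $L^\infty$ to $\BMO$, I would not flatten globally but instead use a Littlewood--Paley / dyadic decomposition of $\psi$ together with the fact that the harmonic extension $\phi$ at height comparable to $2^{-j}$ only "sees" the oscillation of $\eta$ at that scale. Concretely, testing the weak formulation against a well-chosen vector field (a Rellich-type multiplier, in the spirit of the previous proposition) of the form $Z\cdot\nabla_{x,y}\phi$ with $Z$ adapted to the geometry, and paying attention that the commutator terms involve $\nabla\eta$ only through averages over cubes, one replaces $\lA\nabla\eta\rA_{L^\infty}$ by $\lA\nabla\eta\rA_{\BMO}$ via a John--Nirenberg / Fefferman--Stein type argument. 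Equivalently, one can use the known fact (from the paradifferential analysis, or from \cite{MR4556204,Berkeley}) that $G(\eta)$ differs from $\sqrt{-\Delta}$ by an operator whose norm on $\dot H^{1/2}\to\dot H^{-1/2}$ is controlled by $\lA\nabla\eta\rA_{\BMO}$ modulo lower-order; then $\int\psi\,G(\eta)\psi\,\dx\ge\lA\psi\rA_{\dot H^{1/2}}^2-C\lA\nabla\eta\rA_{\BMO}\lA\psi\rA_{\dot H^{1/2}}^2$ and one concludes provided the first term dominates — which requires instead the coercive lower bound directly.

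I expect the main obstacle to be obtaining the \emph{sharp} $\BMO$ dependence rather than merely $L^\infty$: the naive flattening argument loses this, and recovering it requires either a delicate multi-scale decomposition in which the extension parameter is tied to the frequency of $\psi$, or a clever choice of test vector field in the variational formulation so that the error terms are genuinely in commutator form and hence estimable by $\BMO$ norms (via Coifman--Rochberg--Weiss or a direct John--Nirenberg argument). A secondary technical point is the low-frequency behaviour: $\dot H^{1/2}$ is a homogeneous space and $G(\eta)$ annihilates constants, so one must be careful that the inequality is stated and proved modulo constants, and that the Poincaré-type inequalities used to define $\phi$ variationally are compatible with the homogeneous norm on the left-hand side. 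Once these are handled, the conclusion follows by combining the half-space trace inequality with the (scale-localized) ellipticity bounds and summing over dyadic frequencies.
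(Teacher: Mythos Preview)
Your proposal circles around the right ingredients---flattening to the half-space, looking for commutator structure, invoking Coifman--Rochberg--Weiss---but it never lands on a concrete argument, and the strategies you sketch (multi-scale Littlewood--Paley with frequency-adapted extensions, or a Rellich multiplier to be determined) are far more elaborate than what is actually needed. The paper's proof is a short direct computation, and the key step you are missing is how to make the commutator appear explicitly.

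Here is what the paper does (in the case $d=1$). After flattening, set $v(x,z)=\phi(x,z+\eta(x))$; then
\[
Q:=\int\psi\,G(\eta)\psi\,\dx=\iint_{\xR^2_-}\big[(\partial_x v-\partial_z v\,\partial_x\eta)^2+(\partial_z v)^2\big]\,\dz\dx.
\]
On the other hand, by Plancherel and the fundamental theorem of calculus in $z$,
\[
\lA\psi\rA_{\dot H^{1/2}}^2=\int\psi\,\la D_x\ra\psi\,\dx
=2\iint_{\xR^2_-}(\partial_z v)\,\la D_x\ra v\,\dz\dx
=2\iint_{\xR^2_-}(\partial_z v)\,\mathcal H\partial_x v\,\dz\dx.
\]
Now simply add and subtract $(\partial_z v)\partial_x\eta$ inside the Hilbert transform:
\[
2\iint(\partial_z v)\,\mathcal H\big(\partial_x v-(\partial_z v)\partial_x\eta\big)
\;+\;2\iint(\partial_z v)\,\mathcal H\big((\partial_z v)\partial_x\eta\big).
\]
The first integral is $\le Q$ by Cauchy--Schwarz and $L^2$-boundedness of $\mathcal H$. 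For the second, antisymmetry $\mathcal H^*=-\mathcal H$ turns it into $\iint(\partial_z v)\,[\mathcal H,\partial_x\eta]\,\partial_z v$, and Coifman--Rochberg--Weiss gives the bound $\lesssim\lA\partial_x\eta\rA_{\BMO}\iint(\partial_z v)^2\le \lA\partial_x\eta\rA_{\BMO}\,Q$. That is the entire argument.

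So the gap in your proposal is not that your tools are wrong, but that you have not identified the mechanism that produces the commutator: it is the identity $\la D_x\ra=\mathcal H\partial_x$ together with the specific splitting $\partial_x v=(\partial_x v-\partial_z v\,\partial_x\eta)+\partial_z v\,\partial_x\eta$, chosen so that the first piece matches one of the terms in $Q$. No dyadic decomposition, no Rellich vector field, no paradifferential estimate on $G(\eta)-\la D_x\ra$ is needed. Your perturbative idea in the second paragraph (bounding $G(\eta)-\sqrt{-\Delta}$ in operator norm) you correctly flag as insufficient for a lower bound, and indeed the paper does not go that way.
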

\begin{remark}[A trace inequality]
Let $u\in H^1(\Omega)$ and set $\psi=u\arrowvert_{y=\eta}$. Denote by $\phi$ the harmonic extension of $\psi$. Then
\begin{align*}
\iint_{\Omega} \la\nabla_{x,y}u\ra^2\dydx\ge \iint_{\Omega} \la\nabla_{x,y}\phi\ra^2\dydx=\int_{\partial\Omega}\phi\partial_n\phi\dsigma,
\end{align*}
where we used the divergence theorem to obtain the last equality. Now, by definition of $G(\eta)$, we have
\begin{align*}
\int_{\partial\Omega}\phi\partial_n\phi\dsigma
=\int_{\xR^d} \psi G(\eta)\psi\dx.
\end{align*}
Consequently, it follows from~\eqref{I2} that
$$
\iint_{\Omega} \la\nabla_{x,y}u\ra^2\dydx\ge \frac{c}{1+\lA \nabla \eta\rA_{\BMO}} \lA u\arrowvert_{y=\eta}\rA_{\dot{H}^\mez}^2.
$$
\end{remark}
\begin{remark}[An optimal inequality] As explained in \cite{AZ-2023virial},  
the dependence in $\lA\nabla \eta\rA_{\BMO}$ is \emph{optimal}. 
More precisely, it is proved in the latter reference that
$$
\int \psi G(\eta)\psi\dx \ge \frac{c}{(1+\lA \nabla \eta\rA_{L^\infty})^{m}} \lA \psi\rA_{\dot{H}^\mez}^2\quad \Rightarrow\quad m\ge1.
$$
\end{remark}

\begin{proof}
For the sake of notational simplicity, we consider only the case $d=1$ (we refer to~\cite{Berkeley} for the general case). 
Set $v(x,z)=\phi(x,z+\eta(x))$. Then, it follows from the divergence theorem that
$$
\int_\xR \psi G(\eta)\psi\dx =\iint_{\Omega} \la\nabla_{x,y}\phi\ra^2\dydx = \iint_{\xR^{2}_-} \Big[  (\partial_x v-\partial_z v\partial_x \eta)^2+
(\partial_z v)^2\Big] \dz\dx=:Q.
$$
On the other hand, the Plancherel theorem implies that
\begin{align}
&\int_\xR \psi \D \psi\dx=\iint_{\xR^{2}_-} \partial_z (v \D v)\dz\dx
=2\iint_{\xR^{2}_-}(\partial_z v)\D v\dz\dx\notag\\
&\qquad=2\iint_{\xR^{2}_-}\Big[(\partial_z v)\mathcal{H}\big(\partial_x v-
(\partial_z v)\partial_x \eta\big)+
(\partial_z v)\mathcal{H}\big(
(\partial_z v)\partial_x \eta\big)\Big]\dz\dx,\label{N1}
\end{align}
where we denoted by $\mathcal{H}$ the Hilbert transform. 
Since $\mathcal{H}$ is bounded on $L^2$, we have
$$
2\iint_{\xR^{2}_-}(\partial_z v)\mathcal{H}\big(\partial_x v-
(\partial_z v)\partial_x \eta\big)\dz\dx
\le Q=\int_\xR \psi G(\eta)\psi\dx.
$$
So it remains only to estimate the second term in the left-hand side of \eqref{N1}. To do this, observe that, 
since $\mathcal{H}^*=-\mathcal{H}$, we have
$$
2\iint_{\xR^{2}_-}(\partial_z v)\mathcal{H}\big(
(\partial_z v)\partial_x \eta\big)\dz\dx
=\iint_{\xR^{2}_-}  (\partial_z v) \big[\mathcal{H},\partial_x \eta\big]\partial_z v \dz\dx.
$$
Then it suffices to recall that a classical result by Coifman-Rochberg-Weiss \cite{MR0412721} which states that the commutator between the Hilbert transform and a $\BMO$ function is bounded on $L^2$ (see also \cite{MR4062964}).
\end{proof}

\subsection{Elliptic estimates}

We have seen that
$$
\lA G(\eta)\rA_{H^\mez\to H^{-\mez}}+\lA G(\eta)\rA_{H^1\to L^2}\le C( \lA \nabla \eta\rA_{L^\infty}).
$$
Many other results have been proved when $\eta$ is more regular (\cite{Nalimov,CSS,WuInvent,BG,LannesJAMS}). 
In particular, we have the following

\begin{proposition}[from~\cite{ABZ3}]

$(i)$ For all $s>1+d/2$ and $1/2\le \sigma\le s$
$$
\lA G(\eta)\psi\rA_{H^{\sigma-1}} \le C\left( \lA \eta\rA_{H^{s}}\right) \lA \psi\rA_{H^\sigma}.
$$ 

$(ii)$ For all $s>1+d/2$,
\begin{equation*}
\lA \left[ G(\eta_1)-G(\eta_2)\right] \psi\rA_{H^{s-\tdm}}\le 
C\left(\lA (\eta_1,\eta_2)\rA_{H^{s+\mez}}\right) \lA \psi\rA_{H^{s}}
\lA \eta_1-\eta_2\rA_{H^{s-\mez}}.
\end{equation*}
\end{proposition}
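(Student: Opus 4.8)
The plan is to follow the strategy underlying \cite{ABZ3} (in the spirit of Alazard--Burq--Zuily): flatten the fluid domain onto a fixed strip, turn the harmonicity of $\phi$ into a second-order elliptic equation with variable coefficients, paralinearize and factorize this equation as the composition of a forward and a backward first-order parabolic evolution, and deduce the estimate from the regularizing effect of the ``good'' factor together with an interior elliptic bound. Concretely, fix a smooth truncation $\chi$ with $\chi(0)=1$ and introduce the diffeomorphism $(x,z)\in\xR^d\times(-1,0)\mapsto(x,\rho(x,z))$ with $\rho(x,0)=\eta(x)$, chosen of the \emph{regularizing} type (for instance $\rho(x,z)=z+\chi(z\D)\eta$, up to a harmless rescaling ensuring $\partial_z\rho\ge c>0$), so that $\nabla_{x,z}\rho$ is controlled in the scale $H^{s-\mez}$ rather than merely $H^{s-1}$. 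Writing $v(x,z)=\phi(x,\rho(x,z))$, the equation $\Delta_{x,y}\phi=0$ becomes
$$
\big(\partial_z^2+\alpha\Delta_x+\beta\cdot\nabla_x\partial_z+\gamma\partial_z\big)v=0,\qquad v\arrowvert_{z=0}=\psi,
$$
where $\alpha>0$ and $\beta$ are smooth functions of $\nabla_{x,z}\rho$ and $\gamma$ depends on its second derivatives; by the chain rule $G(\eta)\psi$ is a first-order expression in $(\partial_z v)\arrowvert_{z=0}$ and $\nabla\psi$, so it suffices to estimate $(\partial_z v)\arrowvert_{z=0}$ in $H^{\sigma-1}$.

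For part $(i)$, away from $z=0$ the coefficients are smooth in $z$, and classical interior elliptic estimates control $v$ on $\{z\le-\mez\}$ in every norm allowed by the coefficient regularity. The propagation towards $z=0$ is carried out in the Chemin--Lerner spaces $C^0_z([-1,0];H^\mu)\cap L^2_z([-1,0];H^{\mu+\mez})$. One replaces $\alpha,\beta$ by the associated Bony paradifferential operators, the nonlinear remainders being absorbed into a source term bounded by $C(\lA\eta\rA_{H^s})\lA\psi\rA_{H^\sigma}$ via paraproduct and product estimates; then the second-order operator factors, modulo lower order and paradifferential remainders, as $(\partial_z-\Op(a))(\partial_z-\Op(A))v=f$, with $a,A$ first-order symbols, $\RE a\le-c\D$ and $\RE A\ge c\D$. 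Setting $w=(\partial_z-\Op(A))v$, the forward parabolic equation $\partial_z w-\Op(a)w=f$ propagates the interior bound into $w\in C^0_zH^{\sigma-1}$ up to $z=0$; then solving the backward parabolic equation $\partial_z v-\Op(A)v=w$ from $z=0$ gives $v\in C^0_zH^\sigma$ and $\partial_z v\in C^0_zH^{\sigma-1}$, whence the claim after taking the trace at $z=0$ and returning to $G(\eta)\psi$.

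For part $(ii)$, flatten both domains, subtract the two elliptic equations, and note that the same elliptic operator applied to $v_1-v_2$ equals a source term which is a sum of products of $\nabla_{x,z}(\rho_1-\rho_2)$ --- controlled by $\lA\eta_1-\eta_2\rA_{H^{s-\mez}}$ --- with derivatives of $v_2$ up to order two, the latter controlled by part $(i)$ applied at regularity $s$. Running the factorization above for the difference and bookkeeping the product estimates --- one derivative lost in the action of $G$, one extra half-derivative lost because the coefficients of the two equations differ only at the level of $\eta_1-\eta_2$ --- yields exactly the stated bound, the constant $C(\lA(\eta_1,\eta_2)\rA_{H^{s+\mez}})$ controlling all the symbols of both factorizations simultaneously.

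The main obstacle is the factorization step, and behind it the choice of the flattening diffeomorphism: with the naive change of variables $z=y-\eta(x)$ the coefficient $\gamma$ contains $\Delta_x\eta\in H^{s-2}$, which lies below $L^\infty$ and is too rough both for the paradifferential symbolic calculus and for closing the estimates uniformly down to $\sigma=\mez$. The regularizing diffeomorphism buys the missing half-derivative on the coefficients, after which the rest is a careful but routine application of paradifferential calculus and of the smoothing properties of forward/backward parabolic evolutions; making the dependence on $\eta$ genuinely tame at the endpoint $\sigma=s$ is the other point requiring attention.
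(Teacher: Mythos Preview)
Your proposal is correct and follows essentially the same approach as the one the paper sketches and refers to \cite{ABZ3} for: flatten via a regularizing diffeomorphism (so that the coefficients gain the missing half-derivative), paralinearize the resulting variable-coefficient elliptic equation, factor it as a product of forward and backward first-order parabolic evolutions, and exploit the smoothing of these evolutions to recover $\partial_z v\arrowvert_{z=0}$; part $(ii)$ then follows by differencing. Since the paper itself does not prove this proposition but only indicates the strategy (change of variables, Schauder-type freezing of coefficients, paradifferential calculus) and describes the factorization in a later section for the finer Theorem~\ref{T13}, your outline is in fact more detailed than what the paper provides here and matches the method of the cited reference.
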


Notice that, in terms of Sobolev embedding, case $(i)$ corresponds to a function $\eta$ that is slightly more regular than Lipschitz. However, for case $(ii)$, the difference $\eta_1 - \eta_2$ is estimated in a norm weaker than the Lipschitz norm.

One possible approach to prove such Sobolev estimates is to employ an elementary change of variables, reducing the problem to a case where the domain is a half-space. This transformation replaces the Laplace equation with an elliptic equation with variable coefficients (see~\eqref{i29} below), depending on the derivatives of $\eta$ (which are H\"older continuous in terms of Sobolev embedding). A classical idea, rooted in Schauder's method, involves freezing the coefficient at a point, expressed as
$$
\cnx_{x,z}(\underbrace{A(x_0,z_0)}_{{\text{constant}}}\nabla_{x,z}v)=\cnx_{x,z}(\underbrace{(A(x_0,z_0)-A(x,z))}_{{\text{small}}}\nabla_{x,z}v).
$$ 
There are several ways to rigorously justify this. One method, explained in the next section, involves the use of paradifferential calculus.

\subsection{The shape derivative}

Eventually, we recall the following fundamental identity  which
helps to understand the dependence on~$\eta$.

\begin{proposition}[Lannes' shape derivative formula]\label{P:shape}
For all $\eta,\psi,\zeta\in H^\infty(\xR^d)$, the shape derivative of the Dirichlet-to-Neumann operator is given by
 \begin{equation}\label{n:shape}
  \lim_{\eps\rightarrow 0} \frac{1}{\eps}\bigl( G(\eta+\eps \zeta)\psi -G(\eta)\psi\bigr)
  = -G(\eta)(B\zeta) -\cnx (V\zeta),
 \end{equation}
where $V=(\nabla\phi)\arrowvert_{y=\eta}$ and 
$B=(\partial_y \phi)\arrowvert_{y=\eta}$, with $\phi$ the harmonic extension of $\psi$ solution to~\eqref{def:phi}.
\end{proposition}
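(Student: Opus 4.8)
The plan is to differentiate the defining boundary value problem \eqref{def:phi} with respect to the perturbation parameter $\eps$, keeping $\psi$ fixed, and to carefully track how the change of domain interacts with the Dirichlet trace condition. Write $\phi_\eps$ for the harmonic extension of $\psi$ in the domain $\Omega(\eta+\eps\zeta)=\{y<\eta(x)+\eps\zeta(x)\}$, so that $\Delta_{x,y}\phi_\eps=0$ in $\Omega(\eta+\eps\zeta)$ and $\phi_\eps(x,\eta(x)+\eps\zeta(x))=\psi(x)$. Set $\dot\phi=\lim_{\eps\to0}\eps^{-1}(\phi_\eps-\phi)$. Since the Laplace equation is linear and its domain moves, $\dot\phi$ is still harmonic in $\Omega(\eta)$, but its trace is no longer zero: differentiating the identity $\phi_\eps(x,\eta(x)+\eps\zeta(x))=\psi(x)$ in $\eps$ at $\eps=0$ gives $\dot\phi\arrowvert_{y=\eta}+\zeta\,(\partial_y\phi)\arrowvert_{y=\eta}=0$, i.e. $\dot\phi\arrowvert_{y=\eta}=-B\zeta$ with $B=(\partial_y\phi)\arrowvert_{y=\eta}$. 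Thus $\dot\phi$ is the harmonic extension of $-B\zeta$.

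Next I would differentiate the formula $G(\eta)\psi=(\partial_y\phi)\arrowvert_{y=\eta}-\nabla\eta\cdot(\nabla\phi)\arrowvert_{y=\eta}$, again being careful because three things depend on $\eps$: the vertical field $\partial_y\phi_\eps$, the horizontal field $\nabla\phi_\eps$, and the slope $\nabla(\eta+\eps\zeta)=\nabla\eta+\eps\nabla\zeta$, and moreover all traces are taken on the moving surface $y=\eta+\eps\zeta$. Evaluating a trace $F_\eps\arrowvert_{y=\eta+\eps\zeta}$ and differentiating in $\eps$ produces two contributions: the interior derivative $\dot F\arrowvert_{y=\eta}$ and the boundary-motion term $\zeta\,(\partial_y F)\arrowvert_{y=\eta}$. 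Carrying this out for $F=\partial_y\phi$ and $F=\nabla\phi$, and collecting all terms, one gets
\begin{equation*}
\frac{d}{d\eps}\Big|_{\eps=0}G(\eta+\eps\zeta)\psi
=\big(\partial_y\dot\phi\big)\arrowvert_{y=\eta}-\nabla\eta\cdot\big(\nabla\dot\phi\big)\arrowvert_{y=\eta}
+\zeta\,\partial_y\!\big(\partial_y\phi-\nabla\eta\cdot\nabla\phi\big)\arrowvert_{y=\eta}
-\nabla\zeta\cdot(\nabla\phi)\arrowvert_{y=\eta}.
\end{equation*}
The first two terms are exactly $G(\eta)(\dot\phi\arrowvert_{y=\eta})=G(\eta)(-B\zeta)=-G(\eta)(B\zeta)$, which produces the first term on the right-hand side of \eqref{n:shape}. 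It remains to recognize the remaining terms as $-\cnx(V\zeta)$ with $V=(\nabla\phi)\arrowvert_{y=\eta}$.

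For that last identification, the key is the chain rule applied to the surface restrictions of $\nabla_{x,y}\phi$. Writing $V(x)=(\nabla\phi)(x,\eta(x))$ and $B(x)=(\partial_y\phi)(x,\eta(x))$, one has $\nabla V=(\nabla_x\nabla\phi)\arrowvert_{y=\eta}+\nabla\eta\,(\partial_y\nabla\phi)\arrowvert_{y=\eta}$ and likewise $\nabla B=(\nabla_x\partial_y\phi)\arrowvert_{y=\eta}+\nabla\eta\,(\partial_y^2\phi)\arrowvert_{y=\eta}$, and one uses the harmonicity $\partial_y^2\phi=-\Delta_x\phi$ to eliminate the second vertical derivative. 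Then $\cnx(V\zeta)=\zeta\,\cnx V+V\cdot\nabla\zeta$; substituting the chain-rule expressions for $\cnx V$ and comparing with the boundary-motion term $\zeta\,\partial_y(\partial_y\phi-\nabla\eta\cdot\nabla\phi)\arrowvert_{y=\eta}$ and the term $-\nabla\zeta\cdot V$ shows these combine precisely to $-\cnx(V\zeta)$. The main obstacle is purely bookkeeping: keeping straight the two distinct sources of $\eps$-dependence (interior vs.\ moving boundary) and correctly invoking $\Delta_{x,y}\phi=0$ at the right moment to trade a second normal derivative for tangential ones; once the terms are lined up, the cancellation is algebraic. Since $\eta,\psi,\zeta\in H^\infty$, every trace and every derivative used here is smooth, so the formal differentiation is justified by standard elliptic regularity for \eqref{def:phi}, and one may pass to the limit $\eps\to0$ without difficulty.
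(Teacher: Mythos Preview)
The paper does not actually give a proof of this proposition; it states the formula and refers the reader to Lannes' original article and book for the derivation. Your argument is correct and is essentially the standard one found in those references: differentiate the Dirichlet condition $\phi_\eps(x,\eta+\eps\zeta)=\psi$ to identify $\dot\phi\arrowvert_{y=\eta}=-B\zeta$, then differentiate the defining expression $G(\eta)\psi=(\partial_y\phi-\nabla\eta\cdot\nabla\phi)\arrowvert_{y=\eta}$, separating the contribution of $\dot\phi$ (which yields $-G(\eta)(B\zeta)$ since $\dot\phi$ is the harmonic extension of $-B\zeta$) from the boundary-motion and slope-variation terms, and finally use the chain rule for $\cnx V$ together with $\partial_y^2\phi=-\Delta_x\phi$ to recognize those remaining terms as $-\cnx(V\zeta)$. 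The bookkeeping you describe is exactly right, and for $H^\infty$ data all the formal manipulations are justified.
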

\begin{remark}
This identity extends to functions with limited regularity; 
we refer to the Lannes' original article or his book (see~\cite{LannesJAMS,LannesLivre}).
\end{remark}

\section{Paralinearization of the Dirichlet-to-Neumann operator}

\subsection{Pseudo- and paradifferential operators}

Since the work of Kohn--Nirenberg and H\"ormander it is said that 
that a linear operator $T\colon \mathcal{S}'(\xR^d)\to\mathcal{S}'(\xR^d)$ acting on temperate distributions is a pseudo-differential operator if we can define it 
from a 
function $a=a(x,\xi)$ by the relation
\begin{equation}\label{Quantization1}
T \bigl(e^{ix\cdot \xi}\bigr) = a(x,\xi)e^{ix\cdot \xi}.
\end{equation}
We then say that $a$ is the symbol for $T$ and we denote $T=\Op(a)$. 
For instance, the operator 
associated with the symbol $a=\sum_{\alpha}a_\alpha(x)(i\xi)^\alpha$ is simply 
the differential operator 
$T=\sum_{\alpha}a_\alpha(x)\partial_x^\alpha$ (with classical notations). 
Another fundamental example is the case of the operator $\la D_x\ra$ defined by
$$
\la D_x\ra e^{ix\cdot \xi}=\la \xi\ra e^{ix\cdot \xi},
$$
so that $\la D_x\ra=\Op(\la \xi\ra)$.

The pseudo-differential calculus is a process that associates a symbol $a=a(x,\xi)$ defined on $\mathbb{R}^d\times \mathbb{R}^d$ with an operator $\Op (a)$, in a way which allows for an understanding of the properties of $\Op (a)$ (such as product, adjoint, and boundedness on the usual spaces of functions) simply by examining the properties of the symbols.

The application that associates an operator $\Op(a)$ to the symbol $a$ is called a quantization. There are numerous known quantizations, which are variants of \eqref{Quantization1}. Bony's quantization is particularly well-suited for nonlinear problems. Its specificity lies in the quantization of symbols with limited regularity in $x$. Bony's theory of paradifferential operators enables the study of the regularity of solutions to nonlinear partial differential equations. A brief introduction is provided here, with references to the original article by Bony~\cite{Bony}, as well as the books by H\"ormander~\cite{Hormander}, M\'etivier~\cite{MePise}, Meyer~\cite{Meyer}, and Taylor~\cite{MR1121019} for a more comprehensive understanding of the general theory.

The idea of studying the incompressible Euler equation at the free surface using tools derived from the analysis of singular integrals dates back to an article by Craig-Schanz-Sulem~\cite{CSS}. This concept has been further developed by Lannes~\cite{LannesJAMS} and Iooss-Plotnikov-Toland~\cite{IPT,IP}. The paradifferential analysis of the Dirichlet-to-Neumann operator is introduced by Alazard-M\'etivier in~\cite{AM} and further developped by Alazard-Burq-Zuily~\cite{ABZ1,ABZ3,MR4484254} and Alazard-Delort~\cite{AlDe-Sob} (see also Remark~\ref{R:refs}).

\subsection{Calder\'on's analysis of the Dirichlet-to-Neumann operator}

By using the Fourier transform, it is easily seen that 
$G(0)= \la D_x\ra$. More generally, if 
$\eta\in C^\infty$, then it is known since 
Calder\'on~(\cite{Calderon63}) that $G(\eta)$ is an elliptic, self-adjoint 
pseudo-differential operator of order $1$, 
whose principal  symbol is
\begin{equation}\label{N30}
\lambda(x,\xi)\defn\sqrt{(1+\la\partialx\eta(x)\ra^2)\la\xi\ra^2-(\partialx\eta(x)\cdot\xi)^2}.
\end{equation}
This means that one can write $G(\eta)$ under the form
$$
G(\eta)\psi =
(2\pi)^{-d}\int e^{ix\cdot\xi} \lambda(x,\xi)\widehat{\psi}(\xi)\dxi 
+ R(\eta)f,
$$
where the remainder $R(\eta)$ is of order $0$, satisfying (see Lannes~\cite{LannesLivre})
$$
\exists K\ge 1,\forall s\ge \mez, \quad 
\lA R(\eta)\psi\rA_{H^{s}}\le C\left( \lA \eta\rA_{H^{s+K}}\right)\lA \psi\rA_{H^s}.
$$

\begin{remark} Three remarks are in order:

$i)$ Notice that $\lambda$ is well-defined for any $\eta\in W^{1,\infty}(\xR^d)$.

$ii)$ If $d=1$ or $\eta=0$ then $\lambda(x,\xi)=\la \xi\ra$ and $\Op(\lambda)=\la D_x\ra$. 

$iii)$ The constant $K$ corresponds to a \emph{loss of derivatives} (see \cite{Iguchi01,LannesJAMS}). 
\end{remark}

In dimension one, the fact that $\lambda=\la \xi\ra$ implies that
\begin{align*}
G(\eta)\psi
=\la D_x\ra \psi ~+~R(\eta)\psi.
\end{align*}

By using paradifferential calculs, we will see that one can describe the remainder term $R(\eta)=G(\eta)\psi-\la D_x\ra\psi$.

\subsection{Paraproduct}\label{S:3.3}

Paraproducts are well-known for their crucial role in the study of multilinear Fourier multipliers (see~\cite{MeCo,Tao-Book}). Loosely speaking, paraproducts are restricted versions of products in which only certain types of frequency interactions are permitted. A paraproduct can be defined very simply using the Fourier inversion formula:
$$
a(x)b(x)=\frac{1}{(2\pi)^{2d}}\iint_{\xR^d\times\xR^d} e^{ix\cdot (\xi_1+\xi_2)} \, \widehat{a}(\xi_1)\, \widehat{b}(\xi_2)\dxi_1 \dxi_2.
$$
Let us decompose the integral in three terms
$$
\iint_{\xR^d\times \xR^d}=\iint_{\la\xi_1+\xi_2\ra\sim \la\xi_2\ra}+\iint_{\la\xi_1+\xi_2\ra\sim \la\xi_1\ra}
+\iint_{\la\xi_1\ra\sim \la\xi_2\ra}
$$
to obtain the following decomposition of the product
$$
ab =T_a b+T_b a+R(a,b).
$$

To define these operators more precisely, let us consider a cut-off function $\chi
$ such that
$$
\chi
(\xi_1
,\xi_2
)=1 \quad \text{if}\quad \la\xi_1
\ra\le \eps_1\la \xi_2
\ra,\qquad 
\chi
(\xi_1
,\xi_2
)=0 \quad \text{if}\quad \la\xi_1
\ra\geq \eps_2\la\xi_2
\ra,
$$
with $0<\eps_1<\eps_2<1$. Given two functions $a=a(x)$ and $b=b(x)$ one defines
\begin{align*}
T_a b&=
\frac{1}{(2\pi)^{2d}}\iint 
e^{ix\cdot(\xi_1+\xi_2)}
\chi(\xi_1,\xi_2)\widehat{a}(\xi_1)\widehat{b}(\xi_2)\dxi_1 \dxi_2,\\
T_b a&=\frac{1}{(2\pi)^{2d}}
\iint e^{ix\cdot(\xi_1+\xi_2)}\chi
(\xi_2,\xi_1)\widehat{a}(\xi_1)\widehat{b}(\xi_2)\dxi_1 \dxi_2,\\
R(a,b)&=\frac{1}{(2\pi)^{2d}}\iint e^{ix\cdot(\xi_1
+\xi_2
)}\bigl(1-\chi
(\xi_1
,\xi_2
)-\chi
(\xi_2
,\xi_1
)\bigr)\widehat{a}(\xi_1
)\widehat{b}(\xi_2
)\dxi_1 \dxi_2.
\end{align*}

This is Bony's decomposition of the product of two functions. 
One says that $T_a b$ and $T_b a$ are paraproducts, while 
$R(a,b)$ is a remainder. The key property is that a paraproduct 
by an $L^\infty$ function acts on any Sobolev spaces $H^s$ with $s$ in~$\xR$. 
The remainder term $R(a,b)$ is smoother than the paraproducts $T_a b$ and $T_b a$. More precisely, we have the following results:

\begin{theorem}[Bony]\label{TBony} 
For all $d\ge 1$ and all $s\in \xR$, we have
\begin{align*}
&&a\in L^\infty(\xR^d),&&u\in H^{s} (\xR^d)
&&\Rightarrow &&T_a u\in H^{s}(\xR^d).
\intertext{In addition, for all $s>0$, there holds}
&&a\in H^s(\xR^d),&&u\in H^s(\xR^d) 
&&\Rightarrow &&R(a,u)\in H^{{2s}-\frac{d}{2}}(\xR^d).
\end{align*}
\end{theorem}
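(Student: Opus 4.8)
The plan is to prove the two assertions of Theorem~\ref{TBony} (Bony) by reducing everything to a Littlewood-Paley decomposition and estimating dyadic blocks. I would begin by fixing a Littlewood-Paley partition of unity: choose $\varphi_0\in C_c^\infty$ supported in a ball and $\varphi\in C_c^\infty$ supported in an annulus $\{|\xi|\sim 1\}$ with $\varphi_0(\xi)+\sum_{j\ge 0}\varphi(2^{-j}\xi)=1$, and set $\Delta_j u=\Op(\varphi(2^{-j}\cdot))u$ for $j\ge 0$, $\Delta_{-1}u=\Op(\varphi_0)u$, together with $S_j=\sum_{k<j}\Delta_k$. The first reduction is to recognize that, because the cut-off $\chi(\xi_1,\xi_2)$ in the definition of $T_a u$ localizes to $|\xi_1|\le \eps_2|\xi_2|$, one can write $T_a u$ up to an acceptable (indeed smoother) error as $\sum_j (S_{j-N_0}a)\,\Delta_j u$ for a fixed integer $N_0$ depending only on $\eps_1,\eps_2$; the point of $\eps_1<\eps_2$ is exactly to leave room for such a comparison. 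This is the routine ``paraproducts are essentially $\sum_j S_{j-N_0}a\,\Delta_j u$'' lemma, and I would state it and use it rather than grinding through the Fourier-support bookkeeping.

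For the first claim ($a\in L^\infty$, $u\in H^s$, any $s\in\xR$), the key structural fact is that the $j$-th summand $u_j:=(S_{j-N_0}a)\,\Delta_j u$ has Fourier transform supported in an annulus $|\xi|\sim 2^j$ (this uses that $S_{j-N_0}a$ has frequencies $\lesssim 2^{j-N_0}\ll 2^j$). Hence $\|T_a u\|_{H^s}^2\sim \sum_j 2^{2js}\|u_j\|_{L^2}^2$ by the (almost-)orthogonality of functions with disjoint dyadic frequency supports. Then I would estimate each block crudely: $\|u_j\|_{L^2}\le \|S_{j-N_0}a\|_{L^\infty}\|\Delta_j u\|_{L^2}\le C\|a\|_{L^\infty}\|\Delta_j u\|_{L^2}$, using that $S_{j-N_0}$ is bounded on $L^\infty$ uniformly in $j$ (convolution with an $L^1$-normalized kernel). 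Summing, $\|T_a u\|_{H^s}^2\lesssim \|a\|_{L^\infty}^2\sum_j 2^{2js}\|\Delta_j u\|_{L^2}^2\sim \|a\|_{L^\infty}^2\|u\|_{H^s}^2$. Note no positivity of $s$ is needed here: the spectral localization of $u_j$ is what makes the reassembly work for all real $s$.

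For the second claim (the remainder $R(a,u)$ with $a,u\in H^s$, $s>0$), I would use the standard decomposition $R(a,u)=\sum_{|j-k|\le N_0}\Delta_j a\,\Delta_k u$, whose diagonal structure means the summand $\Delta_j a\,\Delta_k u$ has Fourier support in a \emph{ball} of radius $\lesssim 2^{j}$ (not an annulus), so there is no spectral orthogonality to exploit and one must instead sum in $\ell^1$ after gaining room from the target regularity. Concretely, writing $R(a,u)=\sum_j R_j$ with $R_j=\Delta_j a\sum_{|k-j|\le N_0}\Delta_k u$, one has $\widehat{R_j}$ supported in $\{|\xi|\lesssim 2^j\}$, and $\|R_j\|_{L^1}\le \|\Delta_j a\|_{L^2}\sum_{|k-j|\le N_0}\|\Delta_k u\|_{L^2}\lesssim c_j d_j 2^{-2js}\|a\|_{H^s}\|u\|_{H^s}$ with $(c_j),(d_j)\in\ell^2$. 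To conclude $R(a,u)\in H^{2s-d/2}$ I would use the elementary fact that if $f_j$ has Fourier support in $\{|\xi|\le C2^j\}$ and $\sum_j 2^{j\mu}\|f_j\|_{L^2}<\infty$ with $\mu>0$ (resp.\ $\sum_j 2^{j\mu}\|f_j\|_{L^1}<\infty$ with $\mu>0$ via Bernstein's inequality $\|f_j\|_{L^2}\lesssim 2^{jd/2}\|f_j\|_{L^1}$), then $\sum_j f_j\in H^\mu$. Taking $\mu=2s-d/2$, this requires $\sum_j 2^{j(2s-d/2)}\|R_j\|_{L^2}<\infty$; combining Bernstein, $\|R_j\|_{L^2}\lesssim 2^{jd/2}\|R_j\|_{L^1}\lesssim c_jd_j 2^{jd/2}2^{-2js}\|a\|_{H^s}\|u\|_{H^s}$, gives $2^{j(2s-d/2)}\|R_j\|_{L^2}\lesssim c_jd_j\|a\|_{H^s}\|u\|_{H^s}$, which is summable by Cauchy-Schwarz in $j$. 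The main obstacle, and the only place where $s>0$ is essential, is precisely this last step: in the high-high interaction the frequency supports only give a ball, so orthogonality fails and one is forced to pay with a summable prefactor $2^{j\mu}$, $\mu>0$, which is exactly the gain $2s-d/2$ one can afford (and not more). A final small point to address is the low-frequency block $\Delta_{-1}$ and the fact that for $s\le d/2$ the exponent $2s-d/2$ can be $\le 0$, in which case the statement should be read with $H^{2s-d/2}$ interpreted as the correct (possibly negative-index) Sobolev space, or one restricts attention to $s>d/4$; I would mention this caveat but not dwell on it.
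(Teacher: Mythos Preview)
The paper does not prove this theorem: it is stated as a classical result of Bony, with references to \cite{Bony,Hormander,MePise,Meyer,MR1121019} for the general theory, so there is no proof in the paper to compare against. Your proposal is the standard Littlewood--Paley proof and is essentially correct; it is exactly the argument one finds in the references the paper cites.

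One small correction is worth making. In your treatment of the remainder you apply Bernstein to each $R_j$ first and then invoke the ``ball lemma'' with exponent $\mu=2s-d/2$; since that lemma needs $\mu>0$, this forces the artificial restriction $s>d/4$ that you flag at the end. The clean fix is to swap the order: keep the $L^1$ estimate $2^{2js}\|R_j\|_{L^1}\lesssim c_j d_j\in\ell^1$, apply the ball lemma in $B^{2s}_{1,1}$ (now with exponent $2s>0$, which is exactly the hypothesis of the theorem), and only then use the Sobolev/Bernstein embedding $B^{2s}_{1,1}\hookrightarrow B^{2s-d/2}_{2,1}\hookrightarrow H^{2s-d/2}$. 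This yields the full range $s>0$ with no caveat. (Your parenthetical ``$s\le d/2$'' should also read $s\le d/4$.)
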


\begin{theorem}[Coifman--Meyer]
If $a\in W^{1,\infty}(\xR^d)$ and $u\in L^2(\xR^d)$ then 
$$
au-T_a u \in H^1(\xR^d).
$$
\end{theorem}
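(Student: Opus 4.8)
\medskip

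\noindent\textit{Proof proposal.} The natural starting point is Bony's decomposition of the product,
\[
au = T_a u + T_u a + R(a,u),
\]
which reduces the claim to showing that the two remaining pieces $T_u a$ and $R(a,u)$ already belong to $H^1(\xR^d)$. Here and below, $N_0$ denotes a fixed integer depending only on the supports of the Littlewood--Paley cut-offs.

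The remainder term is the more elementary one. Writing $R(a,u)=\sum_{|j-k|\le N_0}\Delta_j a\,\Delta_k u$, so that the two factors carry comparable frequencies, I would use the elementary bound $\|\Delta_j a\|_{L^\infty}\lesssim 2^{-j}\|\nabla a\|_{L^\infty}$ --- this is where the $W^{1,\infty}$ hypothesis enters --- together with $\|\Delta_k u\|_{L^2}=c_k$, $(c_k)_k\in\ell^2$. Since each summand is spectrally supported in a ball of radius $\sim 2^j$, one obtains $\|\Delta_m R(a,u)\|_{L^2}\lesssim\|\nabla a\|_{L^\infty}\sum_{j\ge m-N_0}2^{-j}c_j$, and the series $\sum_m 2^{2m}\|\Delta_m R(a,u)\|_{L^2}^2$ is then summed by a discrete Schur test; it converges precisely because the gain $2^{-j}$ coming from the regularity of $a$ beats the loss $2^{2m}$ coming from the $H^1$ norm. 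Together with the (harmless) low-frequency blocks, this yields $\|R(a,u)\|_{H^1}\lesssim\|a\|_{W^{1,\infty}}\|u\|_{L^2}$.

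The heart of the matter is the low--high paraproduct $T_u a=\sum_j S_{j-N_0}u\,\Delta_j a$, whose summands are spectrally localized in dyadic annuli $|\xi|\sim 2^j$. The useful move is to let the gradient fall on the factor where it does the least damage, i.e.\ $\nabla T_u a=T_{\nabla u}a+T_u(\nabla a)$. In the first term $\nabla u$ is now the low-frequency factor, and
\[
\|T_{\nabla u}a\|_{L^2}^2 \approx \sum_j \|S_{j-N_0}(\nabla u)\,\Delta_j a\|_{L^2}^2 \lesssim \|\nabla a\|_{L^\infty}^2\sum_j 2^{-2j}\sum_{k\le j-N_0}2^{2k}c_k^2 \lesssim \|\nabla a\|_{L^\infty}^2\|u\|_{L^2}^2 ,
\]
where the last step interchanges the two sums and uses that the resulting geometric series in $j$ converges. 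The genuinely delicate term is $T_u(\nabla a)$: its high-frequency factor $\nabla a$ lies only in $L^\infty=B^0_{\infty,\infty}$, the borderline Besov space, so that $\|\Delta_j(\nabla a)\|_{L^\infty}$ no longer decays in $j$ and one cannot afford to separate the two factors. The Coifman--Meyer idea is instead to keep the quantity $\sum_j |S_{j-N_0}u(x)|^2\,|\Delta_j(\nabla a)(x)|^2$ intact, to dominate $|S_{j-N_0}u(x)|$ pointwise by the Hardy--Littlewood maximal function $Mu(x)$, and to reduce matters to controlling $\int_{\xR^d}(Mu)^2\bigl(\sum_j|\Delta_j(\nabla a)|^2\bigr)\dx$ by combining the $L^2$ bound for $Mu$ with the Littlewood--Paley--Stein square-function estimate (the square function of an $L^\infty$ function lies in $\mathrm{BMO}$) --- that is, by the very singular-integral mechanism at the core of Coifman--Meyer's theory. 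I expect this last step to be the main obstacle: it is precisely here that the soft, frequency-counting Littlewood--Paley arguments fail, and one must appeal to the $\mathrm{BMO}$/maximal-function machinery to close the endpoint estimate.
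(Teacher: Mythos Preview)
The paper states this theorem of Coifman--Meyer as background and does not supply a proof, so there is nothing to compare against; I simply review your argument on its own merits.

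Your decomposition $au-T_au=T_ua+R(a,u)$, the treatment of $R(a,u)$, and the splitting $\nabla T_ua=T_{\nabla u}a+T_u(\nabla a)$ together with the estimate of $T_{\nabla u}a$ are all correct and standard. You have also correctly isolated $T_u(\nabla a)$ as the genuinely endpoint term.

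The gap is in the very last move. After bounding $|S_{j-N_0}u(x)|\le Mu(x)$ uniformly in $j$ you arrive at
\[
\int_{\xR^d}(Mu)^2\,G\,\dx,\qquad G\defn\sum_j\bigl|\Delta_j(\nabla a)\bigr|^2,
\]
and propose to control this by combining $Mu\in L^2$ with $\sqrt{G}=S(\nabla a)\in\BMO$. These two facts cannot be combined to bound that integral: $(Mu)^2$ lies only in $L^1$ (and is nonnegative, hence never in the Hardy space $H^1$), while $G=S(\nabla a)^2$ need not lie in $L^\infty$ or even in $\BMO$. In fact the integral can diverge. In dimension $d=1$ take $a(x)=|x|\in W^{1,\infty}$, so that $\nabla a=\mathrm{sign}(x)$; a direct computation gives $G(x)\sim\log(1/|x|)$ near the origin. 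Choosing $u(x)=\bigl(|x|\log^2(1/|x|)\bigr)^{-1/2}\chi_{|x|<1/2}\in L^2(\xR)$ one has $Mu\ge|u|$ a.e.\ and hence
\[
\int (Mu)^2 G\,\dx\ge c\int_0^{1/2}\frac{\log(1/x)}{x\log^2(1/x)}\,\dx=+\infty.
\]
Thus the sup-out step is genuinely lossy and the argument, as written, does not close.

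The correct mechanism is the one you allude to but do not quite invoke: the \emph{Carleson embedding theorem}. One must \emph{not} pass to $G$; instead one keeps the scale and observes that the sequence of measures $\diff\mu_j\defn|\Delta_j(\nabla a)(x)|^2\dx$ satisfies the discrete Carleson condition
\[
\sup_{Q}\frac{1}{|Q|}\sum_{2^{-j}\le\ell(Q)}\int_Q|\Delta_j(\nabla a)|^2\dx\lesssim\|\nabla a\|_{L^\infty}^2,
\]
and the Carleson embedding then yields directly
\[
\sum_j\int_{\xR^d}|S_{j-N_0}u|^2\,|\Delta_j(\nabla a)|^2\dx\lesssim\|\nabla a\|_{L^\infty}^2\|u\|_{L^2}^2.
\]
The proof of this embedding does use the maximal function, but through a tent-space/stopping-time argument that exploits the restriction $2^{-j}\le\ell(Q)$---precisely the information destroyed when one sums over $j$ to form $G$.
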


\subsection{Paralinearization formula}

We are now ready to state a paralinearization formula for the Dirichlet-to-Neumann operator; more precisely a paradifferential refinement of the linearization formula given by Proposition~\ref{P:shape}. For the sake of simplicity, 
we consider only the case $d=1$ which allows to state a result involving 
only paraproducts. 
The case $d\ge 2$ requires in addition to introduce paradifferential operators.

\begin{theorem}[from~\cite{AM}]\label{T13} Let $d=1$ and $3<\gamma<s$. Then
\begin{equation}\label{Npara}
G(\eta)\psi=\D (\psi-T_B \eta)
-\partial_x \bigl( T_{V}\eta\bigr)+F
\end{equation}
where $V=(\partial_x\phi)\arrowvert_{y=\eta}$, 
$B=(\partial_y \phi)\arrowvert_{y=\eta}$ ($\phi$ is given by \eqref{def:phi}) and $F$ is a smooth quadratic remainder:
\begin{equation}\label{I7}
\lA F\rA_{H^{s+\gamma-4}}
\le 
C\left(\lA \eta\rA_{C^{\gamma}}\right)\left\{
\lA \psi\rA_{C^\gamma}\lA \eta\rA_{H^s}+\lA \eta\rA_{C^{\gamma}}\blA \psi \brA_{H^{s}}\right\}.
\end{equation}
\end{theorem}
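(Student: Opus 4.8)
The plan is to reduce the problem to a flat strip and to paralinearize the resulting elliptic equation. First I would straighten the boundary by the change of variables $v(x,z)=\phi(x,z+\eta(x))$, which turns the harmonic equation $\Delta_{x,y}\phi=0$ in $\{y<\eta(x)\}$ into a divergence-form elliptic equation $\cnx_{x,z}(A\nabla_{x,z}v)=0$ in the half-space $\{z<0\}$, with coefficients $A$ that are smooth functions of $\partial_x\eta$; since $\eta\in C^{\gamma}$ with $\gamma>3$, the coefficients are at least $C^{2}$ in terms of Sobolev embedding, and in particular $\nabla_x\eta\in L^\infty$, which will let me use the Bony and Coifman--Meyer theorems freely. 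In these coordinates one has the elementary identities $B=(\partial_y\phi)|_{y=\eta}=(\partial_z v)|_{z=0}/(\text{something harmless in }d=1)$ and $V=(\partial_x\phi)|_{y=\eta}=(\partial_x v-(\partial_z v)\partial_x\eta)|_{z=0}$, and $G(\eta)\psi=(\partial_z v - \ldots)|_{z=0}$ expressed through the conormal derivative. The key point is to obtain precise elliptic regularity for $v$ near $z=0$: I would establish that $v$, restricted to dyadic slabs $\{-2^{-j}\le z\le -2^{-j-1}\}$ and frequency-localized at frequency $2^j$, enjoys the expected smoothing, so that the trace of $v$ and of $\nabla_{x,z}v$ on $\{z=0\}$ lie in the right spaces and the ``tail'' contributes only to the smooth remainder $F$.

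Next I would paralinearize. Freezing the top-order part of the elliptic operator via Bony's paraproduct, one writes $A\nabla v = T_A\nabla v + (\text{remainder})$, and the remainder $(A-T_A)\nabla v$ is one derivative smoother by Coifman--Meyer because $A\in W^{1,\infty}$. This leads to a paradifferential equation for $v$ of the form $(\partial_z^2 + \text{paradifferential operator of order }2\text{ in }x + \text{lower order})v = (\text{good error})$, whose characteristic roots are $\mp T_{|\xi|}$ in the $d=1$ case (here the full symbol collapses to $|\xi|$, which is why only paraproducts — not genuine paradifferential operators — are needed). Factoring this operator and solving the resulting first-order ODE in $z$ with the Dirichlet data $v|_{z=0}=\psi$, I would get $(\partial_z v)|_{z=0} = -\D\psi + (\text{terms involving }T_{\partial_x\eta}, T_B, T_V) + (\text{smoothing remainder})$. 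Matching these paraproduct terms with the chain-rule expressions for $B$ and $V$ above, and using the algebraic identity relating $G(\eta)\psi$ to $(\partial_z v)|_{z=0}$ and $(\partial_x v)|_{z=0}$, yields exactly $G(\eta)\psi = \D(\psi - T_B\eta) - \partial_x(T_V\eta) + F$.

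Finally I would track the regularity of $F$. Every error term is of one of three types: (a) a paraproduct remainder $R(a,b)$ or a Coifman--Meyer gain $ab-T_ab$, which by Theorem~\ref{TBony} gains derivatives and is bilinear in $(\eta,\psi)$; (b) a commutator between a paraproduct and a paradifferential-type operator, controlled by the symbolic calculus with the coefficients measured in $C^{\gamma-2}$; or (c) the exponentially small contribution of the interior of the strip, which is $C^\infty$ in $x$. Collecting the worst of these gives the bound $\lA F\rA_{H^{s+\gamma-4}}\le C(\lA\eta\rA_{C^\gamma})\{\lA\psi\rA_{C^\gamma}\lA\eta\rA_{H^s}+\lA\eta\rA_{C^\gamma}\lA\psi\rA_{H^s}\}$, the ``$\gamma-4$'' coming from the two derivatives lost in freezing the coefficients plus the two-derivative nature of the elliptic operator, and the quadratic structure coming from the fact that $F$ vanishes to second order at $(\eta,\psi)=(0,0)$ (at $\eta=0$, $G(0)=\D$ exactly). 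The main obstacle, I expect, is precisely step (b): carrying out the paradifferential solution of the $z$-ODE while keeping the remainder \emph{quadratic} and with the \emph{sharp} index $s+\gamma-4$ — one must avoid the naive loss that comes from treating the coefficients and the solution independently, and instead exploit that the solution itself is a (para)differential function of $\eta$, so that products of the error with $v$ are genuinely bilinear in the data. The low regularity $\gamma$ only slightly above $3$ leaves no room to spare, so the bookkeeping of which norm ($C^\gamma$ versus $H^s$) each factor carries must be done with care.
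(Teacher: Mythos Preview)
Your overall scheme---flatten the boundary, paralinearize the resulting elliptic operator, factor it into first-order pieces, and solve the parabolic ODE in $z$---matches the paper. But there is a genuine gap at the heart of your argument, precisely at the point you yourself flag as ``the main obstacle''.

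You propose to paralinearize the equation for $v(x,z)=\phi(x,z+\eta(x))$ directly, writing $A\nabla v=T_A\nabla v+(\text{remainder})$, and then to factor and solve. The paper explains (see \S\ref{S:3.6}) that this does \emph{not} work: if you paralinearize $Pv=0$ you obtain $T_p v=f_1$ with $f_1$ only in $H^{s-2}$, and this bound \emph{does not improve} no matter how many H\"older derivatives of $v$ you assume bounded. In other words, the Coifman--Meyer gain $A\nabla v-T_A\nabla v\in H^1$ is one derivative, full stop; it will never give you the extra $\gamma-2$ derivatives needed to reach $H^{s+\gamma-4}$. Your final paragraph correctly senses the danger (``avoid the naive loss that comes from treating the coefficients and the solution independently'') but does not supply the mechanism that avoids it.

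The missing idea is the \emph{good unknown of Alinhac}: instead of $v$, one works with
\[
u \defn v - T_{\partial_z v}\,\eta,
\]
which is the trace-level incarnation of Alinhac's paracomposition $\kappa^*\phi=\phi\circ\kappa-T_{\phi'\circ\kappa}\kappa$. The point is that $T_p u=f_2$ with $f_2$ genuinely in $H^{s+\gamma-K}$: the subtraction of $T_{\partial_z v}\eta$ cancels precisely the worst paralinearization error in $T_p v$, because $\kappa^*\Delta - T_p\kappa^*$ is smoothing of order governed by the H\"older regularity $\gamma$ of $\kappa$. Only after passing to $u$ does the factorization $(\partial_z-T_a)(\partial_z-T_A)u\sim 0$ yield a remainder at the claimed regularity. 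The formula \eqref{Npara} then follows because $u\arrowvert_{z=0}=\psi-T_B\eta$ is exactly the argument of $\la D_x\ra$ in the statement---the $T_B\eta$ term is not recovered by ``matching chain-rule expressions'' after the fact, it \emph{is} the boundary trace of the good unknown.
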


\begin{remark}\label{R:refs}
Many extensions of this results have been proved by many authors. To name a few, we 
refer to Alazard-Burq-Zuily~\cite{ABZ1,ABZ3}, Alazard-Delort~\cite{AlDe-Sob}, 
de Poyferr\'e~\cite{dePoyferre1}, de Poyferr\'e-Nguyen~\cite{dPN1,dPN2}, Berti-Delort~\cite{BD-2018}, Ai~\cite{Ai-2019,Ai-2020}, Wang~\cite{Wang-2020}, Zhu~\cite{zhu2022propagation}, 
Fan Zheng~\cite{MR4400907}, Chengyang Shao~\cite{shao2023toolbox}; additional references are given in Section~\ref{S:4.1}.
\end{remark}

The statement involves only paraproducts, but we will explain that the proof involves, in an essential way, two other kinds of operators: paracomposition operators and paradifferential operators. 
Before that, we begin by analyzing a simple case where we consider only the quadratic terms in the 
Taylor expansion of the Dirichlet-to-Neumann operator.

\subsection{A toy quadratic model.}

We here study the quadratic terms in the Taylor expansion 
of the Dirichlet-Neumann operator~$G(\eta)$ 
with respect to the free surface elevation~$\eta$. 
Craig, Schanz and Sulem (see \cite{CSS} and~\cite[Chapter~$11$]{SuSu}) have shown that one can expand the 
Dirichlet-to-Neumann operator as a sum of pseudo-differential operators and gave 
precise estimates for the remainders. This Taylor expansion can be obtained by using Lannes' shape derivative formula. In particular, we have the following result (proved 
in Chapter $2$ in~\cite{AlDe-Sob}).

\begin{proposition}\label{T21}
Set
$$
G_{\le 2}(\eta)f= \la D_x\ra f
{-\la D_x\ra(\eta \la D_x\ra f) 
-\partial_x(\eta\partial_xf)}.
$$
Let~$(s,\gamma,\mu)\in \xR^3$ be such that 
$$
s-1/2>\gamma\ge 14,\quad s\ge \mu\ge 5,\quad \gamma\not\in \mez \xN,
$$ 
and 
consider~$(\eta,\psi)\in H^{s+\mez}(\xR)\times H^{s}(\xR)$. 
There exists a non decreasing function~$C\colon \xR\rightarrow \xR$ such that, \begin{multline}\label{n136}
\lA G(\eta)\psi-G_{\le 2}(\eta)\psi\rA_{H^{\mu-1}}\\ \le 
C(\lA \eta\rA_{C^{\gamma}} )
\lA \eta\rA_{C^{\gamma}} \Bigl\{\lA \psi\rA_{C^\gamma}\lA \eta \rA_{H^s}+\lA \eta \rA_{C^{\gamma}}
\blA \psi\brA_{H^{\mu+\mez}}\Bigr\}.
\end{multline}
\end{proposition}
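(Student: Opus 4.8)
The plan is to obtain the quadratic expansion $G_{\le 2}(\eta)$ from the exact paralinearization formula of Theorem~\ref{T13} together with Lannes' shape derivative formula (Proposition~\ref{P:shape}), treating $G(\eta)\psi$ as a function of $\eta$ at fixed $\psi$ and performing a Taylor expansion to second order around $\eta=0$. The starting point is the identity $G(0)\psi=\la D_x\ra\psi$ and the shape derivative $\frac{d}{d\eps}G(\eps\zeta)\psi|_{\eps=0}=-\la D_x\ra(B\zeta)-\partial_x(V\zeta)$, in which, at $\eta=0$, one has $V=\partial_x\psi$ and $B=\la D_x\ra\psi$. This already yields the linear term $-\la D_x\ra(\eta\la D_x\ra\psi)-\partial_x(\eta\partial_x\psi)$, so $G_{\le 2}(\eta)\psi$ is precisely the degree-$\le 2$ truncation and the claim is a quantitative control of the cubic-and-higher remainder $R_{\ge 3}:=G(\eta)\psi-G_{\le 2}(\eta)\psi$.

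The key steps, in order, would be: (1) write $R_{\ge 3}$ using the integral form of Taylor's theorem, $R_{\ge 3}=\int_0^1(1-t)\frac{d^2}{dt^2}\big(G(t\eta)\psi\big)\,dt$, and express $\frac{d^2}{dt^2}G(t\eta)\psi$ by differentiating the shape derivative formula once more; this produces terms of the schematic form $\la D_x\ra(\dot B\,\eta)$, $\partial_x(\dot V\,\eta)$, $\la D_x\ra(B\,\partial_t\eta\text{-type})$, where $\dot B,\dot V$ are the shape derivatives of $B$ and $V$. (2) Use the known elliptic estimates (Proposition from~\cite{ABZ3} and the bounds on $V,B$) to control $\dot V,\dot B$ and their iterated shape derivatives in the relevant Sobolev and H\"older norms, always peeling off one factor of $\eta$ in $C^\gamma$ via a paraproduct/remainder decomposition (Bony, Coifman–Meyer). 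The point is that each of the two extra derivatives of $t\mapsto G(t\eta)\psi$ costs one factor of $\eta$, so $R_{\ge 3}$ is at least cubic in $\eta$ and one can always arrange to have at least one factor measured in $C^\gamma$ and the top-regularity factor measured in $H^s$ (for $\eta$) or $H^{\mu+1/2}$ (for $\psi$), which is exactly the structure of the right-hand side of~\eqref{n136}. (3) Alternatively, and probably more cleanly, combine Theorem~\ref{T13} directly: in~\eqref{Npara} expand $B$ and $V$ to first order in $\eta$, i.e. $B=\la D_x\ra\psi + B_{\ge 2}$, $V=\partial_x\psi+V_{\ge 2}$ with $B_{\ge 2},V_{\ge 2}$ at least linear in $\eta$, substitute into the paraproducts $\D(\psi-T_B\eta)-\partial_x(T_V\eta)$, replace paraproducts by full products up to controlled errors (Coifman–Meyer), and absorb the resulting genuinely-quadratic-in-$\eta$ corrections together with $F$ from~\eqref{I7} into $R_{\ge 3}$. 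The estimate~\eqref{I7} for $F$ already has the desired bilinear form; the extra terms $\D(T_{B_{\ge 2}}\eta)$, $\partial_x(T_{V_{\ge 2}}\eta)$ are estimated by the paraproduct rule plus the elliptic bounds on $B_{\ge 2},V_{\ge 2}$.

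The main obstacle I expect is bookkeeping of the loss of derivatives and verifying that the numerology $s-1/2>\gamma\ge 14$, $s\ge\mu\ge 5$ is exactly what is needed to close every term: one must check that differentiating the shape derivative formula twice and invoking the elliptic estimates never asks for more than $\gamma$ H\"older derivatives on the low-regularity copy of $\eta$ nor more than $s$ (resp.\ $\mu+1/2$) Sobolev derivatives on the high-regularity copies, and that the output lands in $H^{\mu-1}$. A secondary subtlety is the hypothesis $\gamma\notin\frac12\xN$, which is there so that the paradifferential/paracomposition machinery underlying Theorem~\ref{T13} applies without endpoint issues; one simply carries this assumption through. Apart from these, the argument is a routine, if lengthy, combination of the quoted paralinearization formula, the shape derivative identity, and the standard paraproduct estimates, so I would present it as a corollary-style computation rather than reproving anything from scratch.
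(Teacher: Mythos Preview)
Your primary approach (the Taylor integral remainder combined with iterated applications of Lannes' shape derivative formula, Proposition~\ref{P:shape}) is correct and is precisely what the paper indicates: the paper does not give a self-contained proof of Proposition~\ref{T21} but simply refers to Chapter~2 of~\cite{AlDe-Sob} and remarks that ``this Taylor expansion can be obtained by using Lannes' shape derivative formula.'' Your steps~(1)--(2) are exactly that strategy, and your identification of the bookkeeping issues (derivative counts, the tame bilinear structure of the right-hand side, the role of $\gamma\notin\tfrac12\xN$) is accurate.

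One caution about your alternative route~(3) through Theorem~\ref{T13}: as stated it does not close. The remainder $F$ in~\eqref{Npara} obeys only the \emph{bilinear} estimate~\eqref{I7}, with no extra factor of $\lA\eta\rA_{C^\gamma}$ in front; it is genuinely quadratic, not cubic. When you expand $B=\la D_x\ra\psi+B_{\ge 2}$, $V=\partial_x\psi+V_{\ge 2}$ and replace paraproducts by products, the quadratic residues you generate (namely $\la D_x\ra R(\eta,\la D_x\ra\psi)+\partial_x R(\eta,\partial_x\psi)$, which is exactly $-F_2$ in the paper's notation) do \emph{not} cancel against $F$ on the level of the estimate~\eqref{I7} alone. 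Of course the difference $F-F_2$ is cubic because $G(\eta)\psi-G_{\le 2}(\eta)\psi$ is, but extracting a quantitative cubic bound requires knowing the structure of $F$ beyond~\eqref{I7}, i.e.\ reopening the proof of Theorem~\ref{T13}. So~(3) is not ``more clean'' but rather circular unless supplemented by a finer analysis; stick with your approach~(1)--(2).
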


We will now paralinearize the quadratic term and prove Theorem~\ref{T13} for this model case. Let us write $\la D_x\ra(\eta \la D_x\ra f) +
\partial_x(\eta\partial_xf)$ as a bilinear Fourier multiplier:
\begin{align*}
\la D_x\ra(\eta \la D_x\ra f) +
\partial_x(\eta\partial_xf)
&=\frac{1}{(2\pi)^{2}}\iint e^{ix(\xi_1+\xi_2)}
m(\xi_1,\xi_2)\widehat{\eta}(\xi_1)\widehat{f}(\xi_2)\dxi_1\dxi_2\\[2ex]
m(\xi_1,\xi_2)&=\la \xi_1+\xi_2\ra\la \xi_2\ra-(\xi_1+\xi_2)\xi_2.
\end{align*}
Now the key observation is that
$$
\la \xi_1\ra< \la \xi_2\ra 
~\Rightarrow~ (\xi_1+\xi_2)\xi_2>0 ~\Rightarrow~ m(\xi_1,\xi_2)=0.
$$
Consequently
$$
\la D_x\ra(T_\eta \la D_x\ra f) +
\partial_x(T_\eta \partial_xf)=0
$$
and hence
\begin{align*}
G_{\le 2}(\eta)f 
&= \la D_x\ra f 
-\la D_x\ra T_{| D_x|f}\eta 
-\partial_x (T_{\partial_x f} \eta)
-\la D_x\ra R(| D_x|f,\eta) 
-\partial_x R(\partial_x f, \eta)\\
&
=\la D_x\ra (f-T_{| D_x|f}\eta)
-\partial_x (T_{\partial_x f} \eta)
+F_2\quad\text{where}\\
F_2&=-\la D_x\ra R(| D_x|f,\eta) 
-\partial_x R(\partial_x f, \eta).
\end{align*}
Now, directly from Theorem~\ref{TBony}, 
we see that $F_2$ satisfies an estimate analogous to \eqref{I7}.

\subsection{The good unknown of Alinhac}\label{S:3.6}

In this section, we aim to explain the significance of the quantity
\[
\omega = \psi - T_{B}\eta
\]
which plays a crucial role in the paralinearization formula~\eqref{Npara} for the Dirichlet-to-Neumann operator. This unknown 
was initially introduced by Alazard-M\'etivier in \cite{AM} where it was 
associated with the concept of the "good unknown" of Alinhac. The latter plays a pivotal role in the analysis of surface waves, particularly in the work of Alinhac on shock waves and rarefaction waves for 
systems of conservation laws \cite{Ali,Alipara} (we refer to  Majda \cite{MajMem1,MajMem2} and M\'etivier \cite{Metivier1989}). It is noteworthy that, in the context of water-wave problems, a variation (at the linearized equation level, rather than the paradifferential level) was already 
employed by Lannes \cite{LannesJAMS} and Trakhinin \cite{Trakhinin}. 

Here, we aim to clarify that the appearance of $\omega$ is inherent when introducing the paracomposition operator of Alinhac \cite{Alipara}, which is associated with the change of variables that flattens the boundary $\{y=\eta(x)\}$ of the domain. This approach optimally captures the limited smoothness of the coordinate transformation.

To study the elliptic equation $\Delta_{x,y}\phi=0$ in $\Omega=\{(x,y)\in \xR^{d+1}\, 
;\, y<\eta(x)\}$, a classical strategy is to reduce the problem to the 
half-space $\xR^d\times (-\infty,0)$. To do so, the simplest  
change of coordinates is
$$
\kappa\colon (x,z)\mapsto (x,z+\eta(x)).
$$ 
Then $\phi(x,y)$ solves the Laplace equation $\Delta_{x,y}\phi=0$ if and only if $v=\phi\circ \kappa=\phi(x,z+\eta(x))$ is a solution of 
$Pv=0$ in $z<0$, where the operator $P$ is defined by
\begin{equation}\label{i29}
P=(1+|\nabla \eta|^2)\partial_z ^2+\Delta^2-2\nabla\eta\cdot\nabla\partial_z-(\Delta\eta)\partial_z.
\end{equation}
The boundary condition on $\phi\arrowvert_{y=\eta(x)}$ 
becomes $v(x,0)=\psi(x)$ and $G(\eta)$ is given by
$$
G(\eta)\psi=\bigl[ (1+|\nabla\eta|^2)\partial_zv-\nabla\eta\cdot\nabla v\bigr]\big\arrowvert_{z=0}.
$$

We can now explain the main difficulty to work with a diffeomorphism 
with limited regularity. 
Denote $D=-i\partial$ and 
introduce
$$
p(x,\xi,\zeta)=(1+|\nabla\eta(x)|^2)\zeta^2+|\xi|^2-2\nabla\eta(x)\cdot \xi \zeta
+i(\Delta\eta(x))\zeta.
$$
Observe that 
$P=-p(x,D_x,D_z)$ and write  
$T_{p}$ as a short notation for 
$$
T_{1+|\nabla(x)|^2}D_z^2+|D_x|^2-2T_{\nabla\eta}\cdot D_xD_z
+T_{\Delta\eta}\partial_z.
$$ 
Since $p(x,D_x,D_z)v=0$, by using Theorem~\ref{TBony}, we get hat 
$T_{p}v=f_1$ where 
$f_1$ is 
continuous in $z$ with values in $H^{s-2}$ if $\eta$ is in $H^s$ 
and if in addition 
the first and second order derivatives in $x,z$ of $v$ are 
bounded. 

Now, the main observation 
is that 
one can associate to the diffeomorphism $\kappa$ a paracomposition 
operator, denoted by $\kappa^*$, such that 
$T_{p}( \kappa^* \phi)=f_2$ 
where $f_2$ is a smoother source term. More precisely, where 
$f_2$ is continuous in $z$ with values in $H^{s+\gamma-K(d)}$, 
if $\eta$ is in $H^s$ and if the derivatives in $x,z$ 
of order 
less than $\gamma$ of $v$ are bounded 
(the main difference between $f_1$ and $f_2$ is that one cannot obtain a better 
regularity for $f_1$ by assuming that $v$ is smoother). 

We do 
not define $\kappa^*$ here. Instead let us describe 
the two main properties of paracomposition operators 
(see the article~\cite{Alipara}).
First, 
up to a smooth remainder, 
there holds
$$
\kappa^*\phi =\phi\circ \kappa-T_{\phi'\circ \kappa}\kappa
$$
where we denote by $\phi'$ the differential of $\phi$. 
On the other hand, one can use a symbolic calculus 
formula to compute the commutator of $\kappa^*$ with a paradifferential operator. 
The latter formula implies that
$$
\kappa^* \Delta -T_{p} \kappa^* 
$$
is a smoothing operator (which means here an operator bounded from $H^\mu$ to 
$H^{\mu+m}$ for any real number $\mu$, where 
$m$ is a fixed positive number depending on the regularity of $\kappa$). 
Since $\Delta_{x,y}\phi=0$, this means that 
$T_{p} \bigl(\phi\circ \kappa-T_{\phi'\circ \kappa}\kappa\bigr)$ is a smooth remainder term as claimed above. 

Now observe that
$$
\omega=\bigl( \phi\circ \kappa-T_{\phi'\circ \kappa}\kappa\bigr)\big\arrowvert_{z=0}.
$$
This explains why the good unknown of Alinhac $\omega$ enters into the analysis of free boundary problems.

\subsection{Elliptic factorization}
Inspired by the theory of Alinhac's paracomposition operators, let us introduce
$$
u=\phi\circ \kappa -T_{\phi'\circ \kappa}\kappa =v - T_{\partial_{z}v} \eta, 
\quad\text{where}\quad v(x,z)=\phi(x,z+\eta(x)).
$$
As explained in \S\ref{S:3.6}, 
$u$ satisfies a paradifferential elliptic equation with a smooth remainder term:
\begin{equation*}
T_{1+\la \partialx\eta\ra^2}\partial_z^2 u  +\Delta u -2 T_{\partialx\eta}\cdot \partialx \partial_z u  
- T_{\Delta \eta}\partial_z u\sim 0
\end{equation*}
where we say that $f\sim 0$ provided that 
$$
f\in C_z^0\big([-1,0];H_x^{{s+\gamma}-K(d)}(\xR^d)\big)
$$
for some constant $K$ depending only on the dimension $d$ (we stated Theorem~\ref{T13} only for $d=1$, but we explain here the strategy of the proof in arbitrary dimension~$d$). 

To prove Theorem~\ref{T13}, the next step is to 
compute the normal derivatives of $u$ at the boundary in terms of tangential derivatives. To do so, we adapt the strategy 
introduced by Calder\'on~\cite{Calderon63} to this paradifferential setting. 
This requires to introduce paradifferential operators. 

Given a symbol $a=a(x,\xi)$, 
the paradifferential operator $T_a$ of {Bony} is defined by symbol smoothing:
\begin{equation}\label{N31}
T_a =\Op (\sigma)
\quad\text{ with }
\widehat{\sigma}(\theta,\eta) = \chi (\theta,\eta) \widehat{a}(\theta,\eta)
\end{equation}
where $\widehat{a}(\theta,\eta)=\int e^{-ix\cdot\theta}a(x,\eta)\, dx$ is the Fourier transform of $a$ with respect to the first variable 
and where $\chi$ is the cut-off function already introduced to define paraproducts (see~\S\ref{S:3.3}).

Then one may use symbolic calculus for paradifferential operators to factor out the second order elliptic operator 
as the product of two operators of order one. Specifically, we want to get the existence of two symbols 
$\slam,\Slam$ such that 
\begin{equation*}
( \partial_z - T_ \slam ) (\partial_z - T_\Slam)u  \sim 0.
\end{equation*}
To do so, we seek $\slam$ and $\Slam$ under the form  
\begin{equation*}
\slam=\sum\slam_{1-j}, \qquad
\Slam=\sum\Slam_{1-j}
\end{equation*}
(where the indices refer to the order of the symbols) 
such that
\begin{equation*}
\begin{aligned}
&\sum \frac{1}{i^\alpha\alpha!}\ \partial_{\xi}^\alpha a_{m} \, \partial_x^\alpha A_{\ell}
=-\frac{\la \xi\ra^2}{1+|\nabla\eta|^2},\\
&\sum \slam_m+\Slam_m = \frac{1}{1+|\nabla\eta|^2}
\left(  2i \nabla\eta\cdot\xi  + \Delta \eta\right).
\end{aligned}
\end{equation*}
This system can be solved by induction (see~\cite{AM} for the details). 

Then we introduce $ w \defn (\partial_z - T_\Slam)u$ solution to
\begin{equation*}
\partial_z w  - T_{\slam} w \sim 0.
\end{equation*}
Since this is a parabolic evolution equation in $z$ (seen with initial data in $z<0$), we 
conclude that
$$
(\partial_z u  -    T_\Slam u)\arrowvert_{z=0}  =w(0)\sim 0.
$$
This allows to express $\partial_{z}u$ on the boundary $\{z=0\}$ 
in terms of $T_Au$, and hence in terms of tangential derivatives, which is the key ingredient to prove Theorem~\ref{T13}. 

\section{Applications}\label{S:4.1}

Numerous applications of Theorem~\ref{T13} have 
been explored in the analysis of the Cauchy problem for water-waves. 
However, due to the vast scope of this topic, which was extensively covered in lectures by 
Daniel Tataru and Sijue Wu at the Abel Symposium, we refrain from an 
in-depth discussion here. For comprehensive references on this subject, 
we direct the reader to their texts and Chapter 1 in \cite{Berkeley}.

Meanwhile, we highlight only articles which are directly related to the preceding 
paradifferential analysis of the Dirichlet-to-Neumann operator. A series of recent papers delve into the study of the 
Cauchy problem with rough initial data, starting 
with \cite{ABZ1,ABZ3} 
and extending through \cite{ABZ-memoir,dPN1,dPN2,dePoyferre-ARMA,Ai-2019,Ai-2020,shao2023cauchy}. 
Recently, this paradifferential approach experiences 
significant enhancement through the utilization of balanced 
energy estimates introduced by Ai, Ifrim, and Tataru (\cite{ai2019dimensional,ai2020dimensional}).

The paradifferential approach 
has also found application in the examination 
of normal form 
transformations, following the seminal work of Shatah~\cite{Shatah-1985-normal} and Wu~\cite{MR2507638} 
(see~\cite{AlDe,AlDe-Sob,Delort-ICM,MR3784694,Wang-2020,ai2019dimensional,ai2020dimensional,MR3962880,MR3962880,
MR4588319,MR4658635,MR4447560,deng2022wave}). 
Additionally, longstanding questions posed by 
small divisor problems in the theory of standing water waves (see \cite{PlTo,IPT,Icras}) can be analyzed through this perspective (see \cite{AM,AB,BM-2020}). Furthermore, applications extend to the proof of dispersive properties of water-waves (\cite{MR2763354,ABZ1,Ai-2019,Ai-2020}), encompassing results related to control theory~\cite{ABHK,Zhu1}.

In this section, we present two illustrative examples: one concerning the proof of Sobolev energy estimates for the Hele-Shaw equation, and the second regarding the conjugacy of the capillary water-wave equations to a constant coefficient equation.

\subsection{The Cauchy problem for the Hele-Shaw equation}

\begin{theorem}[from \cite{Cheng-Belinchon-Shkoller-AdvMath,Matioc-APDE-2019,AMS,Nguyen-Pausader}]\label{T16}
The Cauchy problem for the Hele-Shaw equation $\partial_t \eta+G(\eta)\eta=0$ is well-posed (localy in time) on the sub-critical Sobolev spaces $H^s(\xT^d)$ for $s>1+d/2$. 
\end{theorem}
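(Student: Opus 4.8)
The plan is to prove local well-posedness by a standard quasilinear scheme: paralinearize the equation using Theorem~\ref{T13}, identify the leading-order parabolic structure, derive a priori estimates via a paradifferential energy method, and close the argument by a Bona--Smith approximation together with the contraction estimates coming from the second part of the cited Dirichlet-to-Neumann estimates.

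\medskip

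\noindent\textbf{Step 1: Paralinearization.} Apply the paralinearization formula~\eqref{Npara} (and its higher-dimensional analogue via paradifferential operators) with $\psi=\eta$. Since for the Hele-Shaw equation one has $\psi=-g\eta$, the good unknown $\omega=\psi-T_B\eta$ is, up to the constant $g$, a lower-order modification of $\eta$ itself, so the equation $\partial_t\eta+gG(\eta)\eta=0$ becomes
\begin{equation*}
\partial_t\eta+gT_\lambda\eta = \text{l.o.t.},
\end{equation*}
where $\lambda(x,\xi)$ is Calder\'on's principal symbol~\eqref{N30}, which is elliptic of order $1$ and positive: $\lambda(x,\xi)\gtrsim |\xi|$. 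The ``lower-order terms'' are controlled in $H^{s-1}$ by $C(\lA\eta\rA_{H^s})$ using Proposition~\ref{P:shape}, the remainder estimate~\eqref{I7}, and the boundedness properties of paraproducts (Theorem~\ref{TBony}) together with the smoothing of $R(a,b)$ and the Coifman--Meyer estimate for $B$ and $V$. Because $T_\lambda$ is a positive elliptic operator of order $1$, the linearized equation $\partial_t\eta+gT_\lambda\eta=0$ is \emph{parabolic}, so this is the structural feature that drives the whole argument.

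\medskip

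\noindent\textbf{Step 2: A priori estimates.} Perform an energy estimate on $\partial_x^s\eta$ (or $\la D_x\ra^s\eta$). Testing the paralinearized equation against $\la D_x\ra^s\eta$ and using the G\aa rding inequality for the positive symbol $\lambda$ gives a gain of a half-derivative (smoothing) on the left, at the cost of commutator terms between $\la D_x\ra^s$ and $T_\lambda$, which are of order $s$ (one lower than $T_\lambda$ applied at the top level because $\lambda$ depends on $\nabla\eta\in C^{s-1-d/2}$), hence bounded by $C(\lA\eta\rA_{H^s})\lA\eta\rA_{H^s}^2$. The upshot is a differential inequality of the form $\frac{d}{dt}\lA\eta\rA_{H^s}^2 + c\lA\eta\rA_{H^{s+1/2}}^2\le C(\lA\eta\rA_{H^s})\lA\eta\rA_{H^s}^2$, which yields an a priori bound on a time interval $[0,T]$ with $T$ depending only on $\lA\eta_0\rA_{H^s}$. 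The parabolic smoothing term on the left is a bonus that is not even needed for existence but is convenient for uniqueness and continuation.

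\medskip

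\noindent\textbf{Step 3: Existence, uniqueness, continuity.} For existence, regularize: mollify the initial data (or add an artificial viscosity $-\eps\Delta$ to the equation), solve the regularized problem by a fixed-point argument exploiting the parabolic gain, and pass to the limit using the uniform $H^s$ bounds from Step~2 and a compactness argument (Aubin--Lions). For uniqueness and continuous dependence, estimate the difference $\eta_1-\eta_2$ of two solutions: subtracting the equations and using part~$(ii)$ of Proposition~\ref{P:shape} (from~\cite{ABZ3}) — which controls $[G(\eta_1)-G(\eta_2)]\psi$ in terms of $\lA\eta_1-\eta_2\rA_{H^{s-1/2}}$, a norm \emph{below} the energy level — one gets a closed estimate for $\lA\eta_1-\eta_2\rA_{H^{s-1/2}}$, hence uniqueness; upgrading to continuity in $H^s$ is then done by a Bona--Smith argument, comparing the solution with solutions emanating from mollified data. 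Subcriticality $s>1+d/2$ is exactly what makes $\nabla\eta$ (barely) H\"older continuous, which is the regularity threshold required for the paradifferential calculus and the symbol estimates on $\lambda$ to apply.

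\medskip

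\noindent\textbf{Main obstacle.} The delicate point is the commutator/remainder bookkeeping in Step~2: one must verify that \emph{every} term other than $gT_\lambda\eta$ — the shape-derivative remainder, the paraproduct corrections $T_B\eta$ and $T_V\eta$, the commutator $[\la D_x\ra^s,T_\lambda]$, and the dependence of $\lambda$ on $\eta$ through a paraproduct rather than a genuine product — lands in $H^{s-1}$ (or better) with a bound of the form $C(\lA\eta\rA_{H^s})\lA\eta\rA_{H^s}$, so that the G\aa rding-type positivity of $T_\lambda$ genuinely dominates. This is where the precise quadratic-remainder estimate~\eqref{I7} and the contraction estimate~$(ii)$ are indispensable, and it is the step that fixes the exact Sobolev threshold.
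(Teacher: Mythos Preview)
Your scheme has two connected gaps, both biting precisely at the threshold $s>1+d/2$. First, you invoke Theorem~\ref{T13} and the remainder bound~\eqref{I7}, but these require $\gamma>3$, hence via Sobolev embedding roughly $s>3+d/2$. The paper says this explicitly: ``we cannot rely on Theorem~\ref{T13} since we merely assume that $s>d/2+1$.'' The correct tool at low regularity is the lemma from~\cite{ABZ3} quoted just afterwards, which only gives $G(\eta)f-T_\lambda f\in H^{\sigma-1+\eps}$ for $\sigma\le s-\tfrac12$. Second, even with that lemma, a direct paralinearization of $G(\eta)\eta$ does not close: with $f=\eta$ the constraint $\sigma\le s-\tfrac12$ forces the remainder into $H^{s-3/2+\eps}$, one full derivative short of what the $H^s$ energy estimate needs against the parabolic gain $\|\eta\|_{H^{s+1/2}}$. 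Relatedly, your claim that the equation reduces to $\partial_t\eta+gT_\lambda\eta=\text{l.o.t.}$ is incorrect: the good-unknown term $T_\lambda T_B\eta$ and the transport piece $\cnx(T_V\eta)$ are of order~$1$, not lower order.

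The paper's route avoids both issues by \emph{quasilinearizing before paralinearizing}. Using the shape derivative, it derives a closed equation for $V=-(\nabla_x\phi)\arrowvert_{y=\eta}\in H^{s-1}$, namely $\partial_tV+V\cdot\nabla V+a\,G(\eta)V+\text{l.o.t.}=0$ with $a=1-(\partial_y\phi)\arrowvert_{y=\eta}$. Two structural ingredients then appear that are absent from your outline: (i) the Taylor-type sign condition $a>0$, proved via the Hopf--Zaremba principle applied to the harmonic function $y-\phi$ (the analogue of Wu's result), which is what makes the equation parabolic---positivity of $\lambda$ alone does not suffice once the coefficient $a$ is in front; and (ii) the observation that the transport operator $V\cdot\nabla$ has symmetric part of order $1-\eps$ only (since $\cnx V$ is merely in $H^{s-2}$, not $L^\infty$, near the threshold), which is still subprincipal for a parabolic estimate by interpolation. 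After this, the low-regularity paralinearization lemma is applied to $G(\eta)V$ with $\sigma\le s-1<s-\tfrac12$, and the energy estimate at level $H^{s-1}$ on $V$ closes.
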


Let us explain the strategy of the proof. 
The first step is to quasilinearize the Hele-Shaw equation.  To do so, one uses Lannes' shape derivative foruma (see Proposition~\ref{P:shape}) to prove the following

\begin{lemma}
Let $\phi=\phi(t,x,y)$ denote the harmonic extension of $\eta$ in $\{y<\eta(t,x)\}$ and set
$$
a=1-(\partial_y \phi) \arrowvert_{y=\eta}\quad ,\quad V=-(\nabla_x \phi) \arrowvert_{y=\eta}.
$$
Then
\begin{align}
&\partial_t V+V\cdot\nabla V+aG(\eta)V+\frac{\gamma}{a}V=0\quad\text{where}\label{N21}\\ 
&\gamma=\frac{1}{1+|\nabla \eta|^2}\Big(G(\eta)(a^2+V^2)-2aG(\eta)a-2V\cdot G(\eta)V\Big).\notag
\end{align}
\end{lemma}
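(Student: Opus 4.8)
The plan is to quasilinearize by differentiating the Hele-Shaw equation in time, using Lannes' shape derivative formula (Proposition~\ref{P:shape}) together with chain-rule trace identities, and exploiting that, since here the Dirichlet datum is proportional to~$\eta$, every quantity is slaved to~$\eta$. First I would record the \emph{trace identities}. From $\phi\arrowvert_{y=\eta}=\eta$ the chain rule gives $\nabla\eta=(\nabla_x\phi)\arrowvert_{y=\eta}+(\partial_y\phi)\arrowvert_{y=\eta}\,\nabla\eta$, so with $B:=(\partial_y\phi)\arrowvert_{y=\eta}=1-a$ one gets the crucial relation $V=-a\nabla\eta$ (special to Hele-Shaw, where $\nabla\psi\parallel\nabla\eta$), and also $B=(G(\eta)\eta+|\nabla\eta|^2)/(1+|\nabla\eta|^2)$, i.e.\ $a=(1-G(\eta)\eta)/(1+|\nabla\eta|^2)$. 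Likewise $G(\eta)\eta=(\partial_y\phi-\nabla\eta\cdot\nabla_x\phi)\arrowvert_{y=\eta}=B+V\cdot\nabla\eta$, so the equation reads $\partial_t\eta=-(B+V\cdot\nabla\eta)$. Differentiating once the harmonicity of $\partial_y\phi$ and of the $\partial_{x_i}\phi$, and trading normal derivatives for tangential ones, yields the ``second order'' identities $G(\eta)a=-\cnx V$ and a closed formula for $G(\eta)V$ in terms of $\nabla a,\nabla^2\eta$; from these I would verify the purely algebraic identity $\tfrac12\nabla(a^2+V^2)=aG(\eta)V-a(\cnx V)\nabla\eta$.

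For the transport part I would use the Bernoulli structure (a bookkeeping device that can be bypassed by differentiating $V=-a\nabla\eta$ directly). With $u=-\nabla_{x,y}\phi$ the fluid velocity, $\partial_tu+u\cdot\nabla_{x,y}u=-\nabla_{x,y}\mathfrak p$ with $\mathfrak p=\partial_t\phi-\tfrac12|\nabla_{x,y}\phi|^2$. Since the surface is transported by the flow, one has $(\partial_tf+u\cdot\nabla_{x,y}f)\arrowvert_{y=\eta}=D_t(f\arrowvert_{y=\eta})$ with $D_t:=\partial_t+V\cdot\nabla$ (this is the kinematic equation in disguise), so the horizontal and vertical traces give $D_tV=-(\nabla_x\mathfrak p)\arrowvert_{y=\eta}$ and $(\partial_y\mathfrak p)\arrowvert_{y=\eta}=D_tB$. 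Using $P\arrowvert_{y=\eta}=0$ and the chain rule one finds $\mathfrak p\arrowvert_{y=\eta}=\tfrac12(a^2+V^2)+\mathrm{const}$, hence $D_tV=-\tfrac12\nabla(a^2+V^2)+(D_tB)\nabla\eta$. Combining this with the algebraic identity of the first step and with $V=-a\nabla\eta$ gives $D_tV+aG(\eta)V=(D_tB+a\,\cnx V)\nabla\eta=-\tfrac1a(D_tB+a\,\cnx V)V$, so~\eqref{N21} is reduced to proving $\gamma=D_tB+a\,\cnx V$.

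The remaining point is the computation of $D_tB$. From $a=(1-G(\eta)\eta)/(1+|\nabla\eta|^2)$ and Proposition~\ref{P:shape} (applied to the two occurrences of $\eta$ in $G(\eta)\eta$) one gets $\partial_t(G(\eta)\eta)=G(\eta)(a\,\partial_t\eta)+\cnx(V\,\partial_t\eta)=-G(\eta)(aG(\eta)\eta)-\cnx(VG(\eta)\eta)$, hence an explicit formula for $\partial_tB$ and then for $D_tB=\partial_tB+V\cdot\nabla B$. Substituting $G(\eta)\eta=1-a(1+|\nabla\eta|^2)$, using $G(\eta)a=-\cnx V$ and the formula for $G(\eta)V$ from the first step, and simplifying, I would check that $D_tB+a\,\cnx V$ equals $(1+|\nabla\eta|^2)^{-1}\bigl(G(\eta)(a^2+V^2)-2aG(\eta)a-2V\cdot G(\eta)V\bigr)$, which is exactly the stated~$\gamma$.

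The main obstacle is the bookkeeping in this last step: once every normal derivative of~$\phi$ on $\Sigma(t)$ has been expressed through tangential ones, one must check that the leftover quadratic terms regroup \emph{exactly} into the combinations $G(\eta)(f^2)-2fG(\eta)f$ for $f=a$ and $f=V_i$ --- the obstructions to $G(\eta)$ being a derivation --- so that $\gamma$ appears in closed form rather than as an unstructured expression. As the hypotheses provide $H^\infty$ data there is essentially no analytic difficulty here; the content is algebraic, together with keeping the normalization ($g=1$) and the sign conventions ($u=-\nabla_{x,y}\phi$, $a=-\partial_yP\arrowvert_{y=\eta}$) consistent throughout.
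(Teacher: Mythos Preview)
Your approach is correct and matches the paper's own indication, which merely says that the lemma follows from Lannes' shape derivative formula (Proposition~\ref{P:shape}); you have supplied the missing scaffolding by organizing the computation around the material derivative $D_t=\partial_t+V\cdot\nabla$ and the auxiliary Bernoulli function $\mathfrak p=\partial_t\phi-\tfrac12|\nabla_{x,y}\phi|^2$, which is exactly how the computation is carried out in the reference~\cite{AMS}. The only point to clean up is cosmetic: the identity $\mathfrak p\arrowvert_{y=\eta}=\tfrac12(a^2+|V|^2)-\tfrac12$ follows directly from the trace identities $(\partial_t\phi)\arrowvert_{y=\eta}=a\,\partial_t\eta$ and $|\nabla_{x,y}\phi|^2\arrowvert_{y=\eta}=|V|^2+(1-a)^2$ together with $\partial_t\eta=-G(\eta)\eta=-(1-a(1+|\nabla\eta|^2))$, so there is no need to invoke ``$P\arrowvert_{y=\eta}=0$'' (which here is the Darcy pressure, not~$\mathfrak p$); once that is done, the final verification $D_tB+a\,\cnx V=\gamma$ is, as you say, pure bookkeeping.
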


Then, one proves that the previous equation for $V$ 
is parabolic. This amounts to prove that the coefficient $a$ is positive.

\begin{lemma}
{$a>0$}.
\end{lemma}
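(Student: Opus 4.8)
The plan is to deduce $a>0$ from Hopf's boundary point lemma applied to the harmonic function $w:=\phi-y$. Observe first that the claim $a>0$ is exactly the \emph{strict} inequality $\partial_y\phi\arrowvert_{y=\eta}<1$, that is $\partial_y(\phi-y)\arrowvert_{y=\eta}<0$.

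The function $w=\phi-y$ is convenient for two reasons: since $y$ is harmonic, $\Delta_{x,y}w=0$ in $\Omega(t)=\{y<\eta(t,x)\}$; and $w$ vanishes on the free surface, because $w\arrowvert_{y=\eta}=\eta-\eta=0$. A short computation, using that $w$ vanishes on $\Sigma(t)$ so that $\nabla_{x,y}w\arrowvert_{y=\eta}$ is parallel to the outward normal $n=(1+|\nabla\eta|^2)^{-1/2}(-\nabla\eta,1)$, gives $\partial_y w\arrowvert_{y=\eta}=(1+|\nabla\eta|^2)^{-1/2}\,\partial_n w\arrowvert_{y=\eta}$, hence $a=-(1+|\nabla\eta|^2)^{-1/2}\,\partial_n w\arrowvert_{y=\eta}$. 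So it is enough to prove the strict sign $\partial_n w\arrowvert_{y=\eta}<0$ at every point of $\Sigma(t)$.

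To get this I would first locate the sign of $w$ by the maximum principle. Since we are on $\xT^d$, the domain is compact in the horizontal variable and unbounded only as $y\to-\infty$; there, the variational construction of $\phi$ — in particular $\nabla_{x,y}\phi\in L^2(\Omega)$, which (separating variables in $x$) forces the horizontal mean of $\phi$ to be independent of $y$ and the nonzero horizontal Fourier modes to decay exponentially — shows that $\phi$ stays bounded, so that $w=\phi-y\to+\infty$. Thus $w$ is harmonic, bounded below, equal to $0$ on $\Sigma(t)$, and tends to $+\infty$ at the bottom, so the minimum principle gives $w\ge0$ in $\Omega(t)$ and the strong maximum principle gives $w>0$ in the interior. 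Finally, since $\eta\in H^s(\xT^d)$ with $s>1+d/2$ embeds in $C^{1,\alpha}$, the domain satisfies an interior ball condition and elliptic (Schauder) regularity gives $\phi\in C^{1,\alpha}$ up to $\Sigma(t)$, so Hopf's lemma applies and yields $\partial_n w\arrowvert_{y=\eta}<0$, whence $a>0$.

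The main obstacle, and the only step that is not purely formal, is the behavior of $\phi$ as $y\to-\infty$: one must be sure that $w$ is bounded below and that its infimum over $\overline{\Omega(t)}$ is attained on $\Sigma(t)$ and equals $0$, which is exactly where the $L^2$ bound on $\nabla_{x,y}\phi$ is used. A lesser point is the amount of boundary regularity needed for Hopf's lemma, which is supplied here by $H^s\hookrightarrow C^{1,\alpha}$; alternatively one may run the argument with a mollified $\eta_\eps$ and pass to the limit, since $a$ depends continuously on $\eta$ in $H^s$.
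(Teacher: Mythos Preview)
Your proof is correct and follows essentially the same approach as the paper: apply the Hopf--Zaremba boundary point lemma to the harmonic function $y-\phi$ (equivalently, your $w=\phi-y$), which vanishes on $\Sigma$, to obtain $\partial_n(y-\phi)>0$ and hence $a>0$. The paper's version is terser and does not spell out the behavior as $y\to-\infty$ or the interior ball condition, but the argument is the same.
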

This is the analogue of a famous result by Wu~\cite{WuInvent} for the water wave equations. The proof is readily obtained by noticing that 
the function $y-\phi$ is harmonic and vanishes on the boundary. Therefore, the Hopf-Zaremba principle 
gives $\partial_n (y-\phi)>0$, equivalent to $a>0$.

To analyze \eqref{N21}, we also have to compare the two first order operators: $G(\eta)$ and $A=V\cdot\nabla$. The key point is that, even if $A$ is of order $1$, its real part is of lower order. Indeed
$$
{(A+A^*)f=-(\cnx V)f}  \quad \text{is of order $0$},
$$
where the fact that it is of order $0$ means for instance that it is bounded from $L^2$ to $L^2$. However, this holds only if $V$ is $W^{1,\infty}_x$, which corresponds to the case $s>d/2+2$. To handle the general case $s>d/2+1$, we observe that, for such $s$, $A+A^*$ is not of order $0$ in general, but it is of order strictly smaller than $1$. Namely, it is of order $1-\eps$ provided that 
$V$ is $L^\infty_t(C^{0,\eps})$. This is enough to see $A$ as a subprincipal term: indeed, for a parabolic evolution equation, by using classical interpolation arguments, one can 
see any operator of order strictly smaller than the order of the dissipative term as subprincipal.

The last step of the proof is to paralinearize $G(\eta)$. Here, we cannot rely on Theorem~\ref{T13} since we merely assume that $s>d/2+1$. Instead, we need 
a microlocal  description 
of the Dirichlet-to-Neumann operator 
in the whole range of  $C^s$ domains, $s>1$. The latter analysis was done in \cite{ABZ3}, where 
the following result is proved.

\begin{lemma}
Let~$d\ge 1$ and~$s>1+\frac{d}{2}$. For any $\sigma$ and $\eps$ such that
$$
\frac{1}{2}\leq \sigma \leq s-\mez,\quad 0<\eps\leq \mez, \qquad \eps< s-1-\frac{d}{2},
$$
the remainder $R(\eta)f\defn G(\eta)f-T_\lambda f$ satisfies
\begin{equation*}
\lA R(\eta)f\rA_{H^{\sigma-1+\eps}}\le C\bigl(\| \eta \|_{H^{s}}\bigr)
\lA f\rA_{H^{\sigma}}.
\end{equation*}
Recall that $\lambda$ is defined by \eqref{N30} and $T_\lambda$ by~\eqref{N31}. 
In particular, if $d=1$ then $T_\lambda=\la D_x\ra$ (up to a smoothing operator).
\end{lemma}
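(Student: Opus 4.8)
The plan is to reduce the elliptic problem in the fluid domain to a paradifferential ODE in $z$ and then carry out a Calder\'on-type factorization, exactly as sketched in Section~3, but being careful to track regularity in the low-regularity regime $s>1+d/2$. First I would flatten the domain via $\kappa\colon(x,z)\mapsto(x,z+\eta(x))$ and pass to the good unknown of Alinhac $u=v-T_{\partial_z v}\eta$ with $v(x,z)=\phi(x,z+\eta(x))$. As recalled in \S\ref{S:3.6}, $u$ then solves a paradifferential elliptic equation
\[
T_{1+|\partialx\eta|^2}\partial_z^2 u+\Delta u-2T_{\partialx\eta}\cdot\partialx\partial_z u-T_{\Delta\eta}\partial_z u=f,
\]
with a source $f$ that is more regular than the solution itself. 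The crucial bookkeeping point, compared to Theorem~\ref{T13}, is that here $\eta\in H^s$ with $s>1+d/2$ gives only $\partialx\eta\in C^{0,\eps}$ for $\eps<s-1-d/2$, so the paraproduct remainders in Bony's and Coifman--Meyer's theorems gain only $\eps$ derivatives rather than a full derivative; this is precisely the source of the $+\eps$ in the statement.

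Next I would run the symbolic factorization $(\partial_z-T_a)(\partial_z-T_A)u\sim 0$, but only to finite order: since the symbols $a_{1-j},A_{1-j}$ lose regularity as $j$ grows, one keeps just enough terms (the principal symbols $a_1=-\lambda/(1+|\partialx\eta|^2)^{1/2}$-type and $A_1=+\lambda/(1+|\partialx\eta|^2)^{1/2}$-type, plus the subprincipal correction matching the $2i\partialx\eta\cdot\xi+\Delta\eta$ term) so that the error term in the factorization is $C^0_z$ with values in $H^{\sigma-1+\eps}$. Setting $w=(\partial_z-T_A)u$, one has a parabolic equation $\partial_z w-T_a w\sim 0$ in $z<0$ whose generator $T_a$ has principal symbol comparable to $-\lambda\asymp-|\xi|$; standard parabolic smoothing (maximal regularity for the paradifferential heat semigroup, as in \cite{AM,ABZ3}) then shows $w(0)\in H^{\sigma-1+\eps}$, i.e. $(\partial_z u-T_A u)|_{z=0}\sim 0$. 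Translating back through $G(\eta)\psi=[(1+|\partialx\eta|^2)\partial_z v-\partialx\eta\cdot\partialx v]|_{z=0}$, using $u|_{z=0}=\omega=\psi-T_B\eta$ and the paralinearization of the nonlinear substitution $v\mapsto u$, one identifies the principal symbol of $G(\eta)$ with $\lambda$ and collects everything else into the remainder $R(\eta)f=G(\eta)f-T_\lambda f$, which then obeys the claimed bound with the gain $\eps$. The case $d=1$ is the degenerate one where $\lambda=\la\xi\ra$, so $T_\lambda=\D$ up to a smoothing operator.

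I expect the main obstacle to be the regularity accounting in the low-regularity range: one must verify at each step that the paraproduct and paradifferential remainders (Bony, Coifman--Meyer, symbolic calculus for composition of paradifferential operators, and the paracomposition estimates of \cite{Alipara}) genuinely gain exactly $\eps$ derivatives and no more, and that $\eps$ can be taken up to $\min(\tfrac12,\,s-1-\tfrac d2)$ without needing $\partialx\eta$ to be more than $C^{0,\eps}$. In particular one needs the subprincipal symbol $\Delta\eta$, which is only $H^{s-2}\hookrightarrow C^{-2+s-d/2}$, to be handled as a paradifferential symbol of order $\le\eps$ rather than order $1$; this is what forces the constraint $\eps<s-1-d/2$. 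The parabolic smoothing step is essentially classical once the factorization is in place, and the reduction from $u$ back to $\psi$ and $G(\eta)\psi$ is routine paraproduct bookkeeping; so the proof really lives or dies on the first two points. Full details are carried out in \cite{ABZ3}.
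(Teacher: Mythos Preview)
Your proposal is correct and follows essentially the same approach as the paper: the paper does not prove this lemma but cites \cite{ABZ3}, and your outline matches the strategy sketched in \S3.6--3.7 (flattening, good unknown, paradifferential elliptic equation, Calder\'on-type factorization, parabolic smoothing in $z$), adapted to the low-regularity regime where the gain is only $\eps<s-1-d/2$. Your identification of the source of the $+\eps$ gain (the $C^{0,\eps}$ regularity of $\nabla\eta$) and of the bottleneck (the handling of the $T_{\Delta\eta}\partial_z$ term) is on point; the only minor caveat is that in this range one typically keeps only the principal symbols in the factorization, treating the subprincipal contribution directly as a remainder of order $1-\eps$, rather than including it in the symbolic expansion.
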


Then by combining the previous arguments, one can give a fairly direct proof of 
Sobolev energy estimates (using classical interpolation arguments).

One advantage of this approach is that it gives a 
blow-up criterion (indeed, such blow-up criteria are systematically discussed by Taylor in his books for various nonlinear equations (\cite{MR1121019,MR2744149}) who deduces them from applications of the paradifferential calculus). 
\begin{lemma}\label{P:Cauchy}
Let $\alpha\in (0,1)$. Consider a regular solution $h$ 
as given by Theorem~\ref{T16}, 
and define by $T^*$ its lifespan. Then the following alternative holds:
either $T^*=+\infty$ or
\begin{equation}\label{n110}
\limsup_{t\to T^*}\lA \eta(t)\rA_{C^{1,\alpha}(\xR^d)}=+\infty.
\end{equation}
\end{lemma}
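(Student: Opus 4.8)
The plan is to establish the blow-up criterion by a standard continuation argument: one shows that as long as $\lA \eta(t)\rA_{C^{1,\alpha}}$ remains bounded on $[0,T^*)$, the solution extends past $T^*$, contradicting the maximality of the lifespan. The heart of the matter is therefore an \emph{a priori} estimate: if $\sup_{t\in[0,T]}\lA \eta(t)\rA_{C^{1,\alpha}}\le M$, then $\lA\eta(t)\rA_{H^s}$ stays bounded on $[0,T]$ by a constant depending only on $M$, $s$, and $\lA\eta(0)\rA_{H^s}$. Once this is in hand, a classical blow-up alternative (of the type systematically derived by Taylor from paradifferential calculus, as recalled in the excerpt) gives the dichotomy: either $T^*=+\infty$ or the $C^{1,\alpha}$ norm blows up.

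First I would set up the energy estimate for the quasilinearized equation~\eqref{N21} satisfied by $V=-(\nabla_x\phi)\arrowvert_{y=\eta}$. The key structural facts are all supplied earlier in the excerpt: the equation is parabolic because $a>0$ (the Hopf--Zaremba lemma), the transport term $A=V\cdot\nabla$ is subprincipal since $A+A^*$ has order $1-\eps$ whenever $V\in C^{0,\eps}$, and the Dirichlet-to-Neumann operator paralinearizes as $G(\eta)=T_\lambda + R(\eta)$ with $R(\eta)$ gaining $\eps$ derivatives in the range $s>1+d/2$ (the last Lemma of the section). Applying $\la D_x\ra^{s-1}$ (or a suitable paradifferential elliptic operator of that order) to~\eqref{N21}, pairing with $\la D_x\ra^{s-1}V$ in $L^2$, and using the positivity of $a$ to extract parabolic smoothing, the principal contribution $\lA V\rA_{\dot H^{s-\mez}}^2$ is produced with a favorable sign; every other term (the transport term, the zeroth-order term $\gamma/a$, the commutators coming from paralinearization) is then absorbed by this gain together with interpolation, the constants depending only on $\lA\eta\rA_{C^{1,\alpha}}$ through the elliptic estimates for $\phi$. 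This yields $\fract \lA V(t)\rA_{H^{s-1}}^2 + c\lA V(t)\rA_{H^{s-\mez}}^2 \le C(M)\bigl(1+\lA V(t)\rA_{H^{s-1}}^2\bigr)$, hence a bound on $V$ in $L^\infty_t H^{s-1}\cap L^2_t H^{s-\mez}$, and then on $\eta$ in $L^\infty_t H^s$ via $\partial_t\eta = -G(\eta)\eta = -G(\eta)\psi$ with $\psi=-\eta$ and the relation between $V$, $B$ and the traces of $\phi$.

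The main obstacle is the low-regularity bookkeeping: at the threshold $s>1+d/2$ one does \emph{not} have $V\in W^{1,\infty}$, so $A+A^*$ is genuinely of positive order $1-\eps$, and one must be careful that the parabolic gain of one full derivative strictly dominates this loss. Concretely, $\eta\in C^{1,\alpha}$ supplies the modulus of continuity needed to take $\eps<\alpha$ (after controlling $V\in C^{0,\eps}$ in terms of $\lA\eta\rA_{C^{1,\alpha}}$ via the elliptic regularity of the harmonic extension in a $C^{1,\alpha}$ domain), so that $1-\eps > $ the order of the transport term by a fixed margin; the interpolation absorbing the subprincipal and remainder terms then closes. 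A secondary technical point is that all the constants $C(\lA\eta\rA_{C^\gamma})$ appearing in the paralinearization statements must be traced back to depend only on the $C^{1,\alpha}$ norm, which is exactly what the $s>1$ microlocal description of $G(\eta)$ from \cite{ABZ3} (the last Lemma above) is designed to provide. Given the \emph{a priori} bound, the continuation argument is routine: a local existence time depending only on $\lA\eta\rA_{H^s}$ combined with the uniform $H^s$ bound on $[0,T^*)$ forces $T^*=+\infty$ unless $\limsup_{t\to T^*}\lA\eta(t)\rA_{C^{1,\alpha}}=+\infty$.
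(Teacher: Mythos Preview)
The paper does not actually prove this lemma: it is stated without proof, with only the parenthetical remark that such blow-up criteria follow from the paradifferential approach along the lines systematically developed by Taylor~\cite{MR1121019,MR2744149}. Your proposal is precisely an instantiation of that method---continuation argument reduced to an \emph{a priori} $H^s$-estimate whose constants depend only on the $C^{1,\alpha}$ norm---so it is fully in line with what the paper indicates, and the outline is correct.

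One point deserves care. The paralinearization lemma quoted just above the blow-up statement has constants of the form $C(\lA\eta\rA_{H^s})$, not $C(\lA\eta\rA_{C^{1,\alpha}})$; as written it cannot close the continuation argument, since it would make the $H^s$-estimate depend on the very quantity you are trying to bound. What is actually needed (and what you correctly flag as the ``secondary technical point'') is a \emph{tame} version: an estimate linear in the high Sobolev norm with constants depending only on a low H\"older norm of $\eta$. Such tame estimates are indeed available in the references the paper cites (notably \cite{ABZ3,AMS}), but they are not the statement reproduced in this survey, so your sentence ``which is exactly what the $s>1$ microlocal description \ldots is designed to provide'' slightly overstates what the quoted lemma gives you. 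The fix is routine---replace the quoted lemma by its tame counterpart from the original sources---but it is worth being explicit that this is where the real work lies.
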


Furthermore, by using completely different techniques (namely using the Boundary Harnack Inequality), in \cite{A-Koch-2023} it is proved that one can control some H\"older norm of the slope in terms of its $L^\infty$-norm and the distance from the initial surface. Since the latter are easily bounded (by maximum principles and comparison arguments), we conclude that the Cauchy problem is well-posed globally in time.

\begin{theorem}[from \cite{A-Koch-2023}]
Consider an integer 
$d\ge 1$ and a real number $s>d/2+1$. 
For any initial data $\eta_0$ in $H^s(\xR^d)$, the Cauchy problem
\begin{equation*}
\partial_{t}\eta+G(\eta)\eta=0,
\quad \eta\arrowvert_{t=0}=\eta_0,
\end{equation*}
has a unique global solution 
\begin{equation*}
\eta\in C^0([0,+\infty);H^s(\xR^d))\cap C^\infty((0,+\infty)\times \xR^d).
\end{equation*}
\end{theorem}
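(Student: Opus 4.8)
The strategy combines three ingredients: local well-posedness on $H^s$ (Theorem~\ref{T16}), the blow-up criterion of Lemma~\ref{P:Cauchy}, and a quantitative H\"older regularization estimate controlling $\lA\eta(t)\rA_{C^{1,\alpha}}$ by data already known to be bounded. First I would recall that Theorem~\ref{T16} produces, for $\eta_0\in H^s(\xR^d)$ with $s>d/2+1$, a unique maximal solution $\eta\in C^0([0,T^*);H^s)$, with the smoothing $\eta\in C^\infty((0,T^*)\times\xR^d)$ coming from the parabolic nature of the equation: once $G(\eta)$ is paralinearized as $T_\lambda+$ (lower order), equation $\partial_t\eta+G(\eta)\eta=0$ is a nonlinear parabolic equation with a dissipative leading part $-T_\lambda$ of order $1$, so a standard parabolic bootstrap upgrades any $H^s$ solution to $C^\infty$ in positive time. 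By Lemma~\ref{P:Cauchy}, it then suffices to show that for every $\alpha\in(0,1)$, $\lA\eta(t)\rA_{C^{1,\alpha}(\xR^d)}$ stays finite as $t\to T^*$; this forces $T^*=+\infty$, and the regularity class in the statement follows.

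The heart of the argument is the a priori bound on $\lA\eta(t)\rA_{C^{1,\alpha}}$. Here I would invoke the two facts the excerpt explicitly supplies. On one hand, the $L^\infty$ norm and a crude geometric quantity (the distance between the moving surface and the initial one, or equivalently $\lA\eta(t)-\eta_0\rA_{L^\infty}$) are controlled for all time by maximum-principle and comparison arguments: $\partial_t\eta=-G(\eta)\eta$ together with the Hopf--Zaremba sign $a=1-(\partial_y\phi)|_{y=\eta}>0$ (the positivity lemma, analogue of Wu's result) makes the equation order-preserving, so $\lA\eta(t)\rA_{L^\infty}\le\lA\eta_0\rA_{L^\infty}$ and comparison with stationary solutions pins down the distance term. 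On the other hand, the Boundary Harnack Inequality input of~\cite{A-Koch-2023} converts these scale-invariant bounds into an interior H\"older estimate on the \emph{slope} $\nabla\eta$: schematically, there is $\alpha\in(0,1)$ and a nondecreasing $C$ with
\[
\lA\eta(t)\rA_{C^{1,\alpha}(\xR^d)}\le C\bigl(\lA\eta_0\rA_{L^\infty},\ \dist(\Sigma(t),\Sigma(0)),\ t\bigr),
\]
uniformly on compact time intervals. Since the right-hand side is finite up to $T^*$ (indeed up to any finite time, by the maximum-principle bounds), the blow-up criterion~\eqref{n110} cannot trigger, and hence $T^*=+\infty$.

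Finally I would assemble the global solution. For each finite $T$, the $H^s$ solution exists on $[0,T]$ by the non-blow-up above, is unique by the uniqueness part of Theorem~\ref{T16}, and lies in $C^\infty((0,T]\times\xR^d)$ by the parabolic smoothing; letting $T\to\infty$ and using uniqueness to glue gives $\eta\in C^0([0,+\infty);H^s(\xR^d))\cap C^\infty((0,+\infty)\times\xR^d)$, as claimed. The main obstacle — and the only genuinely hard point — is the quantitative H\"older bound on the slope: controlling a $C^{1,\alpha}$ seminorm by merely $L^\infty$-type data is false for general degenerate or fully nonlinear parabolic problems, and it is precisely the one-phase/Hele-Shaw structure (Darcy's law, the sign $a>0$, and the Boundary Harnack Inequality for the associated harmonic function $y-\phi$) that makes it work; everything else is soft, relying on results already stated in the excerpt. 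One should also check a mild compatibility issue, namely that the H\"older exponent $\alpha$ furnished by the Boundary Harnack estimate is admissible in Lemma~\ref{P:Cauchy} (which holds for every $\alpha\in(0,1)$), so there is no mismatch.
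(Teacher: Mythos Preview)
Your proposal is correct and follows essentially the same approach as the paper's sketch: combine the local theory (Theorem~\ref{T16}) with the blow-up criterion (Lemma~\ref{P:Cauchy}), bound the $L^\infty$-norm of the slope and the displacement from the initial surface by maximum-principle and comparison arguments, and then invoke the Boundary Harnack Inequality from~\cite{A-Koch-2023} to upgrade these to a $C^{1,\alpha}$ bound that rules out~\eqref{n110}. The paper's account is only an outline, and your expansion of it---including the parabolic-smoothing bootstrap for the $C^\infty$ regularity and the compatibility check on the H\"older exponent---is faithful to that outline.
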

\subsection{Reduction to constant coefficients}

Oversimplifying, one can think of the water-wave equations with surface tension 
as
$$
{Pu=\frac{\partial u} {\partial t } +V(u)\partial_x u
+ i\D^\tq\big(c(u)\D^\tq u\big) = 0} 
$$
where $x\in\xT$ and 
$$
V(u)=\RE \left(\langle D_x\rangle^{-N}u\right)
$$
with $N$ as large as we want (for smooth enough initial data). 
This is a toy model that mimics many features of the 
water-wave equations once written in terms of $\eta$ and the 
good unknown $\omega$ (see~\cite{ABZ1}). 
In this section, the goal is to explain the role of two other transformations (introduced in \cite{AB} and further used in \cite{ABHK}) which can be used 
to further simplify the 
equations.
 
Firstly, one may use a change of variables to flatten the coefficients. Namely, 
we introduce in \cite{AB} a change of variables preserving the $L^2$-norm in $x$, 
of the form
\begin{equation}\label{N20}
h(t,x)\mapsto (1 + \partial_x \kappa(t,x))^{\frac12} \, h(t,x + \kappa(t,x))
\end{equation}
which allows to replace $P$ by 
$$
{Q = \partial_t + W \partial_x + i \D^\tdm + R},\quad\text{$R$ is of order zero}
$$
where one can further assume that $\int_{\xT} W(t,x)\, dx=0$. 
Notice that this is nontrivial since the equation is nonlocal and since it implicitly includes a 
{cancellation} of the term of order $1/2$ (this is the effect of considering 
a change of variables that preserves the $L^2$-norm, namely the 
prefactor $(1 + \partial_x \kappa(t,x))^{\frac12}$ in \eqref{N20}). 

To study ${\partial_t + W\partial_x + i \D^\tdm+R'}$, we then 
seek an operator $A$ such that 
$$
\big[ A,i\D^\tdm\big]+W\partial_x A \quad\text{is a zero order operator}.
$$ 
In \cite{AB}, we find an operator of the form
$$
A=\Op\big(q(t,x,\xi)e^{i\beta(t,x)|\xi|^\mez}\big)
$$
with
$$
\beta=\beta_0(t)+\frac{2}{3}\partial_x^{-1}W.
$$
Then
$$
{\big(\partial_t +W\partial_x+i\D^\tdm\big) A  
=A   \big(   \partial_t +i\D^\tdm+R''\big)}
$$
with $R''$ of order $0$.

It is worth noting that $A$ belongs to $\Op S^0_{\rho,\rho}$ with $\rho=1/2$, known as the exotic class (refer to Coifman and Meyer~\cite{CoMeAs}). 
The fact that $A$ belongs to this symbol class without symbolic calculus reflects the fact that 
the problem is quasi-linear (\cite{Ayman2}). In contrast, for the Benjamin-Ono equation, a similar conjugation is observed with $A\in \Op S^0_{1,0}$.

\section{The Muskat problem}

The Muskat equation serves as a two-fluid model in porous media, analogous to the Hele-Shaw equation. In this context, we examine a time-dependent free surface $\Sigma(t)$ that delineates two fluid domains, denoted as $\Omega_1(t)$ and $\Omega_2(t)$. For simplicity, we restrict our consideration to two-dimensional fluids, so that
\begin{align*}
\Omega_1(t)&=\left\{ (x,y)\in \xR\times \xR\,;\, y>f(t,x)\right\},\\
\Omega_2(t)&=\left\{ (x,y)\in \xR\times \xR\,;\, y<f(t,x)\right\},\\
\Sigma(t)&=\partial \Omega_1(t)=\partial \Omega_2(t)=\{y=f(t,x)\}.
\end{align*}
Let us introduce the density $\rho_i$, the velocity $v_i$, and the pressure $P_i$ in the domain $\Omega_i$ for $i=1,2$. It is assumed that the velocities $v_1$ and $v_2$ adhere to Darcy's law. The equations governing the motion are then given by:
\begin{alignat*}{3}
v_i&=-\nabla (P_i+\rho_i g y) \qquad&&\text{in }&&\Omega_i,\\
\cn v_i&=0 && \text{in }&&\Omega_i,\\
P_1&=P_2 &&\text{on }&&\Sigma,\\
v_1\cdot n &=v_2\cdot n  &&\text{on }&&\Sigma,\\
\partial_t f&=\sqrt{1+(\partial_x f)^2}\, v_2 \cdot n && &&
\end{alignat*}
where $g$ is the gravity and $n$ is the outward unit normal to $\Omega_2$ on $\Sigma$,
$$
n=\frac{1}{\sqrt{1+(\partial_x f)^2}} \begin{pmatrix} -\partial_x f \\ 1\end{pmatrix}.
$$

\subsection{The C\'ordoba-Gancedo formulation}

It has been long recognized that the Muskat problem can be effectively reduced to a parabolic evolution equation for the unknown function $f$ (see~\cite{CaOrSi-SIAM90,EsSi-ADE97,PrSi-book,SCH2004}). The formulation~\eqref{Muskat} used in the subsequent discussion stems from the work of C\'ordoba and Gancedo~\cite{CG-CMP}. In their study of this problem employing contour integrals, they achieved a beautiful reformulation of the Muskat equation in terms of finite differences. 

To be more specific, they derived the following concise formulation of the Muskat equation:
\begin{align}\label{Muskat}
\partial_tf=\frac{\rho}{2\pi}\int_\xR\frac{\partial_x\Delta_\alpha f}{1+\left(\Delta_\alpha f\right)^2}\dalpha,
\end{align}
where $\rho=\rho_2-\rho_1$ is the difference of the densities 
of the two fluids and $\Delta_\alpha f$ is the slope
\begin{align}\label{eq2.2}
\Delta_\alpha f(t,x)=\frac{f(t,x)-f(t,x-\alpha)}{\alpha}\cdot
\end{align}

We assume $\rho_2 > \rho_1$ (which means that the heavier fluid is below the lighter one), allowing us to set $\rho = 2$ without loss of generality. The well-defined nature of the right-hand side is not immediately apparent, and we 
will address this question in the subsequent part of this section.

With the current formulation, it is readily verified that the problem 
is invariant by the following scaling :
\[
f(t,x) \mapsto \frac{1}{\lambda}f\left(\lambda t, \lambda x\right).
\]
Consequently, the two natural critical spaces are the following homogeneous spaces: $\dot{H}^{\frac{3}{2}}(\mathbb{R})$ and $\dot{W}^{1,\infty}(\mathbb{R})$. 

Utilizing this formulation, Cameron~\cite{Cameron} established the existence of a modulus of continuity for the derivative, resulting in a global existence theorem with the sole requirement that the product of the maximal and minimal slopes remains bounded by 1. More recently, Abedin and Schwab also derived a modulus of continuity in \cite{Abedin-Schwab-2020} through Krylov-Safonov estimates. Additionally, C\'ordoba and Lazar, employing a novel formulation of the Muskat equation involving oscillatory integrals \cite{Cordoba-Lazar-H3/2}, demonstrated the global well-posedness of the Muskat equation in time. This holds under conditions that the initial data is suitably smooth and that the $\dot H^{3/2}(\mathbb{R})$-norm is sufficiently small (extended to the 3D case in \cite{Gancedo-Lazar-H2}). We will explain in this section a sharp result, proved with Quoc-Hung Nguyen (\cite{AN3,MR4387237}) which established that the Cauchy problem is well-posed on the endpoint Sobolev space $H^{3/2}(\mathbb{R})$, which is optimal concerning the equation's scaling. Indeed, blow-up results for certain sufficiently large data were demonstrated by Castro, C\'ordoba, Fefferman, Gancedo, and L\'opez-Fern\'andez \cite{CCFG-ARMA-2013,CCFG-ARMA-2016,CCFGLF-Annals-2012}--a contrast to the findings presented in Theorem~\ref{T21} for the Hele-Shaw equation, which is the one-phase version of the Muskat problem.

\subsection{The nonlinearity}

Besides its esthetic aspect, 
the formulation~\eqref{Muskat} allows 
to apply tools at interface of nonlinear partial differential equations and harmonic analysis. 
For instance, one may think of the methods centering around the study of the Hilbert transform~$\mathcal{H}$ 
and Riesz potentials, 
or Besov and Triebel-Lizorkin spaces. 
Recall that the Hilbert transform $\mathcal{H}$ 
is defined by
$$
\widehat{\mathcal{H} u}(\xi)=-i\frac{\xi}{\la \xi\ra} \hat{u}(\xi).
$$
It follows that the fractional Laplacian $\D=(-\partial_{xx})^{\mez}$ satisfies 
$$
\la D_x\ra=\partial_x\mathcal{H}.
$$ 
Alternatively, $\mathcal{H}$ 
can be defined by a singular integral:
\begin{equation}\label{Hilbert:SI}
\mathcal{H}f(x)=\frac{1}{\pi}\mathrm{pv}\int_\xR \frac{f(y)}{x-y}\dy,
\end{equation}
that is
\begin{align*}
\mathcal{H}f(x)
&=\frac{1}{\pi}\mathrm{pv}\int_\xR \frac{f(y)}{x-y}\dy
=\frac{1}{\pi}\lim_{\eps\to 0} \int_{\eps<\la y\ra <\frac1\eps}\frac{f(x-y)}{y}\dy\\
&=-\frac{1}{\pi}
\pv\int_\xR \frac{f(x)-f(x-\alpha)}{\alpha}\dalpha
=-\frac{1}{\pi}
\pv\int_\xR\Delta_\alpha f\dalpha.
\end{align*}
It follows that the fractional Laplacian $\D=\partial_x \mathcal{H}$ satisfies 
$$
\D f =-\frac{1}{\pi}\pv \int_\xR \partial_x\Delta_\alpha f\dalpha.
$$
Consequently,
by writing 
$$
\frac{\partial_x \Delta_\a  f }{1+\left(\Delta_\alpha f\right)^2}=\partial_x\Delta_\a  f-(\partial_x\Delta_\a  f)\frac{(\Delta_\a f)^2 }{1+\left(\Delta_\alpha f\right)^2},
$$
we see that the Muskat equation can be written under the form
$$
\partial_tf+\D f=\mathcal{T}(f)f\quad \text{where}\quad \mathcal{T}(f)f=-\frac{1}{\pi}\, \int_\xR(\partial_x\Delta_\alpha f)\frac{(\Delta_\a f)^2 }{1+\left(\Delta_\alpha f\right)^2}\dalpha.
$$
The next proposition states 
that the map $f\mapsto \mathcal{T}(f)f$ is locally Lipschitz from 
$H^{\tdm}(\xR)$ to $L^2(\xR)$.

\begin{proposition}\label{P:continuity}
Consider the operator
$$
\mathcal{T}(f)g=-\frac{1}{\pi}\int_\xR \Delta_\alpha g_x \ \frac{(\Delta_\alpha f)^2}{1+(\Delta_\alpha f)^2}\dalpha,
$$
where $g_x\defn \partial_x g$.
\begin{enumerate}
\item\label{Prop:low1} For all $f$ in $H^{1}(\xR)$ and all $g$ in $H^{\tdm}(\xR)$, 
the function
$$
\alpha\mapsto \Delta_\alpha g_x \ \frac{(\Delta_\alpha f)^2}{1+(\Delta_\alpha f)^2}
$$
belongs to $L^1_\alpha(\xR;L^2_x(\xR))$. Consequently, $\mathcal{T}(f)g$ belongs to $L^2(\xR)$. 
Moreover, there is a constant $C$ such that
\begin{equation}\label{nTL2}
\lA \mathcal{T}(f)g\rA_{L^2}\le C \lA f\rA_{\dot{H}^1}\lA g\rA_{\dot{H}^\tdm}.
\end{equation}

\item \label{Prop:low3} There exists a constant $C>0$ such that, 
for all functions $f_1,f_2$ in $H^{1}(\xR)$ and for all 
$g$ in $H^\tdm(\xR)$,
$$
\lA (\mathcal{T}(f_1)-\mathcal{T}(f_2))g\rA_{L^2}
\le C \lA f_1-f_2\rA_{\dot{H}^{1}}\lA g\rA_{\dot{H}^{\tdm}}.
$$

\item \label{Prop:low2} The map $f\mapsto \mathcal{T}(f)f$ is locally Lipschitz from 
$H^{\tdm}(\xR)$ to $L^2(\xR)$.
\end{enumerate}
\end{proposition}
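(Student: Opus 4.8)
The three statements build on one another, so the plan is to prove~(\ref{Prop:low1}) first, deduce~(\ref{Prop:low3}) from it via an algebraic identity, and then obtain~(\ref{Prop:low2}) from~(\ref{Prop:low1})--(\ref{Prop:low3}) by telescoping; the heart of the matter is the bilinear estimate~\eqref{nTL2}.

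For~(\ref{Prop:low1}) I would put the $L^2_x$ norm inside the $\alpha$-integral by Minkowski's inequality and then estimate, for each fixed $\alpha$, by H\"older in $x$:
\[
\lA \Delta_\alpha g_x\cdot\tfrac{(\Delta_\alpha f)^2}{1+(\Delta_\alpha f)^2}\rA_{L^2_x}\le \lA \Delta_\alpha g_x\rA_{L^2_x}\,\lA \tfrac{(\Delta_\alpha f)^2}{1+(\Delta_\alpha f)^2}\rA_{L^\infty_x}.
\]
The first factor is handled by Plancherel: since $|1-e^{i\alpha\xi}|^2\le 2|\alpha\xi|$ one gets $\lA \Delta_\alpha g_x\rA_{L^2}\les |\alpha|^{-\mez}\lA g\rA_{\dot H^{\tdm}}$. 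For the second factor I would use $\tfrac{t^2}{1+t^2}\le\min(1,t^2)$ together with the one-dimensional embedding $\dot H^1(\xR)\hookrightarrow\dot C^{\mez}(\xR)$ --- that is $|f(x)-f(x-\alpha)|\le |\alpha|^{\mez}\lA f\rA_{\dot H^1}$ by Cauchy--Schwarz --- which gives $\lA \Delta_\alpha f\rA_{L^\infty}\le|\alpha|^{-\mez}\lA f\rA_{\dot H^1}$, hence a bound by $\min(1,|\alpha|^{-1}\lA f\rA_{\dot H^1}^2)$. Writing $M=\lA f\rA_{\dot H^1}^2$, the remaining one-variable integral $\int_\xR|\alpha|^{-\mez}\min(1,M|\alpha|^{-1})\dalpha=8M^{\mez}$ is finite --- the integrand is $\sim|\alpha|^{-\mez}$ near $0$ and $\sim M|\alpha|^{-\tdm}$ near $\infty$ --- which simultaneously proves the $L^1_\alpha(L^2_x)$ membership and the bound $\lA\mathcal{T}(f)g\rA_{L^2}\le C\lA f\rA_{\dot H^1}\lA g\rA_{\dot H^{\tdm}}$.

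Granting~(\ref{Prop:low1})--(\ref{Prop:low3}), part~(\ref{Prop:low2}) follows at once by telescoping $\mathcal{T}(f_1)f_1-\mathcal{T}(f_2)f_2=\mathcal{T}(f_1)(f_1-f_2)+(\mathcal{T}(f_1)-\mathcal{T}(f_2))f_2$ and bounding the first term by~(\ref{Prop:low1}) and the second by~(\ref{Prop:low3}): one gets $\lA\mathcal{T}(f_1)f_1-\mathcal{T}(f_2)f_2\rA_{L^2}\le C\big(\lA f_1\rA_{H^{\tdm}}+\lA f_2\rA_{H^{\tdm}}\big)\lA f_1-f_2\rA_{H^{\tdm}}$, since the $\dot H^1$ and $\dot H^{\tdm}$ norms are controlled by the $H^{\tdm}$ norm. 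For~(\ref{Prop:low3}) I would start from $\tfrac{a^2}{1+a^2}-\tfrac{b^2}{1+b^2}=\tfrac{(a-b)(a+b)}{(1+a^2)(1+b^2)}$ with $a=\Delta_\alpha f_1$ and $b=\Delta_\alpha f_2$, noting that $a-b=\Delta_\alpha(f_1-f_2)$ and that the remaining factor $\Psi_\alpha\defn\tfrac{\Delta_\alpha(f_1+f_2)}{(1+(\Delta_\alpha f_1)^2)(1+(\Delta_\alpha f_2)^2)}$ satisfies $\lA\Psi_\alpha\rA_{L^\infty_x}\le1$; thus $(\mathcal{T}(f_1)-\mathcal{T}(f_2))g=-\tfrac1\pi\int_\xR (\Delta_\alpha g_x)(\Delta_\alpha(f_1-f_2))\,\Psi_\alpha\dalpha$, and one is reduced to a bilinear estimate. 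The relevant model is the weight-free operator $\int_\xR (\Delta_\alpha g_x)(\Delta_\alpha h)\dalpha$: a direct computation shows it is the bilinear Fourier multiplier with symbol $i\pi\,\xi\big(\la\xi\ra+\la\eta\ra-\la\xi+\eta\ra\big)$ --- a Coifman--Meyer symbol supported on frequencies of opposite sign --- hence bounded from $\dot H^{\tdm}\times\dot H^1$ to $L^2$ by a Littlewood--Paley (paraproduct) argument, and the same is then true of its product with any fixed $L^\infty$ weight.

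The main obstacle is precisely the $\alpha$-dependence of the weight $\Psi_\alpha$ in~(\ref{Prop:low3}). A crude size estimate --- $\Delta_\alpha g_x$ in $L^2_x$, $\Delta_\alpha(f_1-f_2)$ in $L^\infty_x$, each of order $|\alpha|^{-\mez}$, and $\Psi_\alpha$ in $L^\infty_x$ by $1$ --- only yields an $|\alpha|^{-1}$ bound, which is logarithmically divergent at both $0$ and $\infty$; unlike in~(\ref{Prop:low1}), size alone does not suffice and one has to use cancellation in the $\alpha$-average. The natural route is to write $\Psi_\alpha=\Psi_0+(\Psi_\alpha-\Psi_0)$ with $\Psi_0=\lim_{\alpha\to0}\Psi_\alpha=\tfrac{f_1'+f_2'}{(1+(f_1')^2)(1+(f_2')^2)}\in L^\infty$: the $\Psi_0$-contribution is handled by the Coifman--Meyer bilinear estimate above (with a scale-invariant constant, since $\lA\Psi_0\rA_{L^\infty}\le1$), while for the remainder one must exploit the $\alpha$-regularity of the finite differences defining $\Psi_\alpha$ --- again via $\dot H^1\hookrightarrow\dot C^{\mez}$ and Plancherel --- to recover integrability in $\alpha$ without destroying the norms. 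I expect this last step --- a genuinely bilinear, paradifferential-flavoured estimate at the critical $\dot H^{\tdm}$ threshold rather than a pointwise manipulation --- to be the technical core, and it is exactly where harmonic-analytic tools of the kind already used in the paper (commutators of the Hilbert transform, Coifman--Meyer multipliers) come into play.
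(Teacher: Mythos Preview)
Your argument for~(\ref{Prop:low1}) is correct, and the telescoping reduction of~(\ref{Prop:low2}) to~(\ref{Prop:low1})--(\ref{Prop:low3}) is fine. The trouble is in~(\ref{Prop:low3}). You are right that the crude size bounds give a non-integrable $|\alpha|^{-1}$, but the proposed cure --- splitting $\Psi_\alpha=\Psi_0+(\Psi_\alpha-\Psi_0)$ and exploiting ``$\alpha$-regularity'' of the remainder --- cannot yield the stated inequality. Any quantitative smallness of $\Psi_\alpha-\Psi_0$ hinges on the rate at which $\Delta_\alpha f_i\to f_i'$, and for $f_i$ merely in $H^1$ there is no uniform such rate; whatever decay you extract will carry a constant depending on $f_1,f_2$ individually, contradicting the uniform-in-$(f_1,f_2)$ constant $C$ required by the statement. (As a side remark, calling the weight-free bilinear symbol Coifman--Meyer is imprecise: after normalizing by $|\xi|^{-3/2}|\eta|^{-1}$ it behaves like $|\xi|^{-1/2}$ on $\{|\eta|\le|\xi|\}$ and is unbounded; the boundedness $\dot H^{3/2}\times\dot H^1\to L^2$ does hold, but not by a straight Coifman--Meyer application.)

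The route the paper points to --- the remark on the Gagliardo semi-norm~\eqref{Gagliardo} right after the proposition --- is much shorter and avoids splitting the weight altogether. Since $\bigl|\tfrac{a^2}{1+a^2}-\tfrac{b^2}{1+b^2}\bigr|\le|a-b|$, Cauchy--Schwarz in $\alpha$ (pointwise in $x$) gives
\[
\bigl|(\mathcal{T}(f_1)-\mathcal{T}(f_2))g(x)\bigr|\le\frac{1}{\pi}\Bigl(\int_\xR|\Delta_\alpha g_x(x)|^2\dalpha\Bigr)^{\mez}\Bigl(\int_\xR|\Delta_\alpha h(x)|^2\dalpha\Bigr)^{\mez},\qquad h\defn f_1-f_2.
\]
Hardy's inequality applied to $\Delta_\alpha h(x)=\alpha^{-1}\int_0^\alpha h'(x-t)\dt$ (and the analogous formula for $\alpha<0$) bounds the second factor by $2\lA h\rA_{\dot H^1}$ \emph{uniformly in $x$}; the $L^2_x$-norm of the first factor is exactly the Gagliardo integral for $g_x$ at $s=\tfrac12$, hence $\sim\lA g\rA_{\dot H^{3/2}}$. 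This closes~(\ref{Prop:low3}) using only $|\Psi_\alpha|\le 1$, with no bilinear-multiplier machinery --- and, incidentally, also proves the weight-free boundedness you were after.
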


The previous proposition is 
an example of properties that are very simple to prove using 
the definition of Sobolev spaces in terms of finite differences. In particular, it relies on the fact that, 
for $s\in (0,1)$, 
the homogeneous Sobolev norm $\lA\cdot\rA_{\dot{H}^s}$ is equivalent to 
the Gagliardo semi-norm
\begin{equation}\label{Gagliardo}
\lA u\rA_{\dot H^{s}}^2\defn \frac{1}{4\pi c(s)}
\underset{\xR\times\xR}\iint \frac{\la u(x)-u(y)\ra^2}{\la x-y\ra^{2s}}\frac{\dx\dy}{\la x-y\ra}
\quad\text{with}\quad 
c(s)=\int_\xR \frac{1-\cos(h)}{\la h\ra^{1+2s}}\dh.
\end{equation}

Using the previous point of view, one can give an elementary proof of 
a paralinearization formula for the nonlinearity $\mathcal{T}(f)f$. 

The goal here is 
decompose the nonlinearity 
 into several pieces having different roles.  Since it is a problem in fluid dynamics, one expects to extract from the nonlinearity at least two terms: 
 a convective term of the form $V\partial_x f$ and 
 an elliptic component of the form $\gamma \D f$, that is an equation of the form
$$
{\partial_t f+V\partial_x f+\gamma \D f=R(f)}
$$
for some coefficients $V$ and $\gamma$ depending on $\partial_x f$.

To reach this goal, a standard strategy is to use a paradifferential analysis. 
For the Muskat equation, 
 this idea was implemented independently in~\cite{A-Lazar} 
 and \cite{Nguyen-Pausader}. These two approaches are very different: the approach in \cite{Nguyen-Pausader} applies for many physical equations, while 
 the approach in \cite{A-Lazar} is adapted to study critical problem, as we discuss below.

To do this, following {Shnirelman} (\cite{Shnirelman}),  
we consider a simpler version of paraproducts.

\begin{proposition}[\cite{A-Lazar}]
\label{P:A-Lazar}
Let $0<\eps<1/2$. Given a bounded function $a=a(x)$, denote by 
$\tilde{T}_a\colon {H}^{1+\eps}(\xR)\rightarrow {H}^{1+\eps}(\xR)$ the paraproduct-type operator 
defined by
$$
{\tilde{T}_ag=\Lambda^{-(1+\eps)}(a\Lambda^{1+\eps}g)} \quad, \quad \Lambda=(I-\partial_{xx})^{1/2}.
$$
Then
$$
{\mathcal{T}(f)f=\tilde{T}_{\frac{f_x^2}{1+f_x^2}}\la D_x\ra f+\tilde{T}_{V(f)}\partial_x f+
R_\eps(f)}
$$
where $R_\eps$ is of order $<1$:
$$
\lA R_\eps(f)\rA_{H^{1+\eps}}\le \mathcal{F}\big(\lA f\rA_{{H}^{\tdm+\eps}}\big)
\lA f\rA_{{H}^{\frac{3}{2}+\eps}}
\lA f \rA_{H^{2+{\frac{\eps}{2}}}}.
$$
\end{proposition}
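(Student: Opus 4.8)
The plan is to prove Proposition~\ref{P:A-Lazar} by a direct computation exploiting the finite-difference representation of $\mathcal{T}(f)f$ together with the simple structure of Shnirelman's paraproduct $\tilde{T}_a = \Lambda^{-(1+\eps)}(a\,\Lambda^{1+\eps}\,\cdot\,)$. First I would recall the identity already established in the excerpt,
\[
\mathcal{T}(f)f = -\frac{1}{\pi}\int_\xR (\partial_x \Delta_\alpha f)\,\frac{(\Delta_\alpha f)^2}{1+(\Delta_\alpha f)^2}\dalpha,
\]
and freeze the coefficient: write $\Delta_\alpha f(x)^2 = f_x(x)^2 + \bigl(\Delta_\alpha f(x)^2 - f_x(x)^2\bigr)$, so that the leading contribution is $-\frac{1}{\pi}\,\frac{f_x^2}{1+f_x^2}\int_\xR \partial_x\Delta_\alpha f\dalpha = \frac{f_x^2}{1+f_x^2}\,\la D_x\ra f$, using $\la D_x\ra f = -\frac1\pi\,\mathrm{pv}\int \partial_x \Delta_\alpha f\dalpha$. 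The point of Shnirelman's paraproduct is that it lets us pass from the pointwise product $\frac{f_x^2}{1+f_x^2}\la D_x\ra f$ to $\tilde{T}_{f_x^2/(1+f_x^2)}\la D_x\ra f$ at the cost of a commutator $[\,\cdot\,,\Lambda^{1+\eps}]$-type term which, by the standard commutator estimate in the exotic/low-regularity setting (Coifman--Meyer, or the elementary finite-difference version in \cite{A-Lazar}), is of order $<1$ and hence absorbable into $R_\eps(f)$.

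Next I would isolate the convective term. The remainder after freezing, namely $-\frac{1}{\pi}\int_\xR (\partial_x\Delta_\alpha f)\,\bigl[\frac{(\Delta_\alpha f)^2}{1+(\Delta_\alpha f)^2} - \frac{f_x^2}{1+f_x^2}\bigr]\dalpha$, still contains a term of order $1$ because $\partial_x\Delta_\alpha f$ is of order $1$ and the bracket vanishes only to first order at $\alpha=0$. Expanding $\Delta_\alpha f(x) = f_x(x) - \mez \alpha f_{xx}(x) + \dots$ (or, more robustly, using the exact finite-difference identity $\Delta_\alpha f(x) - f_x(x) = -\frac1\alpha\int_0^\alpha (f_x(x) - f_x(x-s))\,ds$ and a further finite difference), one identifies the genuinely order-$1$ remaining piece as a transport term $V(f)\,\partial_x f$ — or rather $\tilde{T}_{V(f)}\partial_x f$ after the same paraproduct reduction — with $V(f)$ an explicit nonlinear function of $f_x$ obtained by integrating the relevant kernel in $\alpha$; everything that is quadratic or higher in the finite differences is of order $<1$ in the sense of the stated estimate. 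The bookkeeping here is the analogue of extracting the two terms $V\partial_x f$ and $\gamma\D f$ advertised just before the proposition.

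The final and most delicate step is the remainder estimate
\[
\lA R_\eps(f)\rA_{H^{1+\eps}} \le \mathcal{F}\bigl(\lA f\rA_{H^{3/2+\eps}}\bigr)\,\lA f\rA_{H^{3/2+\eps}}\,\lA f\rA_{H^{2+\eps/2}}.
\]
This is where the finite-difference characterization of Sobolev norms~\eqref{Gagliardo} does the real work: each term in $R_\eps(f)$ is an integral over $\alpha$ of a product of finite differences of $f$ and $f_x$, and one estimates it in $H^{1+\eps}_x$ by commuting $\Lambda^{1+\eps}$ through, distributing the $\alpha$-weights according to how many derivatives and how many finite-difference factors each piece carries, and applying Minkowski's integral inequality in $\alpha$ together with fractional Leibniz/paraproduct bounds. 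The careful point — and the main obstacle — is to balance the powers of $\alpha$ so that the singular weight $|\alpha|^{-1}$ near $\alpha=0$ is tamed by the vanishing of the finite differences (each $\Delta_\alpha$ gains a power of $\alpha$ against one derivative, in the scale-invariant sense), producing exactly the regularity budget $H^{3/2+\eps}\cdot H^{2+\eps/2} \to H^{1+\eps}$ with a loss strictly below $1$; this is precisely the trilinear estimate that makes the scheme close at the critical regularity $\dot H^{3/2}$, and it is the heart of the argument in \cite{A-Lazar}. The large-$\alpha$ region is comparatively harmless and handled by the integrability of $|\alpha|^{-1}$ against the bounded factor $\frac{(\Delta_\alpha f)^2}{1+(\Delta_\alpha f)^2}$ once one term is paired with $\D^{-1}$ smoothing.
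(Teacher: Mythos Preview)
Your freezing of the coefficient to extract the elliptic term $\frac{f_x^2}{1+f_x^2}\la D_x\ra f$ is correct and matches the paper. The gap is in your mechanism for isolating the transport term. You propose to Taylor expand $\Delta_\alpha f$ about $f_x$ in the bracket and read off $V(f)\partial_x f$ from the first-order piece. That does not work: the leading term of the bracket is $\frac{2f_x}{(1+f_x^2)^2}(\Delta_\alpha f-f_x)$, and after multiplying by $\partial_x\Delta_\alpha f$ and integrating in $\alpha$ you do not obtain anything of the form $V\cdot\partial_x f$ with $V$ a function of $x$ alone --- the $\alpha$-integral simply does not separate. The coefficient $V$ is a genuinely nonlocal object, an integral over \emph{all} $\alpha$, and cannot be captured by a local expansion at $\alpha=0$; the ``exact finite-difference identity'' you mention does not repair this.

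The paper's device, which you are missing, is to split the coefficient $\frac{1}{1+(\Delta_\alpha f)^2}$ into its \emph{odd and even parts in $\alpha$},
\[
\mathcal{O}(\alpha)=\tfrac12\Bigl(\tfrac{1}{1+(\Delta_\alpha f)^2}-\tfrac{1}{1+(\Delta_{-\alpha}f)^2}\Bigr),\qquad
\mathcal{E}(\alpha)=\tfrac12\Bigl(\tfrac{1}{1+(\Delta_\alpha f)^2}+\tfrac{1}{1+(\Delta_{-\alpha}f)^2}\Bigr),
\]
and simultaneously to write $\partial_x\Delta_\alpha f=\frac{f_x(x)-f_x(x-\alpha)}{\alpha}$. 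Pairing $\frac{f_x(x)}{\alpha}$ with $\mathcal{E}$ gives an odd principal value and hence zero; pairing it with $\mathcal{O}$ gives exactly $V(f)\,\partial_x f$ with $V=-\frac{1}{\pi}\int\frac{\mathcal{O}(\alpha)}{\alpha}\dalpha$, whose integrand is even in $\alpha$ and regular at the origin. The elliptic term then arises from freezing $\mathcal{E}$ at $\frac{1}{1+f_x^2}$ in the $\frac{f_x(x-\alpha)}{\alpha}$ piece, and the remainder is explicitly $R=-\frac{1}{\pi}\int\frac{f_x(x-\alpha)}{\alpha}\bigl(\frac{1}{1+(\Delta_\alpha f)^2}-\frac{1}{1+f_x^2}\bigr)\dalpha$. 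This parity trick is the structural idea that makes the elliptic/transport/remainder split canonical and produces the closed formula for $V$. Your discussion of the passage from products to $\tilde T$-paraproducts and of the remainder estimate (Gagliardo norms, Minkowski in $\alpha$, balancing of $\la\alpha\ra$-weights) is in the right spirit, but the decomposition step itself needs the odd/even splitting, not a Taylor expansion.
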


To prove this proposition, 
the approach in~\cite{A-Lazar} 
exploits the formulation~\eqref{Muskat} of the Muskat equation to 
 give such a paradifferential decomposition in a direct manner. 
The desired decomposition of the nonlinearity is achieved by 
splitting the coefficient
into its odd and even components: 
\begin{align*}
\mathcal{O}\left(\alpha,\cdot\right)
&= \frac{1}{2}\bigg(\frac{1}{1+\left(\Delta_\alpha f\right)^2} -\frac{1}{1+\left(\Delta_{-\alpha} f\right)^2}\bigg),\\
\mathcal{E}\left(\alpha,\cdot\right) &= \frac{1}{2}\bigg(\frac{1}{1+\left(\Delta_\alpha f\right)^2} +
\frac{1}{1+\left(\Delta_{-\alpha} f\right)^2}\bigg).
\end{align*}

Since $\Delta_\alpha f(x)\to f_x(x)$ when $\alpha \to 0$, 
we further decompose $\mathcal{E}\left(\alpha,\cdot\right)$ 
as follows
$$
{\mathcal{E}\left(\alpha,\cdot\right) =\frac{1}{1+(\partial_xf)^2}+\left(\mathcal{E}\left(\alpha,\cdot\right) -\frac{1}{1+(\partial_xf)^2}\right)}.
$$
By considering that ${\displaystyle {\D f= -\frac{1}{\pi}\int_\xR\partial_x\Delta_\alpha f\dalpha}}$ 
we obtain the following decomposition of the nonlinearity:
\begin{equation}\label{newMuskat}
{\partial_t f+\frac{1}{1+(\partial_xf)^2}\D f+V(f)\partial_x f+R=0}
\end{equation}
with
\begin{align*}
V=-\frac{1}{\pi}\int_\xR\frac{\mathcal{O}\left(\alpha,\cdot\right)}{\alpha}\dalpha,\qquad 
R=-\frac{1}{\pi}\int_\xR \frac{f_x(\cdot-\alpha)}{\alpha} \left(\frac{1}{1+(\Delta_\alpha f)^2}-\frac{1}{1+f_x^2}\right)\dalpha.
\end{align*}

\subsection{Estimate of the critical norm}

Proposition~\ref{P:A-Lazar} enables the establishment of well-posedness for the Cauchy problem on sub-critical Sobolev spaces. However, to address the Cauchy problem on critical spaces, additional ideas are requisite.

The first estimation of the critical Sobolev $H^{3/2}$-norm was obtained in the work of C{\'o}rdoba and Lazar~(\cite{Cordoba-Lazar-H3/2}), who applied methods from harmonic analysis to establish the well-posedness of the Cauchy problem for certain initial data, where the slope can be arbitrarily large. Specifically, they proved the following result.

\begin{theorem}[C\'ordoba and Lazar] The Cauchy problem for the Muskat equation is well-posed globally in time for all 
initial data $f_0\in H^{\frac52}(\xR)$ such that 
$$
(1+\lA \partial_x f_0\rA_{L^\infty}^4)\lA f_0\rA_{\dot{H}^{\tdm}}\ll 1.
$$
\end{theorem}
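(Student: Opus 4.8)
The plan is to establish an \emph{a priori} estimate for the critical norm $\lA f(t)\rA_{\dot H^{3/2}}$ and then combine it with a suitable compactness/continuation argument. The starting point is the decomposition \eqref{newMuskat}, which writes the equation as $\partial_t f + \frac{1}{1+f_x^2}\D f + V(f)\partial_x f + R = 0$. I would differentiate once in $x$ so as to work at the level of $g := \partial_x f$, whose natural energy is $\lA g\rA_{\dot H^{1/2}}^2 = \int g \D g\,\dx$; this is the quantity whose time evolution must be controlled. Applying $\partial_x$ to \eqref{newMuskat} and pairing with $\D g$ (equivalently, testing the equation against $\la D_x\ra^{2}\partial_x f$), one gets a leading dissipative term $-\int \frac{1}{1+f_x^2}(\D g)^2\,\dx$, which since $\rho_2>\rho_1$ is coercive and, crucially, controls the $\dot H^{1/2}$-norm of $\D g$, i.e.\ the parabolic gain of half a derivative at the critical level. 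All other terms must be shown to be lower order relative to this coercive term, and here the assumption $(1+\lA f_{0,x}\rA_{L^\infty}^4)\lA f_0\rA_{\dot H^{3/2}}\ll 1$ enters: it makes the lower-order contributions absorbable by the coercive term, closing the estimate as long as the smallness is propagated.

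The key steps, in order, are: (i) rewrite the equation in the good form \eqref{newMuskat} and record that $V$ and $R$ are given by the explicit finite-difference integrals above, with $V$ controlled in terms of $\lA f_x\rA_{L^\infty}$ and one derivative of $f_x$, and $R$ controlled as a remainder of order $<1$ (this is in the spirit of Proposition~\ref{P:A-Lazar} and Proposition~\ref{P:continuity}, whose finite-difference estimates are the main analytic input); (ii) propagate the pointwise slope bound, showing $\lA f_x(t)\rA_{L^\infty}$ stays comparable to its initial value on the time interval where the critical norm is small --- this is where the modulus-of-continuity / maximum-principle philosophy (Cameron, Abedin--Schwab) and the structure of the equation are used; (iii) perform the energy estimate on $\lA \partial_x f\rA_{\dot H^{1/2}}$ described above, carefully splitting the nonlinearity: the transport term $V\partial_x g$ is handled by a commutator/symmetrization since $\partial_x V$ is controlled, the variable-coefficient dissipative term requires a paralinearization so that $\frac{1}{1+f_x^2}$ can be frozen (the Schauder-type idea recalled in the paper, $\cnx(A(x_0)\nabla v)=\cnx((A(x_0)-A(x))\nabla v)$), and the commutator $[\frac{1}{1+f_x^2},\D]$ plus the remainder $\partial_x R$ are shown to be genuinely lower order, gaining from the smallness of $\lA f\rA_{\dot H^{3/2}}$ and from the half-derivative the coercive term provides; (iv) close a Gronwall/bootstrap inequality of the form $\fract E(t) + c E'(t) \le (\text{small})\,E'(t) + C(\lA f_x\rA_{L^\infty})E(t)E'(t)$ where $E$ is the critical energy and $E'$ the dissipative functional, yielding global control of the critical norm; (v) upgrade the \emph{a priori} estimate to actual well-posedness by a standard approximation scheme (smooth mollified data, uniform bounds, passage to the limit, uniqueness via a contraction estimate on the difference of two solutions in a lower norm, using the Lipschitz bounds of Proposition~\ref{P:continuity}).

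I expect the main obstacle to be step (iii): at the critical regularity $\dot H^{3/2}$ there is no room to spare, so every term in the energy estimate must be placed \emph{exactly} at its scaling-critical level, with the parabolic smoothing (the half-derivative gain from the coercive term) doing double duty --- once to control the transport term and once to control the variable-coefficient corrections and the remainder. In particular the variable-coefficient dissipative term $\int(\frac{1}{1+f_x^2}-\frac{1}{1+f_x(x_0)^2})\D g\cdot\D g\,\dx$ is delicate: the difference of coefficients is only as regular as $f_x$, i.e.\ critically $\dot H^{1/2}\hookrightarrow\BMO$-type regularity, so one needs a genuine Coifman--Rochberg--Weiss / paracommutator estimate (as already used in the proof of the Alazard--Nguyen trace inequality) rather than a crude $L^\infty$ bound, and the constant there must be small, forcing the fourth-power dependence $\lA f_x\rA_{L^\infty}^4$ in the hypothesis. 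A secondary difficulty is propagating the $L^\infty$ slope bound in step (ii) simultaneously with the critical energy estimate, since the two are coupled; I would organize this as a single continuity argument on the combined quantity $(1+\lA f_x\rA_{L^\infty}^4)\lA f\rA_{\dot H^{3/2}}$, exactly the quantity appearing in the statement.
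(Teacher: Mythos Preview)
Your high-level architecture is exactly what the paper describes: establish the critical \emph{a priori} inequality
\[
\fract \lA f\rA_{\dot H^{3/2}}^2
+ \int_{\xR}\frac{(\partial_{xx}f)^2}{1+(\partial_x f)^2}\dx
\lesssim \bigl(\lA f\rA_{\dot H^{3/2}}+\lA f\rA_{\dot H^{3/2}}^2\bigr)\lA f\rA_{\dot H^2}^2,
\]
then combine it with a slope bound to absorb the right-hand side. However, the technical routes you choose for the two building blocks are \emph{different} from those the paper attributes to C\'ordoba--Lazar.

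First, for the critical estimate you propose to start from the decomposition~\eqref{newMuskat} and close via paralinearization and commutators; but~\eqref{newMuskat} is the Alazard--Lazar decomposition, introduced \emph{after} C\'ordoba--Lazar and, as the paper says just before Proposition~\ref{P:A-Lazar}, tailored to sub-critical well-posedness (the remainder bound carries an $\eps>0$). The paper is explicit that C\'ordoba--Lazar's proof relies instead on ``a clever reformulation of the equation in terms of oscillatory integrals and the systematic application of Besov spaces techniques'', precisely because $H^{1/2}$ is not an algebra. So your Coifman--Rochberg--Weiss/paracommutator plan for the variable-coefficient term is a genuinely different mechanism, and your account of where the factor $\lA f_x\rA_{L^\infty}^4$ comes from does not match the paper's narrative.

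Second, the slope propagation is simpler than you suggest: the paper invokes the C\'ordoba--Gancedo integration-by-parts argument yielding $\fract\lA \partial_x f\rA_{L^\infty}\lesssim \lA f\rA_{H^2}^2$, not the Cameron or Abedin--Schwab modulus-of-continuity machinery. This differential inequality is then combined with the critical estimate above (using the trivial lower bound on the weight $1/(1+f_x^2)$) to obtain $\fract\lA f\rA_{\dot H^{3/2}}^2 + c_0\int (\partial_{xx}f)^2\dx\le 0$ under the stated smallness, which is how the global bound closes.

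In summary: the global strategy is right, but both the derivation of the critical \emph{a priori} estimate and the slope control go through different tools than the ones you outline; what you sketch is closer in spirit to the later Alazard--Lazar/Alazard--Nguyen approach than to C\'ordoba--Lazar's original argument.
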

Their main technical result is a key {\em a priori} estimate 
in the critical Sobolev space, which reads:
$$
{\fract \lA f\rA_{\dot H^{\frac{3}{2}}}^2
+ 
{\int_\xR\frac{( \partial_{xx}f)^2}{1+(\partial_x f)^2}\dx}}
\lesssim  \left(\lA f\rA_{\dot H^{\frac{3}{2}}}
+\lA f\rA_{\dot H^{\frac{3}{2}}}^2\right)
\lA f\rA_{\dot{H}^2}^2.
$$
While the statement of this estimate is straightforward, its proof is very intricate due to the non-Banach algebra structure of $H^\frac{1}{2}(\mathbb{R})$. C{\'o}rdoba and Lazar addressed this challenge in their proof, which relies on a clever reformulation of the equation in terms of oscillatory integrals and the systematic application of Besov spaces techniques. Further insights into this approach can be found in the works of Gancedo-Lazar~\cite{Gancedo-Lazar-H2} and Granero-Bellinch\'on and Scrobogna~\cite{GBS-2020}.

The question then arises of how to derive a global existence result from their inequality. The underlying idea is quite straightforward: the objective is to absorb the right-hand side with the left-hand side. This necessitates controlling the denominator $1+(\partial_xf)^2$ from below. To achieve this, an integration by parts argument is employed, tracing back to the work of C\'ordoba and Gancedo~\cite{CG-CMP2} 
(see also Cameron~\cite{Cameron} 
and 
Gancedo-Lazar~\cite{Gancedo-Lazar-H2}, among others). 
This yields
\[
\fract \lVert \partial_x f \rVert_{L^\infty} \lesssim \lVert f \rVert_{H^2}^2.
\]
By combining these two inequalities, it follows that there exists $c_0>0$ small enough, so that
\[
\fract \lVert f \rVert_{\dot H^{\frac{3}{2}}}^2 + c_0\int_{\mathbb{R}}  (\partial_{xx}f)^2 \,dx \le 0,
\]
thereby estimating the $L^\infty_t(H^{3/2}_x)$-norm.

We proceed to discuss two improvements: global existence for small data, without the need for initial data to be smooth, and secondly, local well-posedness for large data in the critical space. These improvements are based on two distinct techniques.

\subsection{A null-type property}

In \cite{AN3,MR4387237}, the first 
idea is to prove a (weak) null-type property. 
By this we mean the proof of an inequality similar to C{\'o}rdoba-Lazar 
but with the 
key difference that the right-hand side involves the 
weak coercive quantity given by the left-hand side.

\begin{theorem}[\cite{AN3}] 
The Cauchy problem is well-posed, globally in time, for all initial data {$f_0\in H^{\frac32}(\xR)$} such that ${\lA f_0\rA_{\dot{H}^{\tdm}}\ll 1.}$
\end{theorem}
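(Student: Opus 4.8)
The plan is to prove the global well-posedness in $H^{3/2}(\xR)$ under the smallness assumption $\lA f_0\rA_{\dot H^{3/2}}\ll 1$ by establishing an \emph{a priori} estimate of the form
\begin{equation*}
\fract \lA f\rA_{\dot H^{3/2}}^2 + c_1 \lA f\rA_{\dot H^2}^2 \le C\bigl(\lA f\rA_{\dot H^{3/2}}\bigr)\, \lA f\rA_{\dot H^{3/2}}\,\lA f\rA_{\dot H^2}^2,
\end{equation*}
which, as soon as $\lA f\rA_{\dot H^{3/2}}$ is small enough that the right-hand side is absorbed by the dissipative term $c_1\lA f\rA_{\dot H^2}^2$, yields both the propagation of the smallness of the critical norm and an $L^2_t\dot H^2_x$ bound, hence global existence and uniqueness. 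The crucial feature — the ``weak null-type property'' announced in the statement — is that the \emph{extra} factor multiplying $\lA f\rA_{\dot H^2}^2$ on the right is the small critical norm $\lA f\rA_{\dot H^{3/2}}$ itself, not a higher, non-coercive norm such as $\lA f\rA_{\dot H^2}$ (as in the C\'ordoba--Lazar estimate). This is what allows one to close the argument at the endpoint regularity $H^{3/2}$ without ever needing $f_0$ to be smooth: one only needs to run a standard parabolic approximation/continuation scheme (regularize the data, solve, pass to the limit using the uniform bounds) rather than a bootstrap starting from $H^{5/2}$.

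First I would set up the functional framework: work with the C\'ordoba--Gancedo formulation~\eqref{Muskat}, equivalently $\partial_t f + \D f = \mathcal{T}(f)f$, and use throughout the finite-difference characterization~\eqref{Gagliardo} of the homogeneous Sobolev norms together with the Lipschitz bound of Proposition~\ref{P:continuity} on $\mathcal{T}$. Then I would differentiate the equation appropriately — since $\dot H^{3/2}$ sits between $\dot H^1$ and $\dot H^2$, the natural quantity is $\partial_x f$ measured in $\dot H^{1/2}$, so I would apply $\D^{1/2}\partial_x$ (or work directly with $\iint |\Delta_\alpha\partial_x f(x)-\Delta_\alpha\partial_x f(y)|^2\,\cdots$) and compute $\fract \lA f\rA_{\dot H^{3/2}}^2$. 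The linear part contributes $-\lA f\rA_{\dot H^2}^2$ (the good dissipative term), and the whole game is to estimate the contribution of $\mathcal{T}(f)f$. Here I would split $\mathcal{T}(f)f$ following the odd/even decomposition from~\cite{A-Lazar} recalled in Proposition~\ref{P:A-Lazar}: the paraproduct piece $\tilde T_{f_x^2/(1+f_x^2)}\la D_x\ra f$ combined with $\D f$ reconstructs the quasilinear elliptic operator $\frac{1}{1+f_x^2}\D f$, which after an integration by parts produces a \emph{good} sign term $\int (\partial_{xx}f)^2/(1+f_x^2)\,dx$ — this is precisely the mechanism that lets one keep a coercive quantity — while the transport piece $\tilde T_{V}\partial_x f$ is antisymmetric to leading order and the remainder $R_\eps$ is genuinely lower order; the careful bookkeeping is to show every commutator/remainder term is bounded by (small critical norm)$\times \lA f\rA_{\dot H^2}^2$.

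The main obstacle is exactly this last bookkeeping at the critical regularity: $H^{1/2}(\xR)$ is not a Banach algebra, so products like $f_x^2$ and quotients $1/(1+f_x^2)$ must be handled with care, and one cannot afford to lose any derivative — every nonlinear term must be estimated so that the power of $\lA f\rA_{\dot H^2}$ is \emph{exactly two} and the remaining factor is a nonnegative power of the critical norm only. I would control the commutator between $\D^{1/2}\partial_x$ and the paraproduct $\tilde T_{a}$ with $a=f_x^2/(1+f_x^2)$ using that $a$ inherits $\dot H^{1/2}$-regularity from $f\in\dot H^{3/2}$ (via the algebra-type estimate $\lA a\rA_{\dot H^{1/2}}\lesssim \lA f_x\rA_{L^\infty}\lA f_x\rA_{\dot H^{1/2}}$, which costs no derivative), and I would handle the transport term by symmetrizing and estimating $\cnx V = \partial_x V$ in the appropriate space; the point is that $V$ defined by $V=-\frac1\pi\int \mathcal{O}(\alpha,\cdot)/\alpha\,d\alpha$ has a half-derivative gain built in because $\mathcal{O}$ is the odd part. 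Finally, once the differential inequality is in hand, a short argument (choose $\eps_0$ so that $C(\eps_0)\eps_0 \le c_1$, then a continuity/bootstrap argument on the set of times where $\lA f(t)\rA_{\dot H^{3/2}}\le \eps_0$ shows it is all of $[0,\infty)$) closes global existence; uniqueness and continuity of the flow follow from Proposition~\ref{P:continuity}\eqref{Prop:low3}--\eqref{Prop:low2} by a standard contraction/stability estimate for the difference of two solutions in $L^2$, upgraded to $\dot H^{3/2}$ by interpolation with the uniform bounds. I expect the delicate commutator estimate producing the ``null-type'' gain to be where essentially all the work lies; everything else is soft.
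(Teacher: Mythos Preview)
Your overall architecture is right, but there is a genuine gap at the critical regularity that your proposal does not close. The target estimate you write,
\[
\fract \lA f\rA_{\dot H^{3/2}}^2 + c_1 \lA f\rA_{\dot H^2}^2 \le C\bigl(\lA f\rA_{\dot H^{3/2}}\bigr)\, \lA f\rA_{\dot H^{3/2}}\,\lA f\rA_{\dot H^2}^2,
\]
is too strong: the quasilinear principal part $\frac{1}{1+f_x^2}\D f$ only produces the \emph{weighted} dissipation $\int (\partial_{xx}f)^2/(1+f_x^2)\,\dx$, and since $f\in H^{3/2}$ gives only $f_x\in H^{1/2}$, which does \emph{not} embed in $L^\infty$, you have no lower bound on $1/(1+f_x^2)$ and hence no way to recover the full $\lA f\rA_{\dot H^2}^2$ from the weighted quantity. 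For the same reason your commutator step fails: the algebra-type estimate $\lA a\rA_{\dot H^{1/2}}\lesssim \lA f_x\rA_{L^\infty}\lA f_x\rA_{\dot H^{1/2}}$ invokes a norm you do not control. And Proposition~\ref{P:A-Lazar} as stated is strictly subcritical (it requires $\eps>0$ and its remainder is estimated in terms of $\lA f\rA_{H^{2+\eps/2}}$), so you cannot invoke it directly at $\eps=0$.

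The paper's estimate is
\[
\fract \lA f\rA_{\dot H^{3/2}}^2 + \int_\xR \frac{(\partial_{xx}f)^2}{1+(\partial_x f)^2}\,\dx
\lesssim \bigl(1+\lA f\rA_{\dot H^{3/2}}^{7}\bigr)\,\lA f\rA_{\dot H^{3/2}}\int_\xR \frac{(\partial_{xx}f)^2}{1+(\partial_x f)^2}\,\dx,
\]
i.e.\ the \emph{same weak coercive quantity appears on both sides}. That is the content of the ``null-type property'': every nonlinear error term must be estimated not by $\lA f\rA_{\dot H^2}^2$ but by the weighted integral, uniformly in $\lA f_x\rA_{L^\infty}$. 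Achieving this requires reorganizing all the commutator and remainder estimates so that no $L^\infty$-norm of the slope ever appears---one has to systematically trade $\partial_{xx}f$ against $\partial_{xx}f/\sqrt{1+f_x^2}$ and absorb the extra $\sqrt{1+f_x^2}$ factors into the structure of the nonlinearity. This is the hard technical core of~\cite{AN3}, and it is precisely what your proposal leaves unaddressed. Once you replace your target inequality by the weighted one and commit to proving all bounds in terms of the weighted dissipation, the bootstrap and continuation argument you sketch at the end goes through exactly as you describe.
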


The proof relies on a 
null-type property, enabling to prove that
$$
\fract \lA f\rA_{\dot H^{\frac{3}{2}}}^2
+ {
{{\int_\xR \frac{( \partial_{xx}f)^2}{1+(\partial_x f)^2} \dx}}}
\lesssim
\left(1+\Vert f\Vert_{\dot H^{\frac{3}{2}}}^{7}\right)
\Vert f\Vert_{\dot H^{\frac{3}{2}}}
{{\int_\xR \frac{( \partial_{xx}f)^2}{1+(\partial_x f)^2} \dx}.}
$$
This estimate implies at once a global well-posedness result for small data. 
Indeed, this part is perturbative in character, and one can absorb the right-hand side by the left-hand side, 
provided that $\Vert f\Vert_{\dot H^{\frac{3}{2}}}$ is small enough: 
if $\Vert f\Vert_{\dot H^{\frac{3}{2}}}$ is small enough then
$$
\fract \lA f\rA_{\dot H^{\frac{3}{2}}}^2
+c_0\int_\xR \frac{( \partial_{xx}f)^2}{1+(\partial_x f)^2} \dx\le 0.
$$

\subsection{Paralinearization and enhanced regularity}

The main result in \cite{AN3} is

\begin{theorem}[\cite{AN3}]
The Cauchy problem is locally well-posed for any initial data in the critical space $H^{\frac32}(\xR)$.
\end{theorem}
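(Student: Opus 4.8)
The plan is to establish local well-posedness in the critical space $H^{3/2}(\xR)$ by combining the paralinearization of Proposition~\ref{P:A-Lazar} (in the Shnirelman-type form) with a commutator/energy scheme that exploits the \emph{enhanced regularity} generated by the parabolic term. Starting from the decomposition~\eqref{newMuskat}, $\partial_t f+\tfrac{1}{1+f_x^2}\D f+V(f)\partial_x f+R=0$, I would first set up the functional framework: the solution space should be $C^0([0,T];H^{3/2})\cap L^2(0,T;H^2)$, the parabolic smoothing being exactly what one expects for an equation whose principal part behaves like $\partial_t+\gamma\D$ with $\gamma=(1+f_x^2)^{-1}$ bounded above and below along the flow. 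The first step is thus an \emph{a priori} estimate: differentiate the equation, test against the natural multiplier, and control the convective term $V\partial_x f$ and the remainder $R$ using the Lipschitz bounds from Proposition~\ref{P:continuity} together with the symbolic structure of $\tilde T$. The key inequality to extract is a closed estimate of the form $\tfrac{d}{dt}\|f\|_{\dot H^{3/2}}^2+c_0\int (\partial_{xx}f)^2/(1+f_x^2)\,dx\le \mathcal F(\|f\|_{\dot H^{3/2}})\,\big(\|f\|_{\dot H^{3/2}}+\text{dissipation}\big)$, so that the parabolic gain absorbs the loss of one derivative coming from the quasilinear terms — this is the ``enhanced regularity'' mechanism, and it is precisely the point where working at the critical exponent is delicate because $H^{1/2}$ is not an algebra.

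Second, I would run a parabolic regularization / fixed-point or compactness argument to produce solutions: smooth the data (or add $\epsilon\partial_{xx}$ to the equation), obtain solutions at higher regularity by the sub-critical theory, derive uniform-in-$\epsilon$ bounds from the \emph{a priori} estimate above, and pass to the limit using the $L^2_tH^2_x$ control for strong compactness in space together with the equation to get compactness in time (Aubin–Lions). The construction must be complemented by a \emph{contraction estimate}: for two solutions $f_1,f_2$, the difference $f_1-f_2$ solves a linear equation whose coefficients depend on $f_1,f_2$, and one estimates $\|f_1-f_2\|_{L^\infty_t H^{1/2}}+\|f_1-f_2\|_{L^2_tH^1}$ — one derivative \emph{below} the solution regularity, which is the standard trick for quasilinear parabolic problems and which here again leans on the Lipschitz bounds of Proposition~\ref{P:continuity}(\ref{Prop:low3}) and on paraproduct estimates for $\tilde T$. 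Uniqueness and continuous dependence follow from this difference estimate; continuity in time at the critical level (as opposed to merely weak continuity) requires the usual Bona–Smith-type argument, regularizing the data and using that the $\dot H^{3/2}$ energy is almost conserved in the high frequencies.

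The main obstacle, as the text emphasizes, is the borderline nature of $H^{3/2}$: the coefficient $\gamma=(1+f_x^2)^{-1}$ and the transport coefficient $V$ are only in $H^{1/2}\hookrightarrow$ (barely) $L^\infty$, so one cannot afford crude product estimates, and the standard paralinearization remainder in Proposition~\ref{P:A-Lazar} is controlled in terms of $\|f\|_{H^{2+\epsilon/2}}$, i.e.\ strictly \emph{above} the dissipation level $H^2$ — so that remainder must be re-examined and re-estimated at exactly the $L^2_tH^2_x$ threshold, presumably by refining the odd/even splitting of the kernel and tracking cancellations in the finite-difference expression $\int (\partial_x\Delta_\alpha f)\,(\Delta_\alpha f)^2/(1+(\Delta_\alpha f)^2)\,d\alpha$ rather than going through $\tilde T$. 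I expect the heart of the proof to be a sharp bilinear (really trilinear) commutator estimate showing that $\big[\Lambda^{3/2},\gamma\big]\D f$ and the transport commutator $\big[\Lambda^{3/2},V\big]\partial_x f$ are each bounded in $L^2_x$ by (something like) $\|\D^2 f\|_{L^2}^{1/2}$ times lower-order critical norms — i.e.\ that all the ``bad'' terms are quadratic in the square root of the dissipation, which is exactly what makes the energy inequality close and, by Gr\"onwall, gives a positive existence time depending only on $\|f_0\|_{H^{3/2}}$.
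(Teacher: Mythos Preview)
Your plan captures the standard architecture of a quasilinear parabolic well-posedness proof, but the core \emph{a priori} estimate you describe does not close for large data. The inequality you hope to derive --- bad terms ``quadratic in the square root of the dissipation'' so that the energy closes by Gr\"onwall with a lifespan depending only on $\|f_0\|_{H^{3/2}}$ --- is essentially the null-type inequality displayed just above,
\[
\fract\|f\|_{\dot H^{3/2}}^2 + \int_\xR \frac{(\partial_{xx}f)^2}{1+(\partial_xf)^2}\,\dx \lesssim \big(1+\|f\|_{\dot H^{3/2}}^7\big)\,\|f\|_{\dot H^{3/2}}\int_\xR \frac{(\partial_{xx}f)^2}{1+(\partial_xf)^2}\,\dx.
\]
The right-hand side scales \emph{exactly} like the dissipation (this is what ``critical'' means), so it can be absorbed only when $\|f\|_{\dot H^{3/2}}$ is small --- that is the content of the preceding theorem, not of this one. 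For large data there is no Gr\"onwall mechanism here, and for a scaling-critical problem one should not expect the lifespan to be a function of $\|f_0\|_{H^{3/2}}$ alone. Your hoped-for commutator bound $[\Lambda^{3/2},\gamma]\D f=O(\|f\|_{\dot H^2}^{1/2}\times\text{critical})$ would be strictly \emph{subcritical} and cannot hold in general.

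The paper's proof sidesteps this obstruction by \emph{not} estimating the $H^{3/2}$-norm at all. The key observation is that any fixed $f_0\in H^{3/2}(\xR)$ enjoys an enhanced Fourier decay: there is a weight $\kappa_0$ with $\kappa_0(|\xi|)\to\infty$ as $|\xi|\to\infty$ such that $\int_\xR(1+|\xi|)^3\kappa_0(|\xi|)^2|\widehat{f_0}(\xi)|^2\,\dxi<\infty$. One then propagates the data-dependent norm $\|f\|_{\mathcal H^{3/2,\kappa_0}}$, commuting $\D\,\kappa_0(D_x)$ with equation~\eqref{newMuskat} and pairing with $\D^{2}\kappa_0(D_x)f$. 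The unbounded growth of $\kappa_0$ at high frequency provides precisely the slack needed to close the estimate for arbitrarily large data, at the price that the lifespan depends on $f_0$ through $\kappa_0$ and not merely on its norm. This weighted-space idea is the main novelty of~\cite{AN3} and is entirely absent from your proposal.
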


To prove this result, we do not estimate the $H^\tdm(\xR)$ norm, but a quantity which depends on the initial data. 
The idea is to exploit an enhanced decay of the Fourier transform. Specifically, 
we use the fact that, 
for all $f_0\in H^\tdm(\xR)$ there is a weight $\kappa_0$ such that
\begin{align*}
&{\lA f_0\rA_{\mathcal{H}^{\tdm,\kappa_0}}^2\defn\int_{\xR} (1+\la\xi\ra)^3 \kappa_0(|\xi|)^2 \big\vert\hat{f_0}(\xi)\big\vert^2\dxi <+\infty}
\quad\text{and}\\
&
{\lim_{|\xi|\to+\infty}\kappa_0(|\xi|)=+\infty}.
\end{align*}
We then estimate the $L^\infty_t(\mathcal{H}^{\tdm,\kappa_0})$-norm of $f$ using the previous decomposition~\eqref{newMuskat} of the nonlinearity, by 
\begin{itemize}
\item commuting $\D \kappa_0(D_x)$ with the equation \eqref{newMuskat}
\item taking the $L^2_x$ scalar product with 
$\D^{2}\kappa_0(D_x)f$ and integrating in time.
\end{itemize}
The proof is very technical and we refer to the introduction of the 
original article \cite{AN3} for more explanations.

\end{document}